\newtheorem{proposition}{Proposition}[section]
\newtheorem{lemma}[proposition]{Lemma}
\newtheorem{corollary}[proposition]{Corollary}
\newtheorem{theorem}[proposition]{Theorem}
\newtheorem{remark}[proposition]{Remark}
\theoremstyle{definition}
\newcommand{\selabel}[1]{\label{se:#1}}
\newcommand{\eqlabel}[1]{\label{eq:#1}}
\newcommand{\equref}[1]{(\ref{eq:#1})}
\def\<{\leqslant}
\def\>{\geqslant}
\def\a{\alpha}
\def\b{\beta}
\def\g{\gamma}
\def\O{\Omega}
\def\ol{\overline}
\def\t{\triangle}
\def\e{\varepsilon}
\def\l{\lambda}
\def\ot{\otimes}
\def\ra{\rightarrow}
\date{}
\begin{document}
\title{Representations of Drinfeld Doubles of Radford Hopf algebras}
\author{Hua Sun}
\address{School of Mathematical Science, Yangzhou University,
	Yangzhou 225002, China}
\email{huasun@yzu.edu.cn}
\author{Hui-Xiang Chen}
\address{School of Mathematical Science, Yangzhou University,
	Yangzhou 225002, China}
\email{hxchen@yzu.edu.cn}
\thanks{2010 {\it Mathematics Subject Classification}. 16T99, 16E99, 16G70}
\keywords{Drinfeld double, Radford algebra, representation, indecomposable module, Auslander-Reiten sequence}
\begin{abstract}
In this article, we investigate the representations of the Drinfeld doubles $D(R_{mn}(q))$ of the Radford Hopf algebras $R_{mn}(q)$ over an algebraically closed field $\Bbbk$, where $m>1$ and $n>1$ are integers
and $q\in\Bbbk$ is a root of unity of order $n$. Under the assumption ${\rm char}(\Bbbk)\nmid mn$, all the finite dimensional indecomposable modules over $D(R_{mn}(q))$ are displayed and classified up to isomorphism. The Auslander-Reiten sequences in the category of finite dimensional $D(R_{mn}(q))$-modules are also all displayed. It is shown that $D(R_{mn}(q))$ is of tame representation type.
\end{abstract}
\maketitle

\section{Introduction}\selabel{1}

Let $m, n\>2$ be integers and $q$ a primitive $n$-th root of unity in a field $\Bbbk$.
Radford \cite{Rad1975} constructed an $mn^2$-dimensional Hopf algebra such that its Jacobson radical is not a Hopf ideal. The Hopf algebra is denoted by $R_{mn}(q)$ and called a Radford Hopf algebra here. Note that the coradical of the dual Hopf algebra $R_{mn}(q)^*$ is not a Hopf subalgebra. When $m=1$, $R_{mn}(q)$ is still well-defined, which is exactly the Taft Hopf algebra $A_n(q)$ of dimension $n^2$ given in \cite{Ta}. Krop and Radford \cite{KropRad} defined the rank as a measure of complexity for Hopf algebras and classified all finite dimensional pointed Hopf algebras of rank one over an algebraically closed field of characteristic zero by using group datums. Scherotzke \cite{Sche} classified such Hopf algebras for the case of char$(\Bbbk)=p>0$. It was proven in \cite{KropRad, Sche} that a finite dimensional pointed Hopf algebra of rank one over an algebraically closed field is isomorphic to a quotient of a Hopf-Ore extension of a group algebra (its coradical). For the Hopf-Ore extension, one can see \cite{Panov}. The Taft algebra $A_n(q)$ and the Radford algebra $R_{mn}(q)$ are both pointed Hopf algebras of rank one.
The finite dimensional pointed rank one Hopf algebras are clarified into two types: nilpotent type and non-nilpotent type \cite{KropRad}. The Taft algebra $A_n(q)$ is of nilpotent type, but the Radford algebra $R_{mn}(q)$ is of non-nilpotent type.

The representations of finite dimensional pointed Hopf algebras of rank one was well studied. For instance, Cibils \cite{Cib} studied the representation theory of (generalized) Taft algebras and described the decomposition rules of tensor product modules over (generalized) Taft algebras. In \cite{ChVOZh} and \cite{LiZhang}, the authors studied respectively the Green rings of the Taft algebras and the generalized Taft algebras. Wang, Li and Zhang studied the representations and Green rings of finite dimensional pointed rank one Hopf algebras of nilpotent type and non-nilpotent type over an algebraically closed field of characteristic zero in \cite{WangLiZhang2014, WangLiZhang2016}, respectively. The representations of Drinfeld doubles of finite dimensional pointed Hopf algebras of rank one also got many achievements. The second author of this paper described the structures of Drinfeld doubles of Taft algebras and classified the finite dimensional indecomposable modules over such Drinfeld doubles in \cite{Ch1, Ch2, Ch3, Ch4}. He with his colleagues studied the Grothendieck rings and Green rings of Drinfeld doubles of Taft algebras, see \cite{Ch5, ChHasSun, ChHasLinSun, SunHasLinCh, ZWLC}.
When $n=2$, the Taft algebra $A_2(-1)$ is the same as $H_4$, the Sweedler's 4-dimensional Hopf algebra (see \cite{Sw, Ta}). The Green ring of Drinfeld double of $H_4$ was also studied by Li and Hu in \cite{LiHu}. Krop and Radford studied the representations of Drinfeld double $D(H)$ of any finite dimensional pointed rank one Hopf algebra $H$ over an algebraically closed field of characteristic zero, and classified the simple modules and projective indecomposable modules over $D(H)$ in \cite{KropRad}, when the group $G(H)$ of group-like elements in $H$ is abelian. However, the other finite dimensional indecomposable modules over $D(H)$ are not classified.

In this article, we study the representations of the Drinfeld doubles $D(R_{mn}(q))$
of Radford Hopf algebras $R_{mn}(q)$ over an algebraically closed field $\Bbbk$ with ${\rm char}(\Bbbk)\nmid mn$. The article is organized as follows. In Section \ref{s2},
we recall the definition of a group datum, the structures of Radford Hopf algebras $R_{mn}(q)$ and their Drinfeld doubles $D(R_{mn}(q))$, denoted by $H_{mn}(\xi)$, where $\xi\in\Bbbk$ is a primitive $mn$-th root of unity with $\xi^m=q$.
In Section \ref{s3}, we recall the simple modules and projective indecomposable modules over $H_{mn}(\xi)$, which shows that the Loewy length of $H_{mn}(\xi)$ is 3. In particular, we reconstruct the non-simple projective indecomposable $H_{mn}(\xi)$-modules and give a standard $\Bbbk$-basis for each such module. In Section \ref{s4}, we study the finite dimensional indecomposable $H_{mn}(\xi)$-modules of Loewy length 2. Using Auslander-Reiten theory, we display all the finite dimensional indecomposable $H_{mn}(\xi)$-modules of Loewy length 2, and classify them up to isomorphism. All the Auslander-Reiten sequences in the category of finite dimensional $H_{mn}(\xi)$-modules are also displayed. It is shown that $H_{mn}(\xi)$ is of tame representation type.

Throughout, let $\Bbbk$ be an algebraically closed field and $\Bbbk^{\times}=\Bbbk\backslash\{0\}$. Unless
otherwise stated, all algebras and Hopf algebras are
defined over $\Bbbk$; all modules are finite dimensional and left modules;
dim and $\otimes$ denote ${\rm dim}_{\Bbbk}$ and $\otimes_{\Bbbk}$,
respectively. Let $\mathbb Z$ denote the set of all integers, ${\mathbb Z}_n:={\mathbb Z}/n{\mathbb Z}$
for an integer $n$, and let $\mathbb{N}$ denote all non-negative integers.
The references \cite{Ka, Mon, Sw} are basic references for the theory of Hopf algebras and quantum groups. The readers can refer \cite{ARS} for the representation theory of algebras.

\section{Drinfeld doubles of Radford Hopf algebras}\label{s2}

In this section, we recall the Radford Hopf algebras and  their Drinfeld doubles.

A quadruple $\mathcal{D}=(G, \chi, g, \mu)$ is called a {\it group datum} if $G$ is a finite group, $\chi$ is a $\Bbbk$-valued character of $G$, $g$ is a central element of $G$ and $\mu\in\Bbbk$ subject to $\chi^n=1$ or $\mu(g^n-1)=0$, where $n$ is the order of $\chi(g)$. The group datum $\mathcal{D}$ is said to be of {\it nilpotent type} if $\mu(g^n-1)=0$, and it is of {\it non-nilpotent type} if $\mu(g^n-1)\neq 0$ and $\chi^n=1$. For any group datum $\mathcal{D}=(G, \chi, g, \mu)$, Krop and Radford constructed an associated finite dimensional pointed rank one Hopf algebra $H_{\mathcal{D}}$ and classified such Hopf algebras. They also described the Drinfeld doubles $D(H_{\mathcal D})$ of $H_{\mathcal D}$, see \cite{KropRad}.

Let $n>1$ and $m>1$ be integers and let $q\in \Bbbk$ be a primitive $n$-th root of unity. Then the Radford Hopf algebra $R_{mn}(q)$ can be described by a group datum as follows.  Let $G$ be a cyclic group of order $mn$ with generator $g$ and let $\chi$ be the $\Bbbk$-valued character of $G$ defined by $\chi(g)=q$. Then $\mathcal{D}=(G, \chi, g, 1)$ is a group datum of non-nilpotent type, and the associated Hopf algebra $H_{\mathcal D}$ is exactly isomorphic to $R_{mn}(q)$. Regarding $R_{mn}(q)=H_{\mathcal D}$ from now on. Then $R_{mn}(q)$ is generated as an algebra by $g$ and $x$ subject to the relations:
$$g^{mn}=1,\quad x^n=g^n-1,\quad xg=qgx.$$
The comultiplication $\t$, counit $\e$ and  antipode $S$ are given by
$$\begin{array}{lll}
\t(x)=x\otimes g+1\otimes x, & \e(x)=0, & S(x)=-xg^{-1},\\
 \t(g)=g\otimes g, & \e(g)=1, &  S(g)=g^{-1}=g^{mn-1}.
\end{array}$$
$R_{mn}(q)$ has a $\Bbbk$-basis $\{g^ix^j|i\in\mathbb{Z}_{mn}, 0\<j\<n-1\}$. For the details, one can refer to \cite{KropRad, Rad1975}.

Let $\xi\in\Bbbk$ be a primitive $mn$-th root of unity with $q=\xi^m$. Then by \cite[Subsection 2.1]{KropRad}, the Drinfeld double $D(R_{mn}(q)))$ can be described as follows. $D(R_{mn}(q)))$ is generated as an algebra by $a, b, c$ and $d$ subject to the following relations:
$$\begin{array}{c}
a^n=b^n-1,\ b^{mn}=1, \ c^{mn}=1,\ d^n=0,\\
ab=qba,\ dc=\xi cd, \ bc=cb,\ bd=qdb, \ ad-da=b-c^m,\\
ac=\xi^{-1}ca+\frac{\xi^{-1}-\xi^{n-1}}{(n-1)!_q}(c^{m+1}-qbc)d^{n-1}.\\
\end{array}$$
The comultiplication, the counit and the antipode of $D(R_{mn}(q)))$ are given by
$$
\begin{array}{c}
\t(a)=a\otimes b+1\otimes a, \quad \t(b)=b\otimes b, \quad \t(d)=c^m\otimes d+d\otimes 1,\\
\t(c)=c\otimes c+(\xi^{n}-1)\sum_{k=1}^{n-1}\frac{1}{(k)!_q(n-k)!_q}c^{mk+1}d^{n-k}\otimes cd^k,\\
  \e(a)=0, \quad \e(b)=1, \quad \e(c)=1, \quad  \e(d)=0,\\
S(a)=-ab^{-1}, \quad S(b)=b^{-1}, \quad S(c)=c^{-1}, \quad S(d)=-c^{-m}d.\\
\end{array}$$

Throughout the following, assume that $\Bbbk$ contains a primitive $mn$-th root $\xi$ of unity with $q=\xi^m$. Then ${\rm char}(\Bbbk)\nmid mn$. Let $H_{mn}(\xi):=D(R_{mn}(q))$ be the Drinfeld double of $R_{mn}(q)$ as described above. Obviously, $H_{mn}(\xi)$ has a $\Bbbk$-basis $\{a^ib^jc^ld^k|j, l\in\mathbb{Z}_{mn},0\<i, k\<n-1\}$, and ${\rm dim}(H_{mn}(\xi))=m^2n^4$.
For simplicity, we denote the image of an integer $i$ under the canonical epimorphism $\mathbb{Z}\ra\mathbb{Z}_{mn}$ still by $i$.

\section{Simple modules and projective modules}\label{s3}

Throughout this and the next sections,
for any $H_{mn}(\xi)$-module $V$, let $P(V)$ and $I(V)$ denote the projective cover and the injective envelope of $V$, respectively. Let $l(V)$ and ${\rm rl}(V)$ denote the length and the radical length (Loewy length) of $V$, respectively. Moreover, let $sV$ denote the direct sum of $s$ copies of $V$ for any integer $s\>0$, where $sV=0$ when $s=0$. For any $H_{mn}(\xi)$-module $M$ and $x_1, \cdots x_s\in M$, let $\langle x_1, \cdots, x_s\rangle$ denote the submodule of $M$ generated by $\{x_1, \cdots, x_s\}$, and define $M^d:=\{v\in M|dv=0\}$.

\subsection{Simple modules}\label{s3.1}

Krop and Radford classified the simple modules over $D(H_{\mathcal D})$ for any group datum $\mathcal{D}=(G,\chi,g,\mu)$ with $G$ being abelian in \cite[Subsection 2.2]{KropRad}. Therefore, one can get the classification of simple modules over $H_{mn}(\xi)$ from \cite[Subsection 2.2]{KropRad}. In this subsection, we recall the simple $H_{mn}(\xi)$-modules.

For any $1\<l\<n-1$ and $i\in \mathbb{Z}$, let $\a_k(l,i)=(k)_q(q^i-q^{i+l-k})$, $1\<k\<l-1$. It is easy to see that $\a_k(l,i)\neq 0$ for all $1\<k\<l-1$. Let
$$I_0=\{(i,j)|i,j\in\mathbb{Z}_{mn}\text{ with }q^{i+k}\neq \xi^j \text{ for all } 0\<k\<n-2\}.$$
Then $\sharp I_0=(mn)^2-mn(n-1)$, where $\sharp I_0$ denotes the number of elements in $I_0$.

Let $U(H_{mn}(\xi))$ be the multiplicative group of the invertible elements of $H_{mn}(\xi)$. Then $b, c\in U(H_{mn}(\xi))$ by $b^{mn}=c^{mn}=1$. Let $G$ be the subgroup of $U(H_{mn}(\xi))$ generated by $b$ and $c$. Then $G=\{b^ic^j|0\<i,j\<mn-1\}$ and $G\cong \mathbb{Z}_{mn}\times\mathbb{Z}_{mn}$ as groups. Let $A$ be the subalgebra of $H_{mn}(\xi)$ generated by $b, c$ and $d$. Then the group algebra $\Bbbk G$ is a subalgebra of $A$ and the Jacobson radical $J(A)$ of $A$ is equal to the principal ideal $(d)$ generated by $d$. Moreover, $A=\Bbbk G\oplus(d)$ as vector spaces. Hence the quotient algebra $A/J(A)$ is exactly isomorphism to the group algebra $\Bbbk G$. Thus, an $A$-module $M$ is semisimple if and only if $dM=0$, and there is a 1-1 correspondence between the simple $A$-modules and the simple $\Bbbk G$-modules. This implies that the simple $A$-modules are in 1-1 correspondence with the $\Bbbk$-valued characters of $G$ since $G$ is abelian. For each character $\l$, let $\Bbbk_{\l}$ denote the corresponding one dimensional $A$-module, and let $Z(\l):=H_{mn}(\xi)\otimes_A\Bbbk_{\l}$ denote the induced $H_{mn}(\xi)$-module. Since $H_{mn}(\xi)$ is a free right $A$-module with an $A$-basis $\{a^i|0\<i\<n-1\}$, $Z(\l)$ is an $n$-dimensional $H_{mn}(\xi)$-module
with a $\Bbbk$-basis $\{a^i\ot_A1_{\l}|0\<i\<n-1\}$, where $1_{\l}$ is a nonzero element of $\Bbbk_{\l}$.

For any $i,j\in \mathbb{Z}$, there is a character $\l_{ij}$ of $G$ given by $\l_{ij}(b)=\xi^j$ and $\l_{ij}(c)=\xi^i$. Clearly, any character of $G$ has this form.
Moreover, for any $i,i',j,j'\in\mathbb{Z}$,
$$\l_{ij}=\l_{i'j'} \Leftrightarrow
i\equiv i'\ (\text{mod }mn) \text{ and } j\equiv j'\ (\text{mod }mn).$$

Let $i,j\in \mathbb{Z}$. Then by \cite[Proposition 6(a)]{KropRad}, $Z(\l_{ij})$ is simple if and only if $\l_{ij}(bc^{-m})=\xi^{j-mi}\notin \{1, q, q^2, \cdots, q^{n-2}\}$ if and only if $(i,j)\in I_0$. In this case, $\{a^k\ot_A1_{\l_{ij}}|0\<k\<n-1\}$ is a $\Bbbk$-basis of $Z(\l_{ij})$. Moreover, a straightforward computation shows that
$Z(\l_{ij})^d=\Bbbk(1\ot_A1_{\l_{ij}})$. Now suppose $(i,j)\notin I_0$. Then there is a unique integer $l$ with $1\<l\<n-1$ such that $\xi^{j-mi}=q^{l-1}$. In this case, $Z(\l_{ij})$ contains a unique proper submodule ${\rm rad}Z(\l_{ij})={\rm span}\{a^k\otimes_A1_{\l_{ij}}|l\<k\<n-1\}$ by \cite[Proposition 6(b)]{KropRad}. Denote the quotient module $Z(\l_{ij})/{\rm rad} Z(\l_{ij})$ by $M(l,i)$. Then $M(l,i)$ is simple. Define $m_k\in M(l,i)$ for $1\<k\<l$ by $m_1=1\ot_A1_{\l_{ij}}+{\rm rad}Z(\l_{ij})$ if $k=1$, and
$m_k=\frac{1}{\a_1\cdots\a_{k-1}}a^{k-1}\ot_A1_{\l_{ij}}+{\rm rad}Z(\l_{ij})$ if $2\<k\<l$, where $\a_k=\a_k(l,i)$ for all $1\<k\<l-1$. Then $\{m_1, m_2, \cdots, m_l\}$ is a $\Bbbk$-basis of $M(l,i)$, and hence ${\rm dim}M(l,i)=l$. A tedious but standard verification shows that the $H_{mn}(\xi)$-module action on $M(l,i)$ is determined by
\begin{equation*}
\begin{array}{ll}
\vspace{0.1cm}
am_k=\left\{
\begin{array}{ll}
\a_k(l,i)m_{k+1},& 1\<k<l-1,\\
0,& k=l,\\
\end{array}\right.&
dm_k=\left\{
\begin{array}{ll}
0, & k=1,\\
m_{k-1},& 1<k\<l,\\
\end{array}\right.\\
bm_k=q^{i+l-k}m_k,\ 1\<k\<l, &cm_k=\xi^{i+k-1}m_k, \ 1\<k\<l.\\
\end{array}
\end{equation*}
Such a basis is called a standard basis of $M(l,i)$. Clearly, $M(l,i)^d=\Bbbk m_1$.

By \cite[Subsection 2.2]{KropRad} and the discussion above, one gets the following lemmas and proposition.

\begin{lemma}\label{3.1}
Let $(i,j), (i',j')\in I_0$.
\begin{enumerate}
\item[(1)]  $Z(\l_{ij})$ is a simple module.
\item[(2)]  $Z(\l_{ij})\cong Z(\l_{i'j'})$ if and only if $(i,j)=(i',j')$.
\end{enumerate}
\end{lemma}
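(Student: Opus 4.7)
My plan is to handle the two parts separately, leaning on the data already assembled in the paragraphs just before the lemma.

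For part (1), the simplicity criterion has essentially been stated in the text: by \cite[Proposition 6(a)]{KropRad}, $Z(\lambda_{ij})$ is simple iff $\lambda_{ij}(bc^{-m})=\xi^{j-mi}\notin\{1,q,\dots,q^{n-2}\}$, and this is precisely the condition defining $I_0$. So part (1) is an immediate unfolding of the definition of $I_0$ together with the cited proposition, and there is no real work beyond writing this sentence.

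For part (2), the ``if'' direction is trivial because $\lambda_{ij}$ depends only on the class of $(i,j)$ in $\mathbb Z_{mn}\times\mathbb Z_{mn}$. The interesting direction is to recover $(i,j)$ from the isomorphism class of $Z(\lambda_{ij})$. The natural invariant to use is the $d$-kernel $Z(\lambda_{ij})^d$: it was observed just above the lemma that $Z(\lambda_{ij})^d=\Bbbk(1\otimes_A 1_{\lambda_{ij}})$ whenever $(i,j)\in I_0$. This is a one-dimensional subspace, and since $b,c$ commute with $d$, it is stable under the actions of $b$ and $c$. On the generator $1\otimes_A 1_{\lambda_{ij}}$ the elements $b$ and $c$ act by the scalars $\lambda_{ij}(b)=\xi^j$ and $\lambda_{ij}(c)=\xi^i$ respectively.

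Any $H_{mn}(\xi)$-module isomorphism $Z(\lambda_{ij})\cong Z(\lambda_{i'j'})$ restricts to an isomorphism of the $d$-kernels, hence of the corresponding one-dimensional $\langle b,c\rangle$-representations. Comparing the eigenvalues of $b$ and $c$ on the two sides yields $\xi^j=\xi^{j'}$ and $\xi^i=\xi^{i'}$; since $\xi$ is a primitive $mn$-th root of unity this forces $i\equiv i'$ and $j\equiv j'\pmod{mn}$, i.e.\ $(i,j)=(i',j')$ in $\mathbb Z_{mn}\times\mathbb Z_{mn}$, as required.

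I do not anticipate any obstacles here; the only mildly non-trivial input is the one-dimensionality of $Z(\lambda_{ij})^d$, but that has already been asserted in the setup preceding the lemma, so the proof really is a short two-paragraph argument.
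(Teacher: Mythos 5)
Your argument is correct and matches what the paper intends: the paper gives no written proof of this lemma, simply citing \cite[Subsection 2.2]{KropRad} and the preceding discussion, and your two paragraphs supply exactly the details being delegated (part (1) is the stated criterion from \cite[Proposition 6(a)]{KropRad} unfolded against the definition of $I_0$, and part (2) reads off the $b$- and $c$-eigenvalues on the one-dimensional space $Z(\l_{ij})^d$). One small correction: $b$ and $c$ do \emph{not} commute with $d$ (the relations are $bd=qdb$ and $dc=\xi cd$); they only $q$-commute, but this still gives $d(bv)=q^{-1}b(dv)$ and $d(cv)=\xi c(dv)$, so the kernel of $d$ is stable under $b$ and $c$ and your eigenvalue comparison goes through unchanged.
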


\begin{lemma}\label{3.2}
Let $1\<l, l'\<n-1$ and  $i, i'\in \mathbb{Z}$.
\begin{enumerate}
\item[(1)] $M(l,i)$ is a simple module.
\item[(2)] $M(l,i)\cong M(l',i')$ if and only if $l=l'$ and $i\equiv i'$ ${\rm(mod}\ mn)$.
\item[(3)] $M(l,i+kn)\ncong M(n-l, i+l+k'n)$ for any $k,k'\in\mathbb Z$.
\end{enumerate}
\end{lemma}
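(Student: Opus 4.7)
The plan is to prove the three parts in sequence, leveraging the explicit action formulas on the standard basis $\{m_1,\ldots,m_l\}$ recorded just before the statement.

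For part (1), I would appeal directly to Proposition~6(b) of Krop--Radford, which identifies ${\rm rad}Z(\l_{ij})$ as the unique proper submodule of $Z(\l_{ij})$ and hence as its unique maximal submodule. The quotient $M(l,i) = Z(\l_{ij})/{\rm rad}Z(\l_{ij})$ is then simple by construction.

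For part (2), I would extract two isomorphism invariants from the module action. The first is the dimension $l$, which forces $l = l'$ whenever $M(l,i) \cong M(l',i')$. The second uses the distinguished one-dimensional subspace $M(l,i)^d = \Bbbk m_1$ noted in the text, on which $c$ acts as the scalar $\xi^i$. Any $H_{mn}(\xi)$-linear isomorphism $M(l,i) \to M(l,i')$ must map $d$-kernel to $d$-kernel, and being $c$-equivariant it must preserve the $c$-eigenvalue there, giving $\xi^i = \xi^{i'}$; since $\xi$ is a primitive $mn$-th root of unity, this is exactly $i \equiv i' \ ({\rm mod}\ mn)$. The converse is immediate because the action formulas depend on $i$ only through its class modulo $mn$.

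Part (3) I would reduce to (2) by case analysis on $l$ versus $n-l$. If $l \neq n-l$, the two modules have different dimensions and cannot be isomorphic. Otherwise $n$ is even and $l = n/2$, in which case part (2) would demand $i + kn \equiv i + l + k'n \ ({\rm mod}\ mn)$, i.e.\ $(k - k')n \equiv n/2 \ ({\rm mod}\ mn)$; this is impossible because the left side is a multiple of $n$ while $n/2$ is not, and the subgroup of multiples of $n$ inside $\mathbb{Z}_{mn}$ does not contain $n/2$. The only genuine verification in the whole argument is the identification of $M(l,i)^d$ together with the $c$-eigenvalue there, both of which read off directly from the prescribed action, so I do not expect any serious obstacle.
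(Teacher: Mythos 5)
Your proof is correct. The paper itself gives no argument for this lemma beyond citing [KropRad, Subsection 2.2] and ``the discussion above,'' and your argument --- simplicity from the uniqueness of the maximal submodule of $Z(\l_{ij})$, the dimension invariant forcing $l=l'$, the $c$-eigenvalue $\xi^i$ on $M(l,i)^d=\Bbbk m_1$ forcing $i\equiv i'\ ({\rm mod}\ mn)$, and the parity obstruction in the case $l=n/2$ for part (3) --- is exactly the natural way to fill in those details from the standard basis and action formulas recorded just before the statement.
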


\begin{proposition}\label{3.3} The following set
$$\{M(l,i), Z(\l_{i'j'})|1\<l\<n-1,i\in\mathbb{Z}_{mn},(i',j')\in I_0\}$$ is a representative set of isomorphism classes of simple $H_{mn}(\xi)$-modules.
\end{proposition}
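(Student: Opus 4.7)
The plan is to combine the two preceding lemmas with a dimension comparison and a general Frobenius-reciprocity argument to cover all simple modules. First, Lemmas~\ref{3.1} and \ref{3.2} already record that each module in the list is simple and that the two families are each pairwise non-isomorphic internally. So three things remain: (a) rule out cross-family isomorphisms, (b) rule out the parameter redundancies the list would otherwise contain, and (c) prove completeness, i.e.\ that every simple $H_{mn}(\xi)$-module appears in the list.

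For (a), I would note that every $Z(\lambda_{i'j'})$ has dimension $n$, while every $M(l,i)$ has dimension $l$ with $1\<l\<n-1$; in particular the dimensions are bounded by $n-1$ on the $M$-side, so no $M(l,i)$ can be isomorphic to any $Z(\lambda_{i'j'})$. For (b), Lemma~\ref{3.2}(2) already states that the $M(l,i)$ depend only on $l$ and on $i\in\mathbb{Z}_{mn}$, and Lemma~\ref{3.1}(2) gives uniqueness of the parameter $(i',j')\in I_0$ for the $Z$-family, so the list has no internal repetitions.

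The main step is completeness (c). Let $V$ be any simple $H_{mn}(\xi)$-module. Restrict $V$ to the subalgebra $A$ generated by $b,c,d$. Since $A/J(A)\cong \Bbbk G$ with $G=\langle b,c\rangle$ abelian (as recalled in Subsection~\ref{s3.1}), $V$ contains a $\Bbbk G$-eigenvector $v\neq 0$ annihilated by $d$, say $bv=\xi^j v$ and $cv=\xi^i v$; equivalently $\Bbbk v\cong\Bbbk_{\lambda_{ij}}$ as $A$-modules. Because $H_{mn}(\xi)$ is free as a right $A$-module, Frobenius reciprocity (concretely, the $A$-linear map $\Bbbk_{\lambda_{ij}}\to V$, $1_{\lambda_{ij}}\mapsto v$) produces a non-zero, hence surjective by simplicity, $H_{mn}(\xi)$-linear map $Z(\lambda_{ij})=H_{mn}(\xi)\otimes_A\Bbbk_{\lambda_{ij}}\twoheadrightarrow V$. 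If $(i,j)\in I_0$, then $Z(\lambda_{ij})$ is simple by Lemma~\ref{3.1}(1) and the surjection is an isomorphism, so $V\cong Z(\lambda_{ij})$. Otherwise $\xi^{j-mi}=q^{l-1}$ for a unique $l$ with $1\<l\<n-1$, and the results recalled from \cite[Prop.~6(b)]{KropRad} identify the unique simple quotient of $Z(\lambda_{ij})$ as $M(l,i)$, forcing $V\cong M(l,i)$.

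I do not expect a genuine obstacle: the only subtle point is verifying that the surjection $Z(\lambda_{ij})\twoheadrightarrow V$ produced from the cyclic $A$-submodule $\Bbbk v$ really exhausts $V$, but this is immediate from simplicity of $V$ together with freeness of $H_{mn}(\xi)$ over $A$, which was already established in the $A$-basis $\{a^i\mid 0\<i\<n-1\}$ recalled before Lemma~\ref{3.1}. Everything else is bookkeeping with the preceding lemmas.
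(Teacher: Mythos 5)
Your proposal is correct, and since the paper itself does not actually prove Proposition~\ref{3.3} --- it merely records it as a consequence of \cite[Subsection~2.2]{KropRad} and "the discussion above" --- you have supplied a self-contained argument where the paper gives a citation. The route you take (restrict a simple $V$ to the subalgebra $A=\langle b,c,d\rangle$, find a $\Bbbk G$-eigenvector killed by $d$, induce up, conclude by simplicity) is the standard Verma-style argument and is presumably also what underlies Krop--Radford's classification, so you are filling in detail rather than diverging in method. A few points worth flagging, none fatal: the existence of a nonzero vector in $V^d=\{v\in V\mid dv=0\}$ deserves one line (it follows because $d^n=0$ makes $d$ nilpotent on $V$), and the fact that $V^d$ is $\Bbbk G$-stable uses the relations $bd=qdb$ and $dc=\xi cd$; you gesture at this via "as recalled in Subsection~3.1" but stating it explicitly would tighten the argument. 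Also, Frobenius reciprocity ${\rm Hom}_{H_{mn}(\xi)}(H_{mn}(\xi)\otimes_A\Bbbk_{\lambda},V)\cong{\rm Hom}_A(\Bbbk_{\lambda},V)$ is just tensor-Hom adjunction and holds without freeness; the freeness of $H_{mn}(\xi)$ over $A$ is what pins down $\dim Z(\lambda)=n$, which you use implicitly in the dimension count for part (a). Finally, in the non-simple case you need $M(l,i)$ to be the \emph{unique} simple quotient of $Z(\lambda_{ij})$; this is secured by the recalled fact that ${\rm rad}\,Z(\lambda_{ij})$ is the unique proper submodule, so your conclusion $V\cong M(l,i)$ is justified. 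Overall the proof is sound and, arguably, a useful expansion of what the paper leaves to a reference.
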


By Proposition \ref{3.3} and the structures of the simple $H_{mn}(\xi)$-modules $M(l,i)$ and $Z(\l_{ij})$, one gets the following corollary.

\begin{corollary}\label{3.4}
Let $M$ be a finite dimensional semisimple $H_{mn}(\xi)$-module. Then $l(M)={\rm dim}(M^d)$.
\end{corollary}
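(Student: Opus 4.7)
The plan is to reduce the statement to the simple case by exploiting the $\Bbbk$-linearity of the map $V\mapsto V^d$. First I would observe that for any two $H_{mn}(\xi)$-modules $V$ and $W$, the operator $d$ acts diagonally on $V\oplus W$, so $(V\oplus W)^d=V^d\oplus W^d$ and hence $\dim((V\oplus W)^d)=\dim(V^d)+\dim(W^d)$. By induction, $\dim(M^d)=\sum_{i=1}^{l(M)}\dim(S_i^d)$ for any decomposition $M=\bigoplus_{i=1}^{l(M)}S_i$ of a finite-dimensional semisimple module $M$ into simple summands, where $l(M)$ is the length.

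The second step is to verify that $\dim(S^d)=1$ for every simple $H_{mn}(\xi)$-module $S$. By \prref{3.3}, the simple modules split into two families. For $S=M(l,i)$ with $1\leqslant l\leqslant n-1$ and $i\in\mathbb{Z}_{mn}$, the formula $dm_k=m_{k-1}$ for $k>1$ and $dm_1=0$ from the explicit action on the standard basis $\{m_1,\ldots,m_l\}$ gives $M(l,i)^d=\Bbbk m_1$, as already noted in the text. For $S=Z(\lambda_{ij})$ with $(i,j)\in I_0$, the $d$-action on the basis $\{a^k\otimes_A1_{\lambda_{ij}}\mid 0\leqslant k\leqslant n-1\}$ is computed in \seref{s3.1} and yields $Z(\lambda_{ij})^d=\Bbbk(1\otimes_A1_{\lambda_{ij}})$. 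Either way, $\dim(S^d)=1$.

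Combining the two steps, for semisimple $M=\bigoplus_{i=1}^{l(M)}S_i$ one gets $\dim(M^d)=\sum_{i=1}^{l(M)}\dim(S_i^d)=l(M)$, which is the claim. There is no real obstacle here: the only content is the two one-dimensional computations of $S^d$, and both were already performed in the construction of the simple modules preceding \prref{3.3}, so the corollary is essentially a bookkeeping consequence of that classification together with additivity of $(-)^d$ under direct sums.
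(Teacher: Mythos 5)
Your proof is correct and takes essentially the same approach the paper intends: the corollary follows from additivity of $(-)^d$ under direct sums together with the computations $M(l,i)^d=\Bbbk m_1$ and $Z(\lambda_{ij})^d=\Bbbk(1\otimes_A 1_{\lambda_{ij}})$, both of which are recorded in the text just before Proposition~\ref{3.3}.
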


\subsection{Projective modules}\label{s3.2}

Krop and Radford described all projective indecomposable $D(H_{\mathcal D})$-modules for any group datum $\mathcal{D}=(G,\chi,g,\mu)$ with $G$ being abelian in \cite[Subsection 2.3]{KropRad}. They described the radical series and simple factors for such modules. Hence form \cite[Subsection 2.3]{KropRad}, one can get the classification of projective indecomposable modules over $H_{mn}(\xi)$.
However, in order to classify all finite dimensional indecomposable $H_{mn}(\xi)$-modules, it is necessary to know the more details of the non-simple projective indecomposable modules over $H_{mn}(\xi)$. For this goal, we investigate non-simple projective indecomposable $H_{mn}(\xi)$-modules in this subsection.

Since $H_{mn}(\xi)$ is a symmetric algebra, $P(V)\cong I(V)$ and $P(V)/{\rm rad}(P(V))\cong{\rm soc}(P(V))\cong V$ for any simple $H_{mn}(\xi)$-module $V$.
Let $J$ denote the Jacobson radical of $H_{mn}(\xi)$. Then
by \cite[Subsection 2.3]{KropRad}, we have the following lemma.

\begin{lemma}\label{3.5}
Let $(i,j)\in I_0$ and $k,l\in\mathbb{Z}$ with $1\<l\<n-1$.
\begin{enumerate}
\item[(1)] $Z(\l_{ij})$ is a projective module.
\item[(2)] ${\rm rl}(P(M(l,k)))=3$ and $l(P(M(l,k)))=4$.
\item[(3)] ${\rm rl}(H_{mn}(\xi))=3$ and hence $J^3=0$.
\end{enumerate}
\end{lemma}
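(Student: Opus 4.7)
The plan is to obtain parts (1) and (2) essentially by translating Krop--Radford's description of the projective indecomposables over $D(H_{\mathcal D})$ from \cite[Subsection 2.3]{KropRad} to the present setting, and then to deduce (3) as a one-line corollary by combining the Loewy lengths of all projective indecomposable summands of the regular module.

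For (1), I would note that the condition $(i,j)\in I_0$ is exactly the condition $\xi^{j-mi}\notin\{1,q,\ldots,q^{n-2}\}$ singling out the ``regular'' characters of $G$, and in that regime Krop--Radford show that the induced module $Z(\lambda_{ij})$ already forms a whole block and equals the projective cover of its top. A more self-contained alternative would be a dimension count: once (2) is in hand one knows $\dim P(M(l,k))=2n$, so from the symmetric-algebra decomposition $H_{mn}(\xi)=\bigoplus_S\dim(S)\cdot P(S)$ and the known dimensions of the $M(l,i)$ (namely $l$), matching the total to $m^2n^4$ forces $\dim P(Z(\lambda_{ij}))=n=\dim Z(\lambda_{ij})$, hence $Z(\lambda_{ij})$ is projective.

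For (2), I would read off from \cite[Subsection 2.3]{KropRad} that $P(M(l,k))$ has three radical layers: top $M(l,k)$, a semisimple middle layer consisting of two non-isomorphic simples of dimension $n-l$ (to be identified carefully, using Lemma~\ref{3.2}(3), as $M(n-l,k+l)$ and $M(n-l,k+l-n)$ or their analogues), and socle $M(l,k)$. This gives ${\rm rl}(P(M(l,k)))=3$ and composition length $l(P(M(l,k)))=4$ at once. Then (3) follows since ${\rm rl}(H_{mn}(\xi))=\max_S{\rm rl}(P(S))$, which by (1) equals $1$ on the $Z(\lambda_{ij})$ summands and by (2) equals $3$ on the $P(M(l,k))$ summands; hence ${\rm rl}(H_{mn}(\xi))=3$ and $J^3=0$.

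The main obstacle is really (2): while the statement is a direct quotation of \cite{KropRad}, to use it later in Section~\ref{s4} one wants the two middle composition factors identified explicitly, and this requires constructing a concrete generator of $P(M(l,k))$ and tracing the action of $a,b,c,d$ through the commutation relation $ad-da=b-c^m$ to see how the top layer maps into the middle and socle. This bookkeeping — rather than any conceptual difficulty — is where the work actually lies, and it is the groundwork the authors will reuse when they classify indecomposables of Loewy length $2$.
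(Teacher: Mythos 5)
Your proposal is correct and takes essentially the same approach as the paper: Lemma~\ref{3.5} is obtained in the paper by direct citation of \cite[Subsection~2.3]{KropRad}, with (3) following at once from (1), (2) and the fact that the Loewy length of a self-injective algebra equals the maximal Loewy length of its projective indecomposables. You are also right that the explicit identification of the middle Loewy layer of $P(M(l,k))$ and a concrete generator are deferred to Proposition~\ref{3.8}, which is where the bookkeeping through the defining relations actually happens.
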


For $1\<l\<n-1$ and $i\in \mathbb{Z}$, define $\g_{k}(l,i), \a_{k}(l,i)\in\Bbbk$, $1\<k\<n$, by
$$\begin{array}{c}
	\vspace{0.1cm}
\g_{k}(l,i)=(k)_q(q^{i+l}-q^{i-k}),\ \a_{k}(l,i)=(k)_q(q^{i}-q^{i+l-k}),\ 1\<k\<n-1,\\
\g_{n}(l,i)=\frac{1}{(n-1)!_q}(q^i-q^{i+l}), \ \a_{n}(l,i)=\frac{1}{(n-1)!_q}(q^{i+l}-q^{i}).\\
\end{array}$$

\begin{lemma}\label{3.6}
Let $1\<l\<n-1$, $i\in\mathbb Z$ and $1\<k\<n$.
\begin{enumerate}
\item[(1)] $\g_{k}(l,i)=0$ if and only if $k=n-l$.
\item[(2)] $\a_{k}(l,i)=0$ if and only if $k=l$.
\item[(3)] $\g_{k}(l,i)=\a_{n-k}(l,i)$ for all $1\<k\<n-1$ and $\g_{n}(l,i)=-\a_{n}(l,i)$.
\item[(4)] $\g_k(l,i)=\a_{l+k}(l,i)$ for all $1\<k\<n-l-1$.
\item[(5)] $\g_k(l,i)=\a_{k-(n-l)}(l,i)$ for all $n-l+1\<k\<n-1$.
\item[(6)] $\g_{k}(l,i)=\a_k(n-l,i+l)$ for all $1\<k\<n$.
\item[(7)] $\g_k(l,i)=\g_k(l,i+n)$ and $\a_k(l,i)=\a_k(l,i+n)$ for all $1\<k\<n$.
\end{enumerate}
\end{lemma}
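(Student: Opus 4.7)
The plan is to verify all seven identities by direct calculation from the definitions, exploiting two basic facts: that $q$ is a primitive $n$-th root of unity (so $q^n=1$ and $q^s\ne 1$ for $1\le s\le n-1$), and that $(k)_q=(q^k-1)/(q-1)$. A single ancillary identity, which I would establish first and then reuse throughout, is $(n-k)_q=-q^{-k}(k)_q$ for $1\le k\le n-1$; it follows immediately from the closed form together with $q^{n-k}=q^{-k}$. Throughout, the boundary case $k=n$ appearing in (1), (2), (3), (6), (7) should be treated separately since the prefactor is $1/(n-1)!_q$ rather than $(k)_q$.

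For parts (1) and (2), the factor $(k)_q$ is nonzero in the range $1\le k\le n-1$, so $\gamma_k(l,i)=0$ is equivalent to $q^{l+k}=1$; combined with $2\le l+k\le 2n-2$ this forces $l+k=n$. Symmetrically, $\alpha_k(l,i)=0$ reduces to $q^{k-l}=1$, which together with $|k-l|\le n-2$ forces $k=l$. The $k=n$ cases produce no new zeros because $q^l\ne 1$ for $1\le l\le n-1$.

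Parts (3)--(5) are three parametrizations of essentially the same manipulation. For (3) I would substitute $(n-k)_q=-q^{-k}(k)_q$ into $\alpha_{n-k}(l,i)$ and reduce $q^{l-(n-k)}$ to $q^{l+k}$ using $q^n=1$; after factoring out $q^{-k}$ the expression collapses to $\gamma_k(l,i)$, and the $k=n$ case is a trivial sign flip. For (4) and (5), rewriting both sides as a common expression of the form $(q-1)^{-1}q^{i-k}(q^k-1)(q^{k+l}-1)$ yields the required equalities; in (5) the shifted index $k+l-n$ is first absorbed via $q^{k+l-n}=q^{k+l}$. For (6), the substitution $(l,i)\mapsto(n-l,i+l)$ in $\alpha_k$ combined with $q^n=1$ reproduces $\gamma_k(l,i)$ directly, with the $k=n$ case following from $q^{(n-l)+l}=1$. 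Part (7) is immediate because every $q$-exponent in $\gamma_k$ or $\alpha_k$ changes by $n$, hence is fixed, under $i\mapsto i+n$. No single step is genuinely hard; the only care needed lies in separating the generic range $1\le k\le n-1$ from the boundary $k=n$, and in tracking the sign that accompanies $(n-k)_q=-q^{-k}(k)_q$ when exponents are reduced modulo $n$.
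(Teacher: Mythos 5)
Your verification is correct and is exactly the ``straightforward verification'' the paper invokes: each part is a short calculation using $q^n=1$, the nonvanishing of $(k)_q$ for $1\le k\le n-1$, and (for part (3)) the identity $(n-k)_q=-q^{-k}(k)_q$, with the $k=n$ boundary handled separately. I checked each of the seven claims against the definitions of $\g_k$ and $\a_k$ and your reductions all hold.
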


\begin{proof}
It follows from a straightforward verification.
\end{proof}

For any positive integer $s$, let $M_s(\Bbbk)$ denote the algebra of $s\times s$ matrices over $\Bbbk$. Let $I_s\in M_s(\Bbbk)$ denote the identity matrix, and let $D_s\in M_s(\Bbbk)$ be defined by
$$D_s=\left(
\begin{array}{ccccc}
0 & 1 & 0 & \cdots & 0 \\
0 & 0 & 1 & \cdots & 0 \\
0  & 0 & 0 & \ddots & \vdots \\
\vdots & \vdots & \vdots & \ddots & 1 \\
0 & 0 & 0 & \cdots & 0 \\
\end{array}
\right).$$

For $1\<l\<n-1$ and $i\in \mathbb{Z}$, let $X_{l,i}, Z_{l,i}\in M_{n}(\Bbbk)$ be given by
$$X_{l,i}=\left(
      \begin{array}{ccccc}
        0 & 0 & 0 & \cdots & \g_{n}(l,i) \\
        \g_{1}(l,i) & 0 & 0 & \cdots & 0 \\
        0 & \g_{2}(l,i) & 0 & \cdots & 0 \\
        \vdots & \vdots & \ddots & \ddots & \vdots \\
        0 & 0 & \cdots & \g_{n-1}(l,i) & 0 \\
      \end{array}
    \right),$$
    $$
 \ Z_{l,i}=\left(
      \begin{array}{ccccc}
        0 & 0 & 0 & \cdots & \a_{n}(l,i) \\
        \a_{1}(l,i) & 0 & 0 & \cdots & 0 \\
        0 & \a_{2}(l,i) & 0 & \cdots & 0 \\
        \vdots & \vdots & \ddots & \ddots & \vdots \\
        0 & 0 & \cdots & \a_{n-1}(l,i) & 0 \\
      \end{array}
    \right),
$$

and define $A(l,i)$, $B(l,i)$, $C(l,i)$ and $D$ in $M_{2n}(\Bbbk)$ by
$$\begin{array}{c}
\vspace{0.1cm}
A(l,i)=\left(
    \begin{array}{cc}
      X_{l,i} & 0 \\
      D_n^{n-l-1} & Z_{l,i} \\
    \end{array}
  \right),\
D=\left(
    \begin{array}{cc}
      D_n & 0 \\
      0 & D_n \\
    \end{array}
\right),\\
\vspace{0.1cm}
B(l,i)={\rm diag}\{q^{i-1},q^{i-2},\cdots,q^{i-n},q^{i+l-1},q^{i+l-2},\cdots,q^{i+l-n}\},\\
C(l,i)={\rm diag}\{\xi^{i+l-n},\xi^{i+l-n+1}, \cdots, \xi^{i+l-1}, \xi^i,\xi^{i+1},\cdots, \xi^{i+n-1}\}.\\
\end{array}$$

\begin{lemma}\label{3.7}
Let $1\<l\<n-1$ and $i\in \mathbb{Z}$. Then there is a unique algebra map $\phi_{l,i}:H_{mn}(\xi)\rightarrow M_{2n}(\Bbbk)$ such that
$$\phi_{l,i}(a)=A(l,i),\ \phi_{l,i}(b)=B(l,i),\ \phi_{l,i}(c)=C(l,i),\ \phi_{l,i}(d)=D.$$
Let $P(l,i)$ denote the corresponding left $H_{mn}(\xi)$-module.
\end{lemma}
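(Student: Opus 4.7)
The plan is to produce $\phi_{l,i}$ by invoking the universal property of the free algebra on $a, b, c, d$: uniqueness is automatic since $\{a,b,c,d\}$ generates $H_{mn}(\xi)$, and for existence it suffices to verify that the prescribed matrices $A(l,i)$, $B(l,i)$, $C(l,i)$, $D$ satisfy the ten defining relations of $H_{mn}(\xi)$ listed in Section \ref{s2}.

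I would dispose of the easy relations first. The six relations $b^{mn}=1$, $c^{mn}=1$, $bc=cb$, $d^n=0$, $bd=qdb$ and $dc=\xi cd$ follow immediately from the fact that $B(l,i)$ and $C(l,i)$ are diagonal matrices with entries that are $mn$-th roots of unity, $D$ is block-diagonal with each block $D_n$ satisfying $D_n^n=0$, and the $q$- respectively $\xi$-commutation is visible on every nonzero entry of $D$. Next I would verify $ab=qba$: since $B(l,i)$ is diagonal and $A(l,i)$ has nonzero entries only in the subdiagonals of $X_{l,i}$ and $Z_{l,i}$, the wraparound entries $\gamma_n,\alpha_n$, and the lower-left block $D_n^{n-l-1}$, one checks position-by-position that consecutive diagonal entries of $B(l,i)$ differ exactly by the factor $q$. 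For $ad-da=b-c^m$, the block-diagonal form of $D$ kills the off-diagonal contribution (since $D_n^{n-l-1}$ commutes with $D_n$), reducing the identity to computing $X_{l,i}D_n-D_nX_{l,i}$ and $Z_{l,i}D_n-D_nZ_{l,i}$. Both are diagonal, and the key computation uses the $q$-identity $(r)_q-q(r-1)_q=1$, together with $(n-1)_q=-q^{n-1}$ at the corners; comparing with $B(l,i)-C(l,i)^m$ via $\xi^m=q$ and $q^n=1$ produces the match.

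The two remaining relations are harder. For $a^n=b^n-1$, since $q^n=1$ one has $B(l,i)^n=I_{2n}$, so it suffices to prove $A(l,i)^n=0$. Block multiplication yields
\[
A(l,i)^n=\begin{pmatrix}X_{l,i}^n & 0\\ \displaystyle\sum_{k=0}^{n-1}Z_{l,i}^k D_n^{n-l-1}X_{l,i}^{n-1-k} & Z_{l,i}^n\end{pmatrix},
\]
and each of $X_{l,i}$ and $Z_{l,i}$ is a weighted cyclic shift, so $X_{l,i}^n$ is the scalar $\prod_{k=1}^n\gamma_k(l,i)$ times $I_n$; the cyclotomic identity $\prod_{k=0}^{n-1}(u-vq^{-k})=u^n-v^n$ applied to $u=q^{i+l}$, $v=q^i$ forces this scalar to vanish (both $u^n$ and $v^n$ equal $1$), and the same argument handles $Z_{l,i}^n$. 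For the off-diagonal sum, each summand is supported at a single position with an explicit product of $\gamma$'s and $\alpha$'s, and these products vanish for the same cyclotomic reason. Finally, the deformed relation
\[
ac=\xi^{-1}ca+\frac{\xi^{-1}-\xi^{n-1}}{(n-1)!_q}(c^{m+1}-qbc)d^{n-1}
\]
is the main obstacle. Since $C(l,i)$ is diagonal, $AC-\xi^{-1}CA$ has the same support as $A(l,i)$; a direct entry check shows that the contributions from the $X_{l,i}$ and $Z_{l,i}$ blocks vanish because adjacent diagonal entries of $C(l,i)$ within each block differ exactly by $\xi^{-1}$, leaving only a single discrepancy arising from $D_n^{n-l-1}$, where the two diagonal blocks of $C(l,i)$ interact and the shift by $n-l-1$ positions produces a ``defect'' factor of $\xi^n$ instead of $\xi^{-1}$. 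That residual term lives at the position $(1,n)$ of the lower-left block, which is precisely the support of the right-hand correction $(c^{m+1}-qbc)d^{n-1}$, since $D^{n-1}$ concentrates at position $(1,n)$ in each $n\times n$ block. Matching scalars amounts to the identity $\xi^{-1}-\xi^{n-1}=\xi^{-1}(1-\xi^n)$ together with evaluating $c^{m+1}-qbc$ on the appropriate diagonal entries, and I expect this coefficient chase to be the most delicate bookkeeping step of the proof.
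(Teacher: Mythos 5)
Your overall strategy coincides with the paper's: uniqueness is free since $a,b,c,d$ generate, and existence reduces to checking the defining relations on the four matrices. Most of your relation checks are fine, and the diagonal blocks of $A(l,i)^n$ do vanish for the reason you give (the full weight products $\prod_k\g_k(l,i)$ and $\prod_k\a_k(l,i)$ are zero; more directly, $\g_{n-l}(l,i)=0$ and $\a_l(l,i)=0$ by Lemma \ref{3.6}(1,2)).

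However, your treatment of the off-diagonal block of $A(l,i)^n$ contains a genuine gap. You assert that in
$\sum_{k=0}^{n-1}Z_{l,i}^k D_n^{n-l-1}X_{l,i}^{n-1-k}$
each summand already vanishes ``for the same cyclotomic reason.'' This is false: the individual summands are generically nonzero and the sum vanishes only because of cancellation. Take $n=2$, $l=1$, so $D_n^{n-l-1}=I_2$ and the sum is $X_{1,i}+Z_{1,i}$. Here $\g_1(1,i)=\a_1(1,i)=0$ but $\g_2(1,i)=q^i(1-q)\neq 0$ and $\a_2(1,i)=-q^i(1-q)\neq 0$, so both summands are nonzero matrices; only their sum is zero, via $\g_n(l,i)=-\a_n(l,i)$. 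In general, for a basis vector $v_j$ with $j>n-l$ there are exactly two ``paths'' from the $v$-block to the $u$-block contributing to the $(j+l-n,j)$-entry whose weights avoid the zero factors $\g_{n-l}$ and $\a_l$, and their cancellation requires precisely the identities $\g_k(l,i)=\a_{l+k}(l,i)$ ($1\<k\<n-l-1$) and $\g_n(l,i)=-\a_n(l,i)$ of Lemma \ref{3.6}(3,4), i.e.\ the relation $\a_n+\frac{\g_1\cdots\g_{n-l-1}\g_n}{\a_{l+1}\cdots\a_{n-1}}=0$. The paper sidesteps this bookkeeping by conjugating $A(l,i)$ with explicit elementary matrices $P_1,P_2,P_3$ into a block-diagonal matrix whose blocks are strictly lower triangular, whence $A(l,i)^n=0$ is immediate; if you keep your direct expansion, you must replace the term-by-term vanishing claim with the two-term cancellation argument above. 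The rest of your outline (including the $ac$ relation, which you correctly flag as a coefficient chase) is consistent with what the paper does.
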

\begin{proof}
For simplicity, let $\g_j:=\g_j(l,i)$ and $\a_j:=\a_j(l,i)$ for all $1\<j\<n$.
Let
$$A_1(l,i):=\left(
\begin{array}{cc}
\tilde{X}_{l,i} & 0 \\
D_n^{n-l-1} & \tilde{Z}_{l,i} \\
\end{array}
\right)
\text{ and }
A_2(l,i):=\left(
\begin{array}{cc}
\tilde{X}_{l,i} & 0 \\
T & \tilde{Z}_{l,i} \\
\end{array}
\right),$$
where $\tilde{X}_{l,i}=X_{l,i}-\g_nD_n^{n-1}$, $\tilde{Z}_{l,i}=Z_{l,i}-\a_nD_n^{n-1}$ and $T$ is an $n\times n$ matrix with the $(1, n-l)$-entry and $(l+1,n)$-entry to be 1 and the other entries to be zero.
For any $1\<j\neq k\<2n$, let $P_{j,k}\in M_{2n}(\Bbbk)$ be the elementary matrix obtained from $I_{2n}$ by interchanging the $j^{th}$ row and the $k^{th}$ row, and let $P_{j,k}(\g)$ be the elementary matrix obtained from $I_{2n}$ by adding $\g$ times the $k^{th}$ row to the $j^{th}$ row, where $\g\in\Bbbk$.
Define a matrix $P_1\in M_{2n}(\Bbbk)$ by
$$\begin{array}{c}
P_1:=P_{1,n+l+1}(\g_n)P_{2,n+l+2}(\frac{\g_1\g_n}{\a_{l+1}})
P_{3,n+l+3}(\frac{\g_1\g_2\g_n}{\a_{l+1}\a_{l+2}})\cdots
P_{n-l,2n}(\frac{\g_1\g_2\cdots\g_{n-l-1}\g_n}{\a_{l+1}\a_{l+2}\cdots\a_{n-1}}).\\
\end{array}$$
Then $P_1$ is an invertible matrix. One can check that
$P_1^{-1}A(l,i)P_1=A_1(l,i)$ since $\a_n+\frac{\g_1\g_2\cdots\g_{n-l-1}\g_n}{\a_{l+1}\a_{l+2}\cdots\a_{n-1}}=0$
by Lemma \ref{3.6}(3).
Clearly, if $l=1$ then $A_2(l,i)=A_1(l,i)$. For $l>1$, let $\theta_1, \theta_2, \cdots, \theta_{l-1}\in\Bbbk$ be defined by
$$\begin{array}{c}
\theta_1=\frac{1}{\a_{l-1}}, \theta_2=\frac{1+\theta_1\g_{n-2}}{\a_{l-2}},
\theta_3=\frac{1+\theta_2\g_{n-3}}{\a_{l-3}}, \cdots, \theta_{l-1}=\frac{1+\theta_{l-2}\g_{n-l+1}}{\a_1},
\end{array}$$
and define a matrix $P_2\in M_{2n}(\Bbbk)$ by
$$P_2:=P_{n+l-1,n-1}(-\theta_1)P_{n+l-2,n-2}(-\theta_2)\cdots P_{n+1, n-l+1}(-\theta_{l-1}).$$
Then $P_2$ is an invertible matrix and $P_2^{-1}A_1(l,i)P_2=A_2(l,i)$.
Thus, in any case, $A(l,i)$ is similar to $A_2(l,i)$.
Now let $P_3:=P_{n-l+1,n+1}P_{n-l+2,n+2}\cdots P_{n, n+l}\in M_{2n}(\Bbbk)$. Then
$P_3$ is an invertible matrix and
$P_3^{-1}A_2(l,i)P_3=\left(\begin{matrix}
T_1&0\\
0&T_2\\
\end{matrix}\right)$, where $T_1$ and $T_2$ are strictly lower triangular matrices in $M_n(\Bbbk)$. It follows that $A(l,i)^n=0$.

Clearly, $B(l,i)^n=I_{2n}$, and hence $A(l,i)^n=B(l,i)^n-I_{2n}$. It is easy to check that
$B(l,i)^{mn}=C(l,i)^{mn}=I_{2n}$, $D^n=0$, $A(l,i)B(l,i)=qB(l,i)A(l,i)$, $DC(l,i)=\xi C(l,i)D$, $B(l,i)C(l,i)=C(l,i)B(l,i)$ and $B(l,i)D=qDB(l,i)$. Moreover, one can check that $A(l,i)D-DA(l,i)=B(l,i)-C(l,i)^m$ and
$$\begin{array}{rl}
&A(l,i)C(l,i)\\
 =&\xi^{-1} C(l,i)A(l,i)+\frac{1}{(n-1)!_q}(\xi^{-1}-\xi^{n-1})(C(l,i)^{m+1}-qB(l,i)C(l,i))D^{n-1}.
 \end{array}
 $$
 Thus, the proposition follows.
 \end{proof}

Let $1\<l\<n-1$ and $i\in \mathbb{Z}$. Then there is a $\Bbbk$-basis $\{v_1,v_2,\cdots,v_n,u_1,u_2, \cdots,u_n\}$ in $P(l,i)$ such that the $H_{mn}(\xi)$-action is given by
\begin{equation*}
\begin{array}{ll}
\vspace{0.2 cm}
av_k=\left\{
\begin{array}{ll}
\g_{k}(l,i)v_{k+1},& 1\<k\<n-l-1,\\
\g_{k}(l,i)v_{k+1}+u_{k-n+l+1},& n-l\<k\<n-1,\\
\g_{n}(l,i)v_1+u_{l+1},& k=n,\\
\end{array}\right.&\\
\vspace{0.2 cm}
au_k=\left\{
\begin{array}{ll}
\a_{k}(l,i)u_{k+1}, & 1\<k\<n-1,\\
\a_{n}(l,i)u_1,& k=n,\\
\end{array}\right.\\
\end{array}
\end{equation*}
\begin{equation*}
\begin{array}{ll}
\vspace{0.2 cm}
bv_k=q^{i+n-k}v_k,\ 1\<k\<n, &cv_k={\xi}^{i-(n-l)+k-1}v_k,\ 1\<k\<n\\
\vspace{0.2 cm}
bu_k=q^{i+l-k}u_k, \ \ 1\<k\<n, &cu_k={\xi}^{i+k-1}u_k, \ 1\<k\<n,\\
\vspace{0.2 cm}
dv_k=\left\{
\begin{array}{ll}
0, & k=1,\\
v_{k-1},& 2\<k\<n,\\
\end{array}\right.&
du_k=\left\{
\begin{array}{ll}
0, & k=1,\\
u_{k-1},& 2\<k\<n.\\
\end{array}\right.\\
\end{array}
\end{equation*}
Such a basis is called a standard basis of $P(l,i)$. Clearly, $P(l,i)^d=\Bbbk v_1+\Bbbk u_1$.

\begin{proposition}\label{3.8}
Let $1\<l\<n-1$ and $i\in \mathbb{Z}$. Then $P(l,i)\cong P(M(l,i))$.
\end{proposition}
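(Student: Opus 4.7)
The plan is to establish $P(l,i)\cong P(M(l,i))$ by realising $P(l,i)$ as an essential extension of its simple socle $M(l,i)$ whose composition length matches that of $P(M(l,i))$. First I would exhibit $F_1:={\rm span}\{u_1,u_2,\cdots,u_l\}$ as a submodule of $P(l,i)$ isomorphic to $M(l,i)$: it is closed under the $H_{mn}(\xi)$-action because $au_l=\a_l(l,i)u_{l+1}=0$ by Lemma~\ref{3.6}(2), and the action formulas on the $u_k$ match those on the standard basis of $M(l,i)$. Next I would prove $F_1={\rm soc}(P(l,i))$. For any simple submodule $W\subseteq P(l,i)$ one has $W^d\subseteq P(l,i)^d=\Bbbk v_1+\Bbbk u_1$, and since $v_1,u_1$ have distinct $b$-eigenvalues $q^{i-1}$ and $q^{i+l-1}$ (as $1\<l\<n-1$), $W^d$ equals either $\Bbbk v_1$ or $\Bbbk u_1$. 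The first case is ruled out by iteratively applying $a$ to $v_1$: using $\g_k(l,i)\neq 0$ for $k<n-l$ together with $\g_{n-l}(l,i)=0$ from Lemma~\ref{3.6}(1) (which makes $av_{n-l}=u_1$), one forces $u_1\in W$, contradicting $W^d=\Bbbk v_1$. Hence $W=F_1\cong M(l,i)$, the socle is simple, and in particular $P(l,i)$ is indecomposable.

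Then I would write down the composition series $0\subset F_1\subset F_2\subset F_3\subset F_4=P(l,i)$ with $F_2:={\rm span}\{u_1,\cdots,u_n\}$ and $F_3:=F_2+{\rm span}\{v_1,\cdots,v_{n-l}\}$; closedness of $F_3$ under $a$ again relies on $\g_{n-l}(l,i)=0$. Computing the induced action on each successive quotient and matching the $a$-coefficients against the standard-basis formulas, the identities $\g_k(l,i)=\a_{l+k}(l,i)$, $\g_k(l,i)=\a_{k-(n-l)}(l,i)$ and $\g_k(l,i)=\a_k(n-l,i+l)$ from Lemma~\ref{3.6}(4)--(6), combined with the periodicity in~(7), give $F_2/F_1\cong M(n-l,i+l)$, $F_3/F_2\cong M(n-l,i+l-n)$ and $F_4/F_3\cong M(l,i)$. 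Consequently $l(P(l,i))=4$.

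To finish, since $P(l,i)$ is an essential extension of $M(l,i)$ (any nonzero submodule contains the simple socle), it embeds into the injective envelope $I(M(l,i))=P(M(l,i))$, using that $H_{mn}(\xi)$ is symmetric. Lemma~\ref{3.5}(2) supplies $l(P(M(l,i)))=4=l(P(l,i))$, which forces the embedding to be an isomorphism. The main technical hurdle is the socle computation, specifically the case $W^d=\Bbbk v_1$, where one must chain the $a$-action through $v_1,v_2,\cdots,v_{n-l}$ and exploit the collapse $\g_{n-l}(l,i)=0$ to reach $u_1$; once the socle is pinned down, the filtration and the identification of the simple quotients are essentially bookkeeping with Lemma~\ref{3.6}.
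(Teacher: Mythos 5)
Your proposal is correct and follows essentially the same route as the paper's proof: the same identification of ${\rm span}\{u_1,\dots,u_l\}$ as the unique simple submodule via the two-case analysis of $W^d\subseteq\Bbbk v_1+\Bbbk u_1$ (with the collapse $\g_{n-l}(l,i)=0$ forcing $u_1$ into $W$ in the bad case), the same four simple factors, and the same conclusion by embedding into $I(M(l,i))\cong P(M(l,i))$ and comparing lengths via Lemma~\ref{3.5}(2). The only cosmetic difference is that you organize the length count as a composition series $F_1\subset F_2\subset F_3\subset F_4$ rather than computing the socle layers ${\rm soc}^2P(l,i)$ as the paper does, which is equally valid for the final length comparison.
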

\begin{proof}
Using the standard basis of $P(l,i)$ given above, and putting
$$M:=\langle u_1\rangle={\rm span}\{u_1,u_2,\cdots,u_{l}\}.$$
Then it is easy to see that $M$ is a submodule of $P(l,i)$ and $M\cong M(l,i)$.
Let $N$ be a simple submodule of $P(l,i)$. Then ${\rm dim}(N^d)=1$ by Corollary \ref{3.4}. Let $0\neq z\in N^d$. Then $z=\b_1v_1+\b_2u_1$ by $N^d\subseteq P(l,i)^d$, where $\b_1, \b_2\in\Bbbk$.  If $\b_1\neq 0$, then $bz-q^{i+l-1}z=(q^{i-1}-q^{i+l-1})\b_1v_1$. Hence $v_1\in N$ and so  $a^{n-l}v_1=\g_1(l,i)\g_2(l,i)\cdots\g_{n-l-1}(l,i)u_1\in N$. Thus, $v_1, u_1\in N$ and ${\rm dim}N^d=2$, a contradiction. Hence $\b_1=0$, $\b_2\neq 0$ and $u_1=\b_2^{-1}z\in N$. This implies $M=\langle u_1\rangle\subseteq N$, and so $N=M$
since $M$ and $N$ are both simple. Thus, ${\rm soc}P(l,i)=M\cong M(l,i)$. Hence $P(l,i)$ is isomorphic to a submodule of $P(M(l,i))$ by $P(M(l,i))\cong I(M(l,i))$.
Hence ${\rm dim}((P(l,i)/{\rm soc}P(l,i))^d)=2$, and so
$l({\rm soc}(P(l,i)/{\rm soc}P(l,i)))\<2$ by Corollary \ref{3.4}.
Let $\ol{v}$ denote the image of $v\in P(l,i)$ under the canonical epimorphism $P(l,i)\ra P(l,i)/{\rm soc}P(l,i)$. Then one can see that $M_1:={\rm span}\{\ol{v_1},\ol{v_2},\cdots,\ol{v_{n-l}}\}$ and $M_2:={\rm span}\{\ol{u_{l+1}},\ol{u_{l+2}},\cdots,\ol{u_n}\}$ are submodules of $P(l,i)/{\rm soc}P(l,i)$.
By Lemma \ref{3.6}(4,6,7), one can check that $M_1\cong M(n-l,i+l-n)$ and $M_2\cong M(n-l,i+l)$.
It follows that  ${\rm soc}(P(l,i)/{\rm soc}P(l,i))=M_1\oplus M_2$, and consequently,
${\rm soc}^2P(l,i)={\rm span}\{v_1,v_2, \cdots,v_{n-l}, u_1,u_2,\cdots,u_n\}$.
Then by Lemma \ref{3.6}(5), one gets that $P(l,i)/{\rm soc}^2P(l,i)\cong M(l,i)$.
Thus, ${\rm rl}(P(l,i))=3$ and $l(P(l,i))=4$. Then by Lemma \ref{3.5}(2), $P(l,i)\cong P(M(l,i))$.
\end{proof}

\begin{corollary}\label{3.9}
A representative set of isomorphism classes of indecomposable projective $H_{mn}(\xi)$-modules is given by
 $$\{P(l,i),Z(\l_{i'j'})|1\<l\<n-1,i\in\mathbb{Z}_{mn},(i',j')\in I_0\}.$$
\end{corollary}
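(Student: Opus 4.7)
The plan is to exploit the standard bijection $V \mapsto P(V)$ between isomorphism classes of simple modules and isomorphism classes of indecomposable projective modules over any finite dimensional algebra. Since Proposition \ref{3.3} already lists a complete irredundant set of simple $H_{mn}(\xi)$-modules, it suffices to identify the projective cover of each simple on that list and to verify that the resulting indecomposable projectives are pairwise non-isomorphic.

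First I would treat the two families separately. For $(i',j') \in I_0$, the module $Z(\lambda_{i'j'})$ is already projective by Lemma \ref{3.5}(1), so it is its own projective cover and in particular indecomposable. For $1 \leq l \leq n-1$ and $i \in \mathbb{Z}_{mn}$, Proposition \ref{3.8} gives $P(l,i) \cong P(M(l,i))$, which is indecomposable as the projective cover of a simple. Thus each module in the displayed set is an indecomposable projective, and by the bijection every isomorphism class of indecomposable projective occurs in this list.

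Finally I would check pairwise non-isomorphism. Within each family this follows from the injectivity of $V \mapsto P(V)$ on isomorphism classes combined with the pairwise non-isomorphism of simples recorded in Lemmas \ref{3.1}(2) and \ref{3.2}(2). For the cross-family comparison, $Z(\lambda_{i'j'})$ is simple by Lemma \ref{3.1}(1) and hence has radical length $1$, whereas $P(l,i)$ has radical length $3$ by Proposition \ref{3.8}; so no module in one family can be isomorphic to one in the other. No serious obstacle remains, since the substantive work has already been carried out in Proposition \ref{3.8}, where the submodule $\langle u_1\rangle \cong M(l,i)$ was shown to be the socle of $P(l,i)$; the corollary is essentially an assembly of the projective-cover correspondence with the classification of simples.
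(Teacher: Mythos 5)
Your proposal is correct and follows essentially the same route as the paper, whose proof is simply a citation of Proposition \ref{3.3}, Lemma \ref{3.5}(1), and Proposition \ref{3.8}; you have merely made explicit the standard bijection $V\mapsto P(V)$ and the pairwise non-isomorphism check that the paper leaves implicit.
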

\begin{proof}
It follows from Proposition \ref{3.3}, Lemma \ref{3.5}(1) and Proposition \ref{3.8}.
\end{proof}

\begin{corollary}\label{3.10}
Let $1\<l\<n-1$ and $i\in\mathbb{Z}$. Then
$P(l,i)$ is both a projective cover of $M(l,i)$ and an injective envelope of $M(l,i)$.
\end{corollary}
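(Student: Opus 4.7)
The statement is essentially a direct corollary of Proposition~\ref{3.8} combined with the symmetric-algebra property of $H_{mn}(\xi)$ that was invoked at the start of Subsection~\ref{s3.2}. My plan is to package these two facts into a short argument with no new computation.

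First, I would quote Proposition~\ref{3.8} to assert $P(l,i)\cong P(M(l,i))$. Since a projective cover is unique up to isomorphism, this gives immediately that $P(l,i)$ is a projective cover of $M(l,i)$. This handles half the claim with no further work.

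For the injective envelope half, I would invoke the fact, already used in Subsection~\ref{s3.2}, that $H_{mn}(\xi)$ is a symmetric algebra, so that for every simple module $V$ one has $P(V)\cong I(V)$ and moreover ${\rm soc}(P(V))\cong V$. Applying this with $V=M(l,i)$ yields $P(M(l,i))\cong I(M(l,i))$, and chaining with the isomorphism from Proposition~\ref{3.8} gives $P(l,i)\cong I(M(l,i))$, proving that $P(l,i)$ is an injective envelope of $M(l,i)$.

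There is no serious obstacle here: everything hard has already been done in establishing Proposition~\ref{3.8} (the construction of the representation $\phi_{l,i}$, the verification of relations, and the identification of the socle and radical layers). The only thing to be mildly careful about is to cite the symmetric-algebra property explicitly, since that is what converts the projective-cover identification into an injective-envelope identification. The corollary can therefore be stated with a one-line proof consisting of Proposition~\ref{3.8} together with $P(M(l,i))\cong I(M(l,i))$.
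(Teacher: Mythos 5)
Your proposal is correct and follows essentially the same route as the paper: the paper's own proof simply cites the proof of Proposition~\ref{3.8}, which already contains both the identification $P(l,i)\cong P(M(l,i))$ and the use of the symmetric-algebra fact $P(M(l,i))\cong I(M(l,i))$ (together with ${\rm soc}\,P(l,i)\cong M(l,i)$). Your packaging via uniqueness of projective covers and injective envelopes is a clean and faithful rendering of that argument.
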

\begin{proof} It follows from the proof of Proposition \ref{3.8}.
\end{proof}

By Proposition \ref{3.8} and its proof, we have the following corollary.

\begin{corollary}\label{3.11}
If $P$ is a non-simple indecomposable projective $H_{mn}(\xi)$-module, then ${\rm rad}P={\rm soc}^2P$ and ${\rm rad}^2 P={\rm soc}P$.
\end{corollary}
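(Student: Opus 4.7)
The plan is to reduce to the explicit modules $P(l,i)$ and then use a length/containment argument based on the socle and radical filtrations.

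First, by Corollary \ref{3.9} together with Lemma \ref{3.1}(1), every non-simple indecomposable projective $H_{mn}(\xi)$-module is isomorphic to some $P(l,i)$ with $1\<l\<n-1$ and $i\in\mathbb{Z}_{mn}$, since the $Z(\l_{i'j'})$ with $(i',j')\in I_0$ are already simple. So it suffices to establish both equalities for $P=P(l,i)$. From (the proof of) Proposition \ref{3.8} we may freely use: ${\rm rl}(P)=3$, $l(P)=4$, ${\rm soc}(P)=M\cong M(l,i)$ is simple of length $1$, ${\rm soc}^2(P)/{\rm soc}(P)\cong M(n-l,i+l-n)\oplus M(n-l,i+l)$, and $P/{\rm soc}^2(P)\cong M(l,i)$.

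Next I would handle ${\rm rad}^2(P)={\rm soc}(P)$. Since ${\rm rl}(P)=3$, we have ${\rm rad}^3(P)=0$, so $J\cdot{\rm rad}^2(P)=0$ and therefore ${\rm rad}^2(P)$ is semisimple. Hence ${\rm rad}^2(P)\subseteq{\rm soc}(P)$. But ${\rm soc}(P)$ is simple, so ${\rm rad}^2(P)$ is either $0$ or all of ${\rm soc}(P)$; the former is ruled out because ${\rm rl}(P)=3$ forces ${\rm rad}^2(P)\neq 0$. Therefore ${\rm rad}^2(P)={\rm soc}(P)$.

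For ${\rm rad}(P)={\rm soc}^2(P)$, the strategy is a containment plus a length count. Because $P/{\rm soc}^2(P)\cong M(l,i)$ is simple (hence semisimple), the defining property of the radical gives ${\rm rad}(P)\subseteq{\rm soc}^2(P)$. On the other hand, $P$ is projective indecomposable, so its top $P/{\rm rad}(P)$ is simple, giving $l({\rm rad}(P))=l(P)-1=3$. Since $l({\rm soc}^2(P))=1+2=3$ as well, the containment ${\rm rad}(P)\subseteq{\rm soc}^2(P)$ and the equality of composition lengths force ${\rm rad}(P)={\rm soc}^2(P)$.

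There is essentially no hard step here: everything substantive (the socle filtration and Loewy length) was already extracted in the proof of Proposition \ref{3.8}. The only point that requires a little care is recognizing that one must invoke ${\rm rl}(P)=3$ to rule out ${\rm rad}^2(P)=0$; after that the two equalities are immediate from the semisimplicity of ${\rm rad}^2(P)$ and a length comparison, respectively.
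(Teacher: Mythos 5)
Your proof is correct and follows the same route the paper intends: the paper simply cites Proposition \ref{3.8} and its proof, from which the socle filtration, ${\rm rl}(P)=3$, and $l(P)=4$ are extracted, and your length-and-containment arguments are exactly the natural way to spell out the omitted details.
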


Let $\{v_1,\cdots,v_{n}, u_1,u_2,\cdots,u_n\}$ be the standard basis of $P(l,i)$. By the proof of Proposition \ref{3.8}, $\{v_1,\cdots,v_{n-l}, u_1,u_2,\cdots,u_n\}$ is a basis of ${\rm rad}P(l,i)={\rm soc}^2P(l,i)$. Such a basis is called a standard basis of ${\rm rad}P(l,i)$ for later use. By Proposition \ref{3.3}, Lemma \ref{3.5}, Proposition \ref{3.8}, and \cite[Lemma 3.5]{Ch3}, one gets the following lemma.

\begin{lemma}\label{3.12}
Let $M$ be an indecomposable $H_{mn}(\xi)$-module. If ${\rm rl}(M)$=1, then either $M\cong M(l,i)$ for some $1\<l<n$ and $i\in\mathbb{Z}_{mn}$, or $M\cong Z(\l_{ij})$ for some $(i,j)\in I_0$. If ${\rm rl}(M)=3$, then $M\cong P(l,i)$ for some  $1\<l<n$ and $i\in\mathbb{Z}_{mn}$.
\end{lemma}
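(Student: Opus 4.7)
The plan is to treat the two cases separately, relying mainly on the structure of indecomposable projectives established in Proposition \ref{3.8} and Corollary \ref{3.11} together with the fact from Lemma \ref{3.5}(3) that $J^3=0$.

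For the case $\mathrm{rl}(M)=1$, the module $M$ is semisimple and indecomposable, hence simple. Then $M$ falls into one of the two families of Proposition \ref{3.3}, giving $M\cong M(l,i)$ or $M\cong Z(\l_{ij})$ for the appropriate parameters. This case is essentially immediate.

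For the case $\mathrm{rl}(M)=3$, the strategy is to show that $M$ must coincide with its own projective cover. Since $M$ is indecomposable, the top $M/JM$ is simple, so the projective cover $P(M)\twoheadrightarrow M$ is an indecomposable projective. By Corollary \ref{3.9}, $P(M)$ is isomorphic to either some $Z(\l_{i'j'})$ with $(i',j')\in I_0$ or some $P(l,i)$. The first possibility would make $P(M)$ simple by Lemma \ref{3.5}(1), which would force $M$ itself simple and contradict $\mathrm{rl}(M)=3$. Hence $P(M)\cong P(l,i)$ for some $1\<l<n$ and $i\in\mathbb{Z}_{mn}$.

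It remains to show that the surjection $\pi:P(l,i)\twoheadrightarrow M$ has trivial kernel $K$. Here the key input is that $\operatorname{soc} P(l,i)\cong M(l,i)$ is simple, proven in Proposition \ref{3.8}, and that $\operatorname{soc} P(l,i)=J^2P(l,i)$ by Corollary \ref{3.11}. If $K\neq 0$, then $\operatorname{soc} K$ is a nonzero submodule of $\operatorname{soc} P(l,i)$, so by simplicity $\operatorname{soc} K=\operatorname{soc} P(l,i)$. This means $J^2P(l,i)\subseteq K$, and therefore $J^2M=\pi(J^2P(l,i))=0$, contradicting $\mathrm{rl}(M)=3$. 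Hence $K=0$ and $M\cong P(l,i)$. The main delicate point — really the only non-trivial step — is this socle argument, but once one has the structural facts that $\operatorname{soc} P(l,i)$ is simple and equal to $J^2P(l,i)$, the conclusion follows cleanly. Alternatively, as the authors indicate, one may invoke \cite[Lemma 3.5]{Ch3}, where exactly the same argument is carried out in the analogous setting of Drinfeld doubles of Taft algebras.
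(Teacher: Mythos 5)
The first half of your argument (Loewy length $1$) is fine. The second half has a genuine gap at the very first step: the assertion that ``since $M$ is indecomposable, the top $M/JM$ is simple'' is false in general, and it is false for this algebra. Indeed, the paper itself produces indecomposable modules with non-simple top: $\Omega M(l,i)={\rm rad}\,P(l,i)$ is indecomposable of $(2,1)$-type, with top $M(n-l,i+l)\oplus M(n-l,i+l-n)$. So you cannot conclude from indecomposability alone that $P(M)$ is a single $P(l,i)$; the projective cover of an indecomposable module may well be a direct sum of several indecomposable projectives. The subsequent kernel computation, which you identify as the crux, is correct as written but is only applicable once $P(M)$ is known to be indecomposable -- so the actual substance of the lemma is precisely the step you assume.

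A repair in the same spirit works by passing to the injective hull rather than the projective cover, because there one does not need the socle to be simple. Since ${\rm rl}(M)=3$, $J^2M\neq 0$. The socle of $M$ cannot contain a copy of $Z(\lambda_{ij})$: such a summand is projective-injective, hence splits off, forcing $M\cong Z(\lambda_{ij})$ to be simple, contradicting ${\rm rl}(M)=3$. Thus ${\rm soc}\,M\cong\bigoplus_j M(l_j,i_j)$ and, by Corollary \ref{3.10}, $I(M)\cong\bigoplus_j P(l_j,i_j)$. Regard $M\subseteq I(M)$; then ${\rm soc}\,M={\rm soc}\,I(M)=\bigoplus_j {\rm soc}\,P(l_j,i_j)=\bigoplus_j J^2P(l_j,i_j)$ by Corollary \ref{3.11}. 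Let $\pi_j:I(M)\to P(l_j,i_j)$ be the $j$-th projection. Since $0\neq J^2M\subseteq{\rm soc}\,M$, some $\pi_j(J^2M)\neq 0$. Then $J^2\pi_j(M)\supseteq\pi_j(J^2M)\neq 0$, so $\pi_j(M)$ is a submodule of $P(l_j,i_j)$ of Loewy length $3$; but $P(l_j,i_j)$ has simple top, so every proper submodule lies in ${\rm rad}\,P(l_j,i_j)$, which has Loewy length $2$. Hence $\pi_j(M)=P(l_j,i_j)$, i.e.\ $\pi_j|_M:M\to P(l_j,i_j)$ is surjective onto a projective module, so it splits. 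Indecomposability of $M$ now forces ${\rm Ker}(\pi_j|_M)=0$, hence $M\cong P(l_j,i_j)$. In short: the split-epi onto $P(l_j,i_j)$ replaces your unsupported claim about the top, and the rest of your reasoning is consistent with this corrected route.
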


By Proposition \ref{3.9}, $\mathcal{P}=\{P(l,i),Z(\l_{i'j'})|1\<l\<n-1,i\in\mathbb{Z}_{mn},(i',j')\in I_0\}$ is a complete set of non-isomorphic indecomposable projective $H_{mn}(\xi)$-modules.
Let $\mathcal{P}_{l,i}=\{P(l,i+kn), P(n-l,i+l+kn)|0\<k\<m-1\}$ and $Q_{i',j'}=\{Z(\l_{i'j'})\}$, where $1\<l\<n-1$, $i\in\mathbb{Z}$ and $(i',j')\in I_0$.
Then $\mathcal{P}_{l,i}=\mathcal{P}_{l,i+n}$ and $\mathcal{P}_{l,i}=\mathcal{P}_{n-l,i+l}$ for all $1\<l\<n-1$ and $i\in\mathbb{Z}$. Thus, by Proposition \ref{3.3}, Corollary \ref{3.9} and the proof of Proposition \ref{3.8}, one gets the block partition of indecomposable projective $H_{mn}(\xi)$-modules below.

\begin{corollary}\label{3.13}
\begin{enumerate}
\item[(1)] If $n$ is odd then $\mathcal{P}=(\cup_{1\<l\<\frac{n-1}{2},0\<i\<n-1}\mathcal{P}_{l,i})
    \cup(\cup_{(i,j)\in I_0}Q_{i,j})$ is the block partition of indecomposable projective $H_{mn}(\xi)$-modules.
\item[(2)] If $n$ is even then
 $\mathcal{P}=(\cup_{1\<l\<\frac{n-2}{2},0\<i\<n-1}\mathcal{P}_{l,i})
\cup(\cup_{0\<i\<\frac{n-2}{2}}\mathcal{P}_{\frac{n}{2},i})\cup(\cup_{(i,j)\in I_0}Q_{i,j})$ is the block partition of indecomposable projective $H_{mn}(\xi)$-modules.

\end{enumerate}
\end{corollary}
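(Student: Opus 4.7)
My plan is to invoke the general principle that the blocks of a finite-dimensional algebra partition the indecomposable projectives into equivalence classes generated by the relation ``$P$ and $P'$ share a simple composition factor,'' and then read the partition off from Proposition \ref{3.8} and Corollary \ref{3.9}. By Corollary \ref{3.9}, two kinds of indecomposable projectives appear in $\mathcal{P}$: the non-simple modules $P(l,i)$ and the simple-projective modules $Z(\l_{ij})$.

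I would dispose of the simple-projective case first. Each $Z(\l_{ij})$ with $(i,j)\in I_0$ is simultaneously simple (Proposition \ref{3.3}) and projective (Lemma \ref{3.5}(1)), so its unique composition factor is itself; moreover, by Proposition \ref{3.3} it is not isomorphic to any $M(l',i')$ appearing as a composition factor of some $P(l,i)$. Hence each $\{Z(\l_{ij})\}$ is a block on its own, accounting for the singletons $Q_{i,j}$ in the stated partition.

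For the non-simple projectives, the proof of Proposition \ref{3.8} gives the radical layers of $P(l,i)$ as $M(l,i)$ on top, $M(n-l,i+l-n)\oplus M(n-l,i+l)$ in the middle, and $M(l,i)$ at the bottom. Thus $P(l,i)$ is linked (through shared composition factors) to $P(n-l,i+l-n)$ and $P(n-l,i+l)$; applying the same analysis at each of these, and using Lemma \ref{3.6}(6,7) to evaluate indices, propagates the equivalence to $P(l,i\pm n)$. Iterating shows that the block containing $P(l,i)$ is exactly $\{P(l,i+kn), P(n-l,i+l+kn): k\in\mathbb{Z}\}$, which reduced modulo $mn$ is precisely $\mathcal{P}_{l,i}$; since this set is closed under the two-neighbour relation, it forms a single block.

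It remains to organise the non-simple blocks into the disjoint unions stated, using the already-recorded invariances $\mathcal{P}_{l,i}=\mathcal{P}_{l,i+n}=\mathcal{P}_{n-l,i+l}$. For $n$ odd, the involution $l\mapsto n-l$ on $\{1,\dots,n-1\}$ is fixed-point-free, so $1\<l\<(n-1)/2$ is a transversal and $0\<i\<n-1$ provides one representative for $i$ modulo $n$. For $n$ even, the same works for $1\<l\<(n-2)/2$; the remaining value $l=n/2$ is fixed by $l\mapsto n-l$, and the second invariance then collapses to $\mathcal{P}_{n/2,i}=\mathcal{P}_{n/2,i+n/2}$, forcing the further restriction $0\<i\<(n-2)/2$. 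Exhaustiveness is immediate from Corollary \ref{3.9}. The main delicate point is the even-$l=n/2$ bookkeeping: one must check that combining $i\sim i+n$ and $i\sim i+n/2$ produces $n/2$ orbits in $\mathbb{Z}_{mn}$, each of size $2m$, represented exactly by $\{0,1,\dots,(n-2)/2\}$, which is a routine index-chasing verification.
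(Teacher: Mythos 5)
Your argument is correct and is essentially the same route the paper (very tersely) indicates: read the composition factors of $P(l,i)$ off the proof of Proposition~\ref{3.8}, propagate the linkage relation, observe that each $\mathcal{P}_{l,i}$ is closed under it and hence is one block, dispose of the simple projectives $Z(\l_{ij})$ as singleton blocks via Proposition~\ref{3.3} and Lemma~\ref{3.5}(1), and then use the invariances $\mathcal{P}_{l,i}=\mathcal{P}_{l,i+n}=\mathcal{P}_{n-l,i+l}$ to select the transversal, treating $l=n/2$ separately when $n$ is even. Your bookkeeping in the $l=n/2$ case (orbits of $\langle n/2\rangle$ in $\mathbb{Z}_{mn}$ have size $2m$, giving $n/2$ blocks represented by $0,\dots,(n-2)/2$) is accurate, and the block counts reconcile with the total $(n-1)mn/(2m)=n(n-1)/2$ in both parities.
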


\section{\bf Indecomposable $H_{mn}(\xi)$-modules with Loewy length 2}\label{s4}

In this section, we investigate the non-simple non-projective indecomposable $H_{mn}(\xi)$-modules. Such indecomposable modules have Loewy length 2.

\begin{proposition}\label{4.1}
Let $M$ be an indecomposable $H_{mn}(\xi)$-module with ${\rm rl}(M)=2$. Then there are  integers $1\<l\<n-1$ and $0\<i\<n-1$ such that ${\rm soc}M\cong\oplus_{k=0}^{m-1}s_kM(l,i+kn)$ and $I(M)\cong\oplus_{k=0}^{m-1}s_kP(l,i+kn)$ for some $s_k\in \mathbb{N}$, $0\<k\<m-1$.
\end{proposition}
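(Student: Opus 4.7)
The plan is first to constrain ${\rm soc}(M)$ to lie in a single block and contain only ``non-projective'' simples, then to exploit the bipartite Ext-structure inside that block to force all socle summands to be of a single ``type''.

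Step 1 (block and non-projective socle summands). Since $H_{mn}(\xi)$ is symmetric, the simple projectives $Z(\l_{i'j'})$ of Lemma \ref{3.5}(1) are also injective, so any such summand of ${\rm soc}(M)$ would split off from $M$, contradicting indecomposability (as $M\neq Z(\l_{i'j'})$ by comparison of Loewy lengths). Hence by Proposition \ref{3.3} every simple summand of ${\rm soc}(M)$ is some $M(l_\a,i_\a)$, and Corollary \ref{3.10} gives $I(M)\cong\oplus_\a P(l_\a,i_\a)$. Indecomposability of $M$ forces $I(M)$ into a single block, which by Corollary \ref{3.13} is $\mathcal{P}_{l,i}$ for some $1\<l\<n-1$ and $0\<i\<n-1$. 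I will call each simple $M(l,i+kn)$ of type $A$ and each $M(n-l,i+l+kn)$ of type $B$ ($0\<k\<m-1$).

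Step 2 (only one type appears in ${\rm soc}(M)$). Write ${\rm soc}(M)=N_A\oplus N_B$ by type. Since $M$ is indecomposable and non-semisimple (because ${\rm rl}(M)=2$), any simple summand of $M$ not contained in ${\rm rad}(M)$ would split off, so ${\rm rad}(M)={\rm soc}(M)$ and hence ${\rm top}(M)=M/{\rm soc}(M)$ is semisimple; split it by type as $T_A\oplus T_B$. By Corollary \ref{3.11}, for each non-simple indecomposable projective $P\in\mathcal{P}_{l,i}$ the middle Loewy layer equals ${\rm soc}^2(P)/{\rm soc}(P)$, and the proof of Proposition \ref{3.8} computes this to consist only of simples of the opposite type. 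Consequently ${\rm Ext}^1(S,S')=0$ whenever $S,S'$ are same-type simples, so the extension class of
\begin{equation*}
0\to N_A\oplus N_B\to M\to T_A\oplus T_B\to 0
\end{equation*}
lies in ${\rm Ext}^1(T_A,N_B)\oplus{\rm Ext}^1(T_B,N_A)$. A standard Baer-sum argument then realizes $M\cong E_1\oplus E_2$, where $E_1$ is the extension of $T_A$ by $N_B$ and $E_2$ the extension of $T_B$ by $N_A$. Indecomposability of $M$ forces $E_1=0$ or $E_2=0$, yielding $N_B=0$ or $N_A=0$ respectively.

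Step 3 (conclude). If $N_B=0$ we obtain ${\rm soc}(M)\cong\oplus_{k=0}^{m-1}s_kM(l,i+kn)$ directly. If instead $N_A=0$, replacing $(l,i)$ by $(n-l,(i+l)\bmod n)$ (still in the prescribed range) and re-indexing $k$ to absorb the carry puts ${\rm soc}(M)$ into the same form. Finally $I(M)\cong\oplus_{k=0}^{m-1}s_kP(l,i+kn)$ follows termwise from Corollary \ref{3.10}.

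The main obstacle is the Baer-sum step in Step 2: translating the vanishing of the same-type ${\rm Ext}^1$'s into an honest direct-sum decomposition of $M$. This relies on both socle and top of $M$ being semisimple, which makes ${\rm Ext}^1(T_A\oplus T_B, N_A\oplus N_B)$ split canonically into four components, two of which vanish, producing a decomposition of $M$ compatible with the type partition.
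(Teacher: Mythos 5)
Your proof is correct, and it takes a genuinely different route from the paper's. The paper embeds $M$ in its injective envelope $I(M)=P_1\oplus P_2$ (split by type), sets $M_1=M\cap P_1$, $M_2=M\cap P_2$, and then shows by a direct computation inside ${\rm rad}I(M)$ that the preimage of every simple constituent of $M/{\rm soc}M$ already lands in $M_1\oplus M_2$, concluding $M=M_1\oplus M_2$ and hence $M_1=0$ or $M_2=0$. You instead read off from Corollary~\ref{3.11} and the proof of Proposition~\ref{3.8} that the ${\rm Ext}^1$-quiver of the block $\mathcal{P}_{l,i}$ is bipartite between the two families $\{M(l,i+kn)\}$ and $\{M(n-l,i+l+kn)\}$, and then use the canonical direct-sum decomposition of ${\rm Ext}^1(T_A\oplus T_B,\,N_A\oplus N_B)$ together with additivity of Baer sum to realize $M$ as a direct sum of a ``type $A$ on top, type $B$ below'' extension and a ``type $B$ on top, type $A$ below'' extension; indecomposability then kills one of the two. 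Both arguments hinge on the same structural input (opposite-type middle Loewy layers of the projectives in the block), but your version repackages it homologically, avoiding any explicit manipulation inside $I(M)$. One small point worth tightening: the justification that ${\rm rad}(M)={\rm soc}(M)$ should be phrased in terms of simple submodules (not summands) of $M$: if $S\subseteq{\rm soc}(M)$ is simple with $S\cap{\rm rad}(M)=0$, lifting a complement of its image in $M/{\rm rad}(M)$ produces a splitting $M=S\oplus N$ with $N\supseteq{\rm rad}(M)\neq 0$; the paper defers this to a cited lemma. Also note that the type decompositions $N_A\oplus N_B$ and $T_A\oplus T_B$ are canonical (sums of isotypical components) precisely because Lemma~\ref{3.2}(3) makes the two families disjoint, which is what legitimizes the four-way splitting of ${\rm Ext}^1$.
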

\begin{proof}
Since $M$ is an indecomposable $H_{mn}(\xi)$-module with ${\rm rl}(M)=2$, it follows from Corollary \ref{3.13} that there are integers $1\<l\<n-1$ and $0\<i\<n-1$ such that $M$ is an indecomposable module over the block $\mathcal{P}_{l,i}$ of $H_{mn}(\xi)$. Since $H_{mn}(\xi)$ is symmetric, $I(M)$ is projective. Hence $I(M)=P_1\oplus P_2$, where
$P_1\cong\oplus_{k=0}^{m-1}s_kP(l,i+kn)$ and $P_2\cong\oplus_{k=0}^{m-1}t_kP(n-l,i+l+kn)$
for some $s_k, t_k\in\mathbb N$, $0\<k\<m-1$. Regarding $M\subseteq I(M)$,
and putting $M_1=M\cap P_1$ and $M_2=M\cap P_2$.
Since $I(M)$ is an injective envelope of $M$, ${\rm soc}M={\rm soc}I(M)={\rm soc}P_1\oplus{\rm soc}P_2$. Hence ${\rm soc}M\subset M_1\oplus M_2$. Since rl$(M)=2$, $M/{\rm soc}M$ is a semisimple submodule of $I(M)/{\rm soc}I(M)$, and so $M/{\rm soc}M\subseteq{\rm soc}(I(M)/{\rm soc}I(M))={\rm soc}^2I(M)/{\rm soc}I(M)$. By Corollary \ref{3.11}, ${\rm soc}I(M)={\rm rad}^2I(M)={\rm rad}^2P_1\oplus{\rm rad}^2P_2$ and ${\rm soc}^2I(M)={\rm rad}I(M)={\rm rad}P_1\oplus{\rm rad}P_2$. Hence $M\subseteq{\rm rad}I(M)$.
Let $\pi:I(M)\rightarrow I(M)/{\rm soc} I(M)$ be the canonical epimorphism. Then $I(M)/{\rm soc}I(M)=I(M)/{\rm rad}^2I(M)=\pi(P_1)\oplus\pi(P_2)$ and $M/{\rm soc}M\subseteq{\rm rad}I(M)/{\rm rad}^2I(M)=\pi({\rm rad}P_1)\oplus\pi({\rm rad}P_2)$. By the proof of Proposition \ref{3.8}, one gets
$$\begin{array}{rcl}
\pi({\rm rad}P_1)&=&({\rm rad}P_1+{\rm rad}^2I(M))/{\rm rad}^2(I(M))\\
&\cong&{\rm rad}P_1/{\rm rad}^2P_1\cong\oplus_{k=0}^{m-1}(s_k+s_{k+1})M(n-l,i+l+kn),\\
\pi({\rm rad}P_2)&=&({\rm rad}P_2+{\rm rad}^2I(M))/{\rm rad}^2(I(M))\\
&\cong&{\rm rad}P_2/{\rm rad}^2P_2\cong\oplus_{k=0}^{m-1}(t_{k-1}+t_k)M(l,i+kn),\\
\end{array}$$
where $s_m=s_0$ and $t_{-1}=t_{m-1}$. By Proposition \ref{3.2}(3), for any simple submodule $X$ of $M/{\rm soc}M$, either $X\subseteq\pi({\rm rad}P_1)$ or $X\subseteq\pi({\rm rad}P_2)$. If $X\subseteq\pi({\rm rad}P_1)$ then $\pi^{-1}(X)\subseteq M\cap({\rm rad}P_1+{\rm rad}^2I(M))=M\cap({\rm rad}P_1\oplus{\rm rad}^2P_2)=(M\cap{\rm rad}P_1)\oplus{\rm rad}^2P_2\subseteq M_1\oplus M_2$. Similarly, if
$X\subseteq\pi({\rm rad}P_2)$ then $\pi^{-1}(X)\subseteq M_1\oplus M_2$.
It follows that $M\subseteq M_1\oplus M_2$, and so $M=M_1\oplus M_2$. However, $M$ is indecomposable, we have $M_1=0$ or $M_2=0$, and consequently $P_1=0$ or $P_2=0$.
Thus either $I(M)\cong\oplus_{k=0}^{m-1}s_kP(l,i+kn)$ and ${\rm soc}M\cong\oplus_{k=0}^{m-1}s_kM(l,i+kn)$, or $I(M)\cong\oplus_{k=0}^{m-1}t_kP(n-l,i+l+kn)$
and ${\rm soc}M\cong\oplus_{k=0}^{m-1}t_kM(n-l,i+l+kn)$. This completes the proof.
\end{proof}

Recall the syzygy functor  $\Omega$ and the cosyzygy functor $\Omega^{-1}$. Let $M$ be an $H_{mn}(\xi)$-module. Choose a fixed projective cover $f: P(M)\ra M$ and define the syzygy $\Omega M$ to be ${\rm Ker}f$, called the syzygy of $M$. Choose an injective envelope $f: M\ra I(M)$ and define the cosyzygy $\Omega^{-1}M$ to be ${\rm Coker}f$.
If $M$ has no nonzero projective (injective) direct summands, then neither do $\Omega M$ and $\Omega^{-1}M$, and $\Omega\Omega^{-1}M\cong M\cong\Omega^{-1}\Omega M$. Moreover, $M$ is indecomposable if and only if $\Omega M$ is indecomposable. If $N$ is also an $H_{mn}(\xi)$-module without nonzero projective (injective) direct summands, then $M\cong N$ if and only if $\Omega M\cong\Omega N$, cf. \cite[p.126]{ARS}.

Let $M$ be an indecomposable $H_{mn}(\xi)$-module with ${\rm rl}(M)=2$. By \cite[Lemma 3.7]{Ch3},
${\rm soc}M={\rm rad}M$. If $l({\rm soc}M)=t$ and $l(M/\text {soc}M)=s$, then we say that $M$ is of $(s,t)$-type (cf. \cite{Ch3}).

\begin{lemma}\label{4.2}
Let $M$ be an indecomposable $H_{mn}(\xi)$-module with ${\rm rl}(M)=2$.
\begin{enumerate}
\item[(1)] $M$ is of $(1,2)$-type $\Leftrightarrow$ $M\cong \Omega^{-1} M(l,i)$ for some $1\<l\<n-1$ and $i\in \mathbb{Z}_{mn}$.
\item[(2)] $M$ is of $(2,1)$-type $\Leftrightarrow$ $M\cong \Omega M(l,i)$ for some $1\<l\<n-1$ and $i\in \mathbb{Z}_{mn}$.
\end{enumerate}
\end{lemma}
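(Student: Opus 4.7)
My plan combines the explicit structure of the projectives $P(l,i)$ from Proposition~\ref{3.8} with Corollary~\ref{3.11} (${\rm rad}\,P(l,i)={\rm soc}^2P(l,i)$ and ${\rm rad}^2P(l,i)={\rm soc}\,P(l,i)\cong M(l,i)$), Corollary~\ref{3.10} ($I(M(l,i))=P(l,i)$), and a short length count based on $l(P(l,i))=4$ (Lemma~\ref{3.5}(2)). The backward implications are direct computations inside $P(l,i)$, while the forward implications rest on identifying the simple top (for (1)) or simple socle (for (2)) of $M$ as some $M(l,i)$; the key point is to exclude that this simple factor is of the form $Z(\lambda_{i,j})$, which is ruled out because each such module is itself simple-projective by Lemma~\ref{3.5}(1).

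For the backward implications, I would simply unwind the definition of $\Omega^{\pm 1}$ on $M(l,i)$ and read off the Loewy type. For (1), $\Omega^{-1}M(l,i)=I(M(l,i))/M(l,i)=P(l,i)/{\rm soc}\,P(l,i)$ has radical ${\rm soc}^2P(l,i)/{\rm soc}\,P(l,i)$, which is semisimple of length $2$ by Corollary~\ref{3.11} and the proof of Proposition~\ref{3.8}, and simple top $P(l,i)/{\rm soc}^2P(l,i)\cong M(l,i)$; the module is indecomposable because its top is simple, and is of $(1,2)$-type. Dually, for (2), $\Omega M(l,i)={\rm rad}\,P(l,i)={\rm soc}^2P(l,i)$ has simple socle $M(l,i)$ and semisimple top of length $2$, and is indecomposable because any nonzero submodule must contain its simple socle.

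For the forward implication of (1), the top $S:=M/{\rm soc}\,M$ is simple; Lemma~\ref{3.5}(1) rules out $S\cong Z(\lambda_{i,j})$ (else $P(M)=Z(\lambda_{i,j})$ is simple, forcing $M=S$ and contradicting ${\rm rl}(M)=2$), so $S\cong M(l,i)$ for some $1\leqslant l\leqslant n-1$ and $i\in\mathbb Z_{mn}$, and $P(M)=P(l,i)$. Applying $0\to\Omega M\to P(l,i)\to M\to 0$ with $l(P(l,i))=4$ and $l(M)=3$ gives $l(\Omega M)=1$; since $\Omega M\subseteq P(l,i)$ has nonzero socle contained in the simple module ${\rm soc}\,P(l,i)\cong M(l,i)$, one concludes $\Omega M\cong M(l,i)$ and hence $M\cong\Omega^{-1}M(l,i)$. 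The forward implication of (2) is dual: ${\rm soc}\,M$ is simple and must be some $M(l,i)$ (same exclusion of $Z(\lambda_{i,j})$, using $I(M)$ in place of $P(M)$), so $M\hookrightarrow I(M)=P(l,i)$ with ${\rm soc}\,M={\rm soc}\,P(l,i)$; now $M/{\rm soc}\,M$ is semisimple of length $2$ and must sit inside ${\rm soc}(P(l,i)/{\rm soc}\,P(l,i))={\rm soc}^2P(l,i)/{\rm soc}\,P(l,i)$, which also has length $2$, forcing $M={\rm soc}^2P(l,i)={\rm rad}\,P(l,i)=\Omega M(l,i)$. The only mildly delicate step is the exclusion of the $Z(\lambda_{i,j})$ case for the simple top or socle; everything else is a length count inside the four-step filtration of $P(l,i)$ already recorded in Proposition~\ref{3.8}.
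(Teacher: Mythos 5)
Your proof is correct and follows essentially the same route as the paper's: for the forward direction of (1) the paper also identifies $M/{\rm rad}\,M\cong M(l,i)$, maps $P(l,i)$ onto $M$, and uses $l(P(l,i))=4$, $l(M)=3$ to force ${\rm Ker}f={\rm soc}\,P(l,i)$, and (2) is handled dually. The one place where you add something the paper leaves implicit is the justification that the simple top (resp.\ socle) cannot be some $Z(\lambda_{i'j'})$: you argue directly from the fact that $Z(\lambda_{i'j'})$ is simple projective, whereas the paper tacitly appeals to Proposition~\ref{4.1} (already proved), which puts $M$ in a block $\mathcal{P}_{l,i}$ all of whose simple modules are of the form $M(\cdot,\cdot)$; your argument is a shorter, self-contained substitute and is correct.
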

\begin{proof}
(1) By Corollary \ref{3.10} and the proof of Proposition \ref{3.8}, if $M\cong\Omega^{-1}M(l,i)$ for some $1\<l\<n-1$ and $i\in \mathbb{Z}_{mn}$,  then $M$ is of $(1,2)$-type. Conversely, let $M$ is of $(1,2)$-type. Then
$M/{\rm rad}M=M/{\rm soc}M\cong M(l,i)$ for some $1\<l\<n-1$ and $i\in \mathbb{Z}_{mn}$.
It follows from Corollary \ref{3.10} that there is an $H_{mn}(\xi)$-module epimorphism $f: P(l,i)\ra M$. Since $l(M)=3$ and $l(P(l,i))=4$, $l({\rm Ker}f)=1$. Hence ${\rm Ker}f={\rm soc}P(l,i)\cong M(l,i)$, and so $M\cong P(l,i)/{\rm Ker}f=P(l,i)/{\rm soc}P(l,i)\cong\Omega^{-1}M(l,i)$ by Corollary \ref{3.10}.

(2) It is similar to (1) or dual to (1).
\end{proof}

\begin{lemma}\label{4.3}
Let $M$ be of $(s,t)$-type.
\begin{enumerate}
\item[(1)] $s\<2t$ and $t\<2s$.
\item[(2)] If $s\neq 1$, then $t<2s$ and $\Omega M$ is of $(2s-t,s)$-type.
\item[(3)] If $t\neq 1$, then $s<2t$ and $\Omega^{-1} M$ is of $(t,2t-s)$-type.
\item[(4)] If $s=t$, then both $\Omega M$ and $\Omega^{-1} M$ are of $(t,t)$-type.
\end{enumerate}
\end{lemma}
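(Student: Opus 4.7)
The plan is to determine the Loewy structure of $\Omega M$ (and, dually, $\Omega^{-1}M$) explicitly and then read off parts (1)--(4). First, I would use Proposition \ref{4.1} to fix notation: $\mathrm{soc}(M) \cong \oplus_{k=0}^{m-1} s_k M(l, i+kn)$ and $I(M) \cong \oplus_{k=0}^{m-1} s_k P(l, i+kn)$, so $t = \sum_k s_k$. Since $\mathrm{soc}(M) = \mathrm{soc}(I(M))$ and $M/\mathrm{soc}(M)$ is semisimple, $M$ sits inside $\mathrm{rad}(I(M)) = \mathrm{soc}^2(I(M))$ by Corollary \ref{3.11}, and $M/\mathrm{soc}(M)$ lands in $\mathrm{rad}(I(M))/\mathrm{soc}(I(M)) \cong \oplus_k (s_k + s_{k+1}) M(n-l, i+l+kn)$ (as computed in the proof of Proposition \ref{4.1}). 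Hence $M/\mathrm{soc}(M) \cong \oplus_k r_k M(n-l, i+l+kn)$ with $\sum_k r_k = s$, and consequently $P(M) \cong \oplus_k r_k P(n-l, i+l+kn)$ has length $4s$; this gives $l(\Omega M) = 3s - t$, and dually $l(\Omega^{-1}M) = 3t - s$.

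The key observation is that $\mathrm{soc}(P(M)) \cong \oplus_k r_k M(n-l, i+l+kn)$ consists of simples that are never isomorphic to any socle simple of $M$, by Lemma \ref{3.2}(3). Therefore the projective cover $P(M) \to M$ carries $\mathrm{soc}(P(M))$ to a semisimple submodule of $\mathrm{soc}(M)$ that shares no composition factor with $\mathrm{soc}(P(M))$, hence to zero. Thus $\mathrm{soc}(P(M)) \subseteq \Omega M$, so $\mathrm{soc}(\Omega M) = \mathrm{soc}(P(M))$ has length $s$. Since $\Omega M \subseteq \mathrm{rad}(P(M)) = \mathrm{soc}^2(P(M))$ by Corollary \ref{3.11}, we conclude $\mathrm{rl}(\Omega M) \leq 2$ and $l(\Omega M / \mathrm{soc}(\Omega M)) = 2s - t$. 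A completely parallel computation for $\Omega^{-1}M = I(M)/M$ (using $\mathrm{soc}(M) = \mathrm{soc}(I(M))$ and $M \subseteq \mathrm{rad}(I(M))$) gives $\mathrm{rl}(\Omega^{-1}M) \leq 2$ with top of length $t$ and socle of length $2t - s$.

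With these ingredients, parts (1)--(4) fall out mechanically. Part (1) is immediate from $2s - t \geq 0$ and $2t - s \geq 0$. For (2), assume $s \neq 1$; if $t = 2s$ then $\Omega M$ would be semisimple of length $s$, and since $\Omega M$ is indecomposable (because $M$ is indecomposable and non-projective, cf.\ \cite[p.126]{ARS}), $\Omega M$ would be simple, forcing $s = 1$---contradiction. Hence $t < 2s$, $\mathrm{rl}(\Omega M) = 2$, and \cite[Lemma 3.7]{Ch3} gives $\mathrm{rad}(\Omega M) = \mathrm{soc}(\Omega M)$, so $\Omega M$ is of $(2s-t, s)$-type. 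Part (3) is dual. For (4) with $s = t \geq 1$, $\Omega M$ has length $2t$ and socle of length $t$, which is nonzero and proper, so $\mathrm{rl}(\Omega M) = 2$; the same reasoning as in (2) makes $\Omega M$ of $(t, t)$-type, and dually $\Omega^{-1}M$ is of $(t, t)$-type.

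The main obstacle I anticipate is the bookkeeping in the first paragraph---correctly matching the simple summands of $M/\mathrm{soc}(M)$ with $M(n-l, i+l+kn)$, and recording that these never coincide with the socle types $M(l, i+kn)$ (Lemma \ref{3.2}(3) is tailor-made for this). Once the identification $\mathrm{soc}(\Omega M) = \mathrm{soc}(P(M))$ is established, the length counts are automatic, and the only remaining subtlety is the $t = 2s$ obstruction, which is dispatched by indecomposability alone.
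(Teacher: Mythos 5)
Your proposal is correct and takes essentially the same approach as the paper: both determine the Loewy structure of $\Omega M$ and $\Omega^{-1}M$ via the projective cover $P(M)\cong\oplus_k r_kP(n-l,i+l+kn)$ and the injective envelope $I(M)\cong\oplus_k t_kP(l,i+kn)$, using Corollary~\ref{3.11} (rad $=$ soc$^2$, rad$^2$ $=$ soc in the projectives) and then read off the length counts. Your version is a bit more uniform (you compute the full Loewy picture of $\Omega M$ once and derive (1)--(4) from it, whereas the paper's argument for (1) is somewhat ad hoc), and your exclusion of $t=2s$ in (2) — showing $\Omega M$ would be nonzero semisimple and indecomposable, hence simple, forcing $s=1$ — is an equivalent but slightly more compact replacement for the paper's observation that $\overline{f}\colon P/\mathrm{rad}^2P\to M$ would be an isomorphism onto a decomposable module. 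Your route to $\mathrm{soc}(P(M))\subseteq\Omega M$ via Lemma~\ref{3.2}(3) is valid but more roundabout than the paper's (which simply notes that rad$^2P(M)$ maps into rad$^2M=0$). One small imprecision: the unconditional claim that $\mathrm{soc}(\Omega^{-1}M)$ has length $2t-s$ fails in the degenerate case $s=2t$, where $\Omega^{-1}M$ is semisimple of length $t$; the correct clean way to get $s\leqslant 2t$ for part (1) is from $l(\Omega^{-1}M)=3t-s\geqslant l(\Omega^{-1}M/\mathrm{rad}\,\Omega^{-1}M)=t$. This does not affect parts (3) and (4), where the hypotheses force $\Omega^{-1}M$ to be non-semisimple, so your structural claim holds there.
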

\begin{proof}
By Proposition \ref{4.1}, there are integers $1\<l\<n-1$ and $0\<i\<n-1$ such that ${\rm soc}M\cong\oplus_{k=0}^{m-1}t_kM(l,i+kn)$ and $I(M)\cong\oplus_{k=0}^{m-1}t_kP(l,i+kn)$ for some $t_k\in \mathbb{N}$, $0\<k\<m-1$.
Moreover, $\sum_{k=0}^{m-1}t_k=t$.

(1) By the proof of Proposition \ref{4.1}, $M/{\rm soc}M\subseteq{\rm rad}I(M)/{\rm rad}^2I(M)$ and
$${\rm rad}I(M)/{\rm rad}^2I(M)\cong\oplus_{k=0}^{m-1}(t_k+t_{k+1})M(n-l,i+l+kn),$$
where $t_m=t_0$. Hence $s=l(M/{\rm soc}M)\<l({\rm rad}I(M)/{\rm rad}^2I(M))=2t$.
Furthermore, we have $M/{\rm rad}M=M/{\rm soc}M\cong\oplus_{k=0}^{m-1}s_kM(n-l,i+l+kn)$ for some $s_k\in \mathbb{N}$, $0\<k\<m-1$, with $\sum_{k=0}^{m-1}s_k=s$.
Hence $P=\oplus_{k=0}^{m-1}s_kP(n-l,i+l+kn)$ is a projective cover of $M$. Thus, there is an epimorphism $f: {\rm rad}P\rightarrow{\rm rad}M$. Since ${\rm rad}M={\rm soc}M$ is semisimple, ${\rm rad}^2P\subseteq{\rm Ker}f$, and hence $l({\rm soc}M)\<l({\rm rad}P/{\rm rad}^2P)=2s$, and so $t\<2s$.

(2) Assume $s\neq1$. By the proof of $(1)$, $f: P\rightarrow M$ is a projective cover with ${\rm soc}P={\rm rad}^2P\subseteq{\rm Ker}f$. Hence $f$ induces an epimorphism
$\ol{f}: P/{\rm rad}^2P\rightarrow M$. If $t=2s$, then $l(M)=3s=l(P/\text{soc}P)$, and hence $\ol{f}$ is an isomorphism. However, $P/\text{rad}^2P$ is not indecomposable by $s>1$, a contradiction. Therefore, $t<2s$. Meanwhile, $\Omega M\cong {\rm Ker}f\subseteq{\rm rad}P$, ${\rm soc}P={\rm rad}^2P\subset{\rm Ker}f$ and ${\rm rad}^2P\neq{\rm Ker}f$. It follows that ${\rm rl}(\Omega M)=2$ and $l(\text{soc}(\Omega M))=l(\text{soc}P)=s$. But $l(\Omega M)=l(P)-l(M)=3s-t$ and so $l(\Omega M/\text{soc}(\Omega M))=2s-t$. That is, $\Omega M$ is of $(2s-t,s)$-type.

(3) It is dual to (2).

(4) It is clear.
\end{proof}

\begin{corollary}\label{4.4}
Let $M$ be of $(s,t)$-type with ${\rm soc}M\cong\oplus_{k=0}^{m-1}t_kM(l,i+kn)$ for some integers $1\<l\<n-1$, $0\<i\<n-1$ and $t_k\>0$.
\begin{enumerate}
\item[(1)] $M/{\rm soc}M\cong\oplus_{k=0}^{m-1}s_kM(n-l,i+l+kn)$ for some integers $s_k\>0$.
\item[(2)] ${\rm Hom}_{H_{mn}(\xi)}(M, {\rm soc}M)={\rm Hom}_{H_{mn}(\xi)}(M/{\rm soc}M, {\rm soc}M)=0$.
\end{enumerate}
\end{corollary}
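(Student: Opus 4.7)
The plan is to deduce both parts directly from Proposition~\ref{4.1} together with the simple-factor computation in the proof of Proposition~\ref{3.8} and the non-isomorphism statement Lemma~\ref{3.2}(3). Part (1) is a socle/radical filtration argument on $I(M)$, and part (2) will follow formally from (1) via Schur's lemma.

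For (1), I proceed as in the proof of Lemma~\ref{4.3}. Since ${\rm rl}(M)=2$, the quotient $M/{\rm soc}M$ is semisimple and, via the inclusion $M\subseteq I(M)$, embeds in $I(M)/{\rm soc}I(M)$. The hypothesis on ${\rm soc}M$ and Proposition~\ref{4.1} give $I(M)\cong\oplus_{k=0}^{m-1}t_kP(l,i+kn)$. By Corollary~\ref{3.11}, ${\rm soc}(I(M)/{\rm soc}I(M))={\rm rad}I(M)/{\rm rad}^2I(M)$, and the socle filtration computed in the proof of Proposition~\ref{3.8}, combined with the periodicity Lemma~\ref{3.6}(7), yields
$$
{\rm rad}P(l,i+kn)/{\rm rad}^2P(l,i+kn)\cong 2M(n-l,i+l+kn).
$$
Hence every simple constituent of ${\rm rad}I(M)/{\rm rad}^2I(M)$, and thus of its submodule $M/{\rm soc}M$, is of the form $M(n-l,i+l+kn)$. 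Reading off multiplicities gives the required decomposition $M/{\rm soc}M\cong\oplus_{k=0}^{m-1}s_kM(n-l,i+l+kn)$.

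For (2), note first that ${\rm soc}M$ is semisimple, so any $H_{mn}(\xi)$-linear map $f\colon M\to{\rm soc}M$ vanishes on ${\rm rad}M={\rm soc}M$ and therefore factors through the canonical projection $M\to M/{\rm soc}M$. This yields a natural isomorphism
$$
{\rm Hom}_{H_{mn}(\xi)}(M,{\rm soc}M)\cong{\rm Hom}_{H_{mn}(\xi)}(M/{\rm soc}M,{\rm soc}M),
$$
so it suffices to show the right-hand side vanishes. But by (1) and the hypothesis the simple summands of $M/{\rm soc}M$ are of type $M(n-l,i+l+k'n)$ while those of ${\rm soc}M$ are of type $M(l,i+kn)$; Lemma~\ref{3.2}(3) says these simples are pairwise non-isomorphic, so Schur's lemma applied summand by summand forces every such homomorphism to be zero.

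The only nontrivial input is (1), and within (1) the one delicate bookkeeping step is identifying both simple summands of ${\rm rad}P(l,i+kn)/{\rm rad}^2P(l,i+kn)$; the proof of Proposition~\ref{3.8} presents them as $M_1\cong M(n-l,i+l-n+kn)$ and $M_2\cong M(n-l,i+l+kn)$, which become isomorphic only after applying the period-$n$ shift in the second index provided by Lemma~\ref{3.2}(2) (equivalently Lemma~\ref{3.6}(7)). Once this identification is made, everything else reduces to collecting multiplicities and invoking Schur's lemma, and I expect no further obstacle.
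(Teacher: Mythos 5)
Your overall route is the same as the paper's: part (1) is read off from the radical filtration of $I(M)$ computed in the proof of Proposition~\ref{4.1}, and part (2) follows by factoring through $M/{\rm soc}M$ and applying Lemma~\ref{3.2}(3) with Schur's lemma. Part (2) as you wrote it is exactly the paper's argument and is fine.

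However, the one step you yourself single out as delicate is handled incorrectly. You claim ${\rm rad}P(l,i+kn)/{\rm rad}^2P(l,i+kn)\cong 2M(n-l,i+l+kn)$ on the grounds that $M_1\cong M(n-l,i+l-n+kn)$ and $M_2\cong M(n-l,i+l+kn)$ ``become isomorphic'' under a period-$n$ shift of the second index. They do not: Lemma~\ref{3.2}(2) requires the second indices to be congruent modulo $mn$, and since $m>1$ a shift by $n$ is not a shift by $mn$. (Lemma~\ref{3.6}(7) only says the structure constants $\gamma_k,\alpha_k$ are $n$-periodic in $i$; the $c$-action on $M(l,i)$ involves $\xi^{i}$ with $\xi$ of order $mn$, so $M(l,i)\ncong M(l,i+n)$.) The correct statement, which is what the proof of Proposition~\ref{4.1} actually uses, is
$${\rm rad}P(l,i+kn)/{\rm rad}^2P(l,i+kn)\cong M(n-l,i+l+(k-1)n)\oplus M(n-l,i+l+kn),$$
two non-isomorphic simples. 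Your final conclusion in (1) survives this correction, because both factors still lie in the family $\{M(n-l,i+l+k'n)\}_{0\le k'\le m-1}$ (with $k'$ read modulo $m$), which is all that (1) asserts; but as literally written your identification is false, and anyone using your formula to ``read off multiplicities'' would get the wrong $s_k$. Replace the claimed isomorphism with the displayed decomposition above and the proof is correct.
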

\begin{proof}
(1) follows from the proof of Proposition \ref{4.1}. Since ${\rm rad}M={\rm soc}M$ and ${\rm soc}M$ is semisimple, it follows from (1) and Lemma \ref{3.2}(3) that
$$\begin{array}{rl}
&{\rm Hom}_{H_{mn}(\xi)}(M, {\rm soc}M)={\rm Hom}_{H_{mn}(\xi)}(M/{\rm soc}M, {\rm soc}M)\\
\cong&{\rm Hom}_{H_{mn}(\xi)}(\oplus_{k=0}^{m-1}s_kM(n-l,i+l+kn), \oplus_{k=0}^{m-1}t_kM(l,i+kn))=0.\\
\end{array}$$
This shows (2).
\end{proof}

\begin{proposition}\label{4.5}
Let $M$ be an indecomposable $H_{mn}(\xi)$-module of $(s,t)$-type.
\begin{enumerate}
\item[(1)] If $s<t$, then $t=s+1$ and $M\cong\Omega^{-s}M(l,i)$ for some $1\<l\<n-1$ and $i\in \mathbb{Z}_{mn}$.
\item[(2)] If $s>t$, then $s=t+1$ and $M\cong\Omega^tM(l,i)$ for some $1\<l\<n-1$ and $i\in \mathbb{Z}_{mn}$.
\end{enumerate}
\end{proposition}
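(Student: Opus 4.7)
The plan is to prove part (1) by induction on $s$, with the base case $s=1$ handled by Lemma~\ref{4.2}(1) and the inductive step driven by the syzygy functor $\Omega$ combined with Lemma~\ref{4.3}(2); part (2) will follow by the symmetric argument using $\Omega^{-1}$ and Lemma~\ref{4.3}(3), based on Lemma~\ref{4.2}(2). Throughout, I will use that $M$, being indecomposable of Loewy length $2$, has no nonzero projective (hence no injective) summand, so the identities $\Omega^{-1}\Omega M \cong M \cong \Omega\Omega^{-1}M$ recalled just before Lemma~\ref{4.2} are available, and indecomposability is preserved by both $\Omega$ and $\Omega^{-1}$.

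For (1), the base case $s=1$ is immediate: Lemma~\ref{4.3}(1) forces $t\<2$, which combined with $s<t$ gives $t=2=s+1$, so $M$ is of $(1,2)$-type and Lemma~\ref{4.2}(1) identifies $M\cong\Omega^{-1}M(l,i)=\Omega^{-s}M(l,i)$. For the inductive step, assume $s\>2$ and apply $\Omega$: by Lemma~\ref{4.3}(2), $\Omega M$ is indecomposable of $(2s-t,\,s)$-type with $1\<2s-t<s$ (the upper bound coming from $s<t$, the lower from the $t<2s$ clause of Lemma~\ref{4.3}(2)). Thus $\Omega M$ still satisfies the hypothesis of (1) with strictly smaller first coordinate, so the induction hypothesis yields $s=(2s-t)+1$, whence $t=s+1$, and $\Omega M\cong\Omega^{-(s-1)}M(l,i)$ for some admissible $(l,i)$. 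Applying $\Omega^{-1}$ then gives $M\cong\Omega^{-s}M(l,i)$, completing the step.

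Part (2) will be handled dually: induct on $t$, with base $t=1$ forcing $s=2$ via Lemma~\ref{4.3}(1), so that Lemma~\ref{4.2}(2) gives $M\cong\Omega M(l,i)=\Omega^{t}M(l,i)$; for $t\>2$, pass to $\Omega^{-1}M$, which is indecomposable of $(t,\,2t-s)$-type with $1\<2t-s<t$ by Lemma~\ref{4.3}(3), apply the induction hypothesis to conclude $s=t+1$ and $\Omega^{-1}M\cong\Omega^{t-1}M(l,i)$, and then apply $\Omega$. The argument presents no conceptual obstacle; the only delicate point is the bookkeeping. One has to verify at each recursive step that the inequalities $1\<2s-t<s$ (respectively $1\<2t-s<t$) really hold, so that the syzygy stays indecomposable of Loewy length exactly $2$ and again satisfies the strict inequality on its type-coordinates needed to re-invoke the induction hypothesis, and one must check that the exponent of $\Omega$ accumulates to exactly $-s$ (respectively $t$). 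Both reductions are elementary consequences of Lemma~\ref{4.3}, but must be tracked with care.
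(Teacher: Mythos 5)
Your proof is correct and follows essentially the same route as the paper: both reduce via the syzygy functor to the base case of $(1,2)$-type (Lemma~\ref{4.2}) using Lemma~\ref{4.3}, and both rely on the observation that $M$ has no projective summand so $\Omega^{-1}\Omega M\cong M$. The only difference is presentational: the paper carries out an explicit iteration, setting $r=t-s$, finding the integer $l'$ with $l'r<s\leq(l'+1)r$, and verifying by an inner induction that $\Omega^iM$ is of $(s-ir,\,s-(i-1)r)$-type before deducing $r=1$; you fold that bookkeeping into a strong induction on the first coordinate $s$ (resp.\ on $t$ for part~(2)), letting the induction hypothesis applied to $\Omega M$ of $(2s-t,s)$-type deliver $t=s+1$ and the identification $\Omega M\cong\Omega^{-(s-1)}M(l,i)$ simultaneously. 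This is a cleaner packaging of the same argument and contains no gap.
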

\begin{proof}

$(1)$ If $1=s<t$, then $t=2$ by Lemma \ref{4.3}(1) and $M\cong \Omega^{-1}M(l,i)$ for some $1\<l\<n-1$ and $i\in \mathbb{Z}_{mn}$ by Lemma \ref{4.2}. Now assume $1<s<t$. Then $t<2s$ by Lemma \ref{4.3}. Set $r=t-s$, then $1\<r<s$. We know that there is a unique positive integer $l'$ such that $l'r<s\<(l'+1)r$. We claim that $\Omega^iM$ is of $(s-ir,s-(i-1)r)$-type for all $1\<i\<l'$. In fact, $\Omega M$ is of $(s-r,s)$-type by Lemma \ref{4.3}(2). Now let $1\<i<l'$ and assume $\Omega^iM$ is of $(s-ir,s-(i-1)r)$-type. Then $s-ir>s-l'r\>1$ and $\Omega^{i+1}M$ is of $(s-(i+1)r , s-ir)$-type again by Lemma \ref{4.3}(2). Thus we have proved the claim. In particular, $\Omega^{l'}M$ is of $(s-l'r,s-(l'-1)r)$-type. If $s-l'r>1$, then $s-(l'-1)r<2(s-l'r)$ by Lemma \ref{4.3}(2), which forces $(l'+1)r<s$, a contradiction. Hence $s-l'r=1$ and so $r=1$. It follows that $t=s+1$, $s=l'+1$, and $\Omega^{l'}M$ is of $(1,2)$-type. Thus, by Lemma \ref{4.2}, $\Omega^{s-1}M\cong \Omega^{-1}M(l,i)$ for some $1\<l\<n-1$ and $i\in \mathbb{Z}_{mn}$, and so $M\cong \Omega^{-s}M(l,i)$.

$(2)$ It is similar to (1) or dual to (1).
\end{proof}

\begin{corollary}\label{4.6}
Let $M$ be the indecomposable $H_{mn}(\xi)$-module with rl$(M)=2$. Then either $M$ is of $(s,s)$-type or $M$ is isomorphic to $\Omega^{\pm s}M(l,i)$ for some integers $s\>1$ and  $1\<l\<n-1$, and $i\in \mathbb{Z}_{mn}$.
\end{corollary}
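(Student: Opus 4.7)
The plan is to observe that \coref{4.6} is essentially a trichotomy on the relationship between $s$ and $t$ in the $(s,t)$-type of $M$, with all of the substantive inductive work already packaged inside \prref{4.5}. Since ${\rm rl}(M)=2$, both ${\rm soc}M$ and $M/{\rm soc}M$ are nonzero, so the type $(s,t)$ of $M$ is well-defined with $s\geq 1$ and $t\geq 1$.

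First I would dispose of the case $s=t$, which falls directly into the first alternative of the statement. Next, if $s<t$, I would invoke \prref{4.5}(1), which produces $M\cong\Omega^{-s}M(l,i)$ for some $1\leq l\leq n-1$ and $i\in\mathbb{Z}_{mn}$; this matches the second alternative with the negative exponent. Finally, if $s>t$, I would invoke \prref{4.5}(2), which produces $M\cong\Omega^{t}M(l,i)$, and relabelling the exponent as the new $s\geq 1$ gives the second alternative with the positive exponent. There is no real obstacle: the genuine induction, stepping down from an arbitrary $(s,t)$-type by repeatedly applying $\Omega$ (or $\Omega^{-1}$) via \leref{4.3} until a $(1,2)$- or $(2,1)$-type is reached and then identifying that base case through \leref{4.2}, is already carried out inside \prref{4.5}, so \coref{4.6} is merely a repackaging of that conclusion.
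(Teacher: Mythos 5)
Your proof is correct and is exactly the paper's argument: the paper's own proof of this corollary is simply ``It follows from Proposition \ref{4.5},'' and your case split on $s=t$, $s<t$, $s>t$ (with the relabelling of the exponent in the last case) is precisely the intended reading.
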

\begin{proof}
It follows from Proposition \ref{4.5}.
\end{proof}

\begin{lemma}\label{4.7}
For any $1\<l<n$, $i\in \mathbb{Z}_{mn}$, we have following Auslander-Reiten sequences in ${\rm mod}H_{mn}(\xi)$:
\begin{enumerate}
\item[(1)] $0\rightarrow \Omega M(l,i)\rightarrow M(n-l,i+l)\oplus M(n-l,i+l-n)\oplus P(l,i)\rightarrow \Omega^{-1}M(l,i)\rightarrow 0$;
\item[(2)] $0\rightarrow \Omega^{t+2} M(l,i)\rightarrow \Omega^{t+1}M(n-l,i+l)\oplus \Omega^{t+1}M(n-l,i+l-n)\rightarrow \Omega^tM(l,i)\rightarrow 0$;
\item[(3)] $0\rightarrow \Omega^{-t} M(l,i)\rightarrow \Omega^{-(t+1)}M(n-l,i+l)\oplus \Omega^{-(t+1)}M(n-l,i+l-n)\rightarrow \Omega^{-(t+2)}M(l,i)\rightarrow 0$,
    \end{enumerate}
where $t\>0$ and $\Omega^0M(l,i)=M(l,i)$.
\end{lemma}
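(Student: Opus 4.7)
The plan is to first prove (1) by an explicit construction and then derive (2) and (3) by iteratively shifting with $\Omega^{\pm 1}$.

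For (1), using the standard basis of $P(l,i)$ described after Lemma~3.7, I identify $\Omega M(l,i)$ with ${\rm rad}\,P(l,i)$ and $\Omega^{-1}M(l,i)$ with $P(l,i)/{\rm soc}\,P(l,i)$. By Corollary~3.11 and the proof of Proposition~3.8, both modules have their top (respectively, socle) canonically isomorphic to $M(n-l,i+l)\oplus M(n-l,i+l-n)$. I then define
$$f(y)=(\pi_1(y),\pi_2(y),\iota(y)),\qquad g(y_1,y_2,z)=\sigma_1(y_1)+\sigma_2(y_2)-p(z),$$
where $\iota\colon{\rm rad}\,P(l,i)\hookrightarrow P(l,i)$ and $p\colon P(l,i)\twoheadrightarrow\Omega^{-1}M(l,i)$ are the canonical maps and the $\pi_j,\sigma_j$ are the projections onto (respectively, inclusions of) the two simple summands. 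Under the canonical identifications, $p\iota$ coincides with $\sigma_1\pi_1+\sigma_2\pi_2$, so $gf=0$; combined with the injectivity of $f$, the surjectivity of $g$, and the dimension balance $(2n-l)+(2n-l)=4n-2l$, this gives exactness.

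To show (1) is an Auslander--Reiten sequence, I use that $H_{mn}(\xi)$ is symmetric, so $\tau=\Omega^2$, whence $\tau(\Omega^{-1}M(l,i))=\Omega M(l,i)$ matches the left-hand term. The sequence is non-split because ${\rm rl}(\Omega^{-1}M(l,i))=2$, so by Krull--Schmidt, $\Omega^{-1}M(l,i)$ cannot be a direct summand of the middle term, whose only non-semisimple summand is the projective $P(l,i)$. It remains to verify that $g$ is right almost split, i.e.\ every non-isomorphism $h\colon X\to\Omega^{-1}M(l,i)$ from an indecomposable $X$ factors through $g$. I argue by cases on ${\rm Im}\,h$: if ${\rm Im}\,h\subseteq{\rm soc}(\Omega^{-1}M(l,i))$, then $h$ factors through $\sigma_1\oplus\sigma_2$ via a quotient of $X$; otherwise, $h$ induces a nonzero top-level map $X/{\rm rad}\,X\twoheadrightarrow M(l,i)$, and using the projective cover of $X$ together with the projectivity of $P(l,i)$, $h$ lifts through $p$ to yield a factorization through the $P(l,i)$-summand of the middle.

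For (2) and (3) I induct on $t\geq 0$ using the Heller operator. Since $\Omega$ induces a triangulated self-equivalence of the stable category $\underline{{\rm mod}}\,H_{mn}(\xi)$, it sends AR triangles to AR triangles. Applying $\Omega$ to (1) and using $\Omega P(l,i)=0$, together with a dimension check to rule out extraneous projective summands in the middle, yields (2) for $t=0$; iterating gives (2) for all $t$, and the dual argument with $\Omega^{-1}$ yields (3). The main technical obstacle is the verification of the right-almost-split property of $g$ in (1): the case where $X$ is non-projective and $h$ surjects on the top requires a careful lift through the projective cover of $\Omega^{-1}M(l,i)$. Once this is in hand, the uniqueness of AR sequences combined with the $\Omega^{\pm 1}$-shift argument makes (2) and (3) essentially formal.
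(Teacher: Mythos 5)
Your overall strategy coincides with the paper's: the paper disposes of this lemma with the single line ``similar to \cite[Theorem 3.17]{Ch3}'', and the argument behind that reference is exactly the one you outline --- part (1) is the standard almost split sequence $0\to{\rm rad}P\to{\rm rad}P/{\rm soc}P\oplus P\to P/{\rm soc}P\to0$ attached to the non-simple projective-injective module $P=P(l,i)$, combined with the identifications ${\rm rad}P(l,i)\cong\Omega M(l,i)$, $P(l,i)/{\rm soc}P(l,i)\cong\Omega^{-1}M(l,i)$ and ${\rm rad}P(l,i)/{\rm soc}P(l,i)\cong M(n-l,i+l)\oplus M(n-l,i+l-n)$ from Section 3; parts (2) and (3) then follow by shifting with $\Omega^{\pm1}$, using that $H_{mn}(\xi)$ is symmetric so $\tau=\Omega^2$. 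Your exactness and non-splitness checks, and the $\Omega$-shift (including ruling out new projective summands in the middle terms of (2) and (3)) are fine.

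The gap is in the step you yourself flag as the main obstacle: the right-almost-split property of $g$ when $h\colon X\to\Omega^{-1}M(l,i)$ is surjective. You propose to lift $h$ through $p$ ``using the projective cover of $X$ together with the projectivity of $P(l,i)$''. Projectivity of $P(l,i)$ only produces maps \emph{out of} $P(l,i)$: it gives $w\colon P(l,i)\to X$ with $hw=p$, not a factorization $h=p\tilde h$. Going through the projective cover of $X$ gives a map $P(X)\to P(l,i)$ covering $h$ on tops, but to descend it to $X$ you must check that the composite $P(X)\to P(l,i)\to P(l,i)/{\rm soc}P(l,i)$ annihilates $\Omega X=\ker(P(X)\to X)$ --- which is precisely the question of whether $h$ factors through $p$, so the argument as written is circular. (Note also that $h$ need not literally factor through $p$; only $h$ modulo maps landing in ${\rm soc}\,\Omega^{-1}M(l,i)$ does, which is exactly why the summand ${\rm rad}P/{\rm soc}P$ must be present in the middle term.) The clean repair is not to reprove this at all: statement (1) is verbatim \cite[Proposition V.5.5]{ARS} applied to $P(l,i)$, and you should simply invoke it, as the paper implicitly does through \cite{Ch3}. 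With that citation in place, the rest of your proposal (the identification of the three terms and the $\Omega^{\pm1}$-iteration for (2) and (3)) is correct and complete.
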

\begin{proof}
It is similar to \cite[Theorem 3.17]{Ch3}.
\end{proof}

In what follows, we investigate the indecomposable $H_{mn}(\xi)$-modules of $(t,t)$-type.

\begin{lemma}\label{4.8}
Let $M$ be an indecomposable modules of $(t,t)$-type with $t\>2$. Then $M$ contains no submodules of $(s+1,s)$-type. Consequently, $M$ contains no submodules $N$ with $l(N/{\rm soc}N)>l({\rm soc}N)$.
\end{lemma}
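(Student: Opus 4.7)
The plan is to prove the first assertion by contradiction (and then deduce the ``Consequently'' clause from it via \coref{4.6}). Suppose $N \subseteq M$ is an indecomposable submodule of $(s+1,s)$-type for some $s \geq 1$; by \prref{4.5}(2), $N \cong \Omega^s M(l_0, i_0)$ for appropriate $l_0, i_0$. By \prref{4.1} applied to $M$, $I(M) \cong \bigoplus_k t_k P(l, i+kn)$, and since ${\rm soc}N \subseteq {\rm soc}M$ is of $M(l,\cdot)$-type, the injective envelope $I(N) \cong \bigoplus_k r_k P(l, i+kn)$ (with $r_k \leq t_k$) sits inside $I(M)$ as a direct summand. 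I would fix a decomposition $I(M) = I(N) \oplus Q$ with $N \subseteq I(N)$, and let $\phi \colon M \to I(N)$ be the restriction to $M$ of the projection. Then $\phi|_N = \mathrm{id}_N$ and $N \cap \ker \phi = 0$.

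The central observation is that $\phi(M) \subseteq {\rm rad}\, I(N)$. Indeed, being a quotient of $M$, the image $\phi(M)$ has Loewy length at most $2$; since ${\rm rad}\,\phi(M) = \phi({\rm soc}M) \subseteq {\rm soc}\, I(N) = {\rm soc}N$, the quotient $\phi(M)/{\rm soc}N$ is semisimple. But by \prref{3.8} the only semisimple submodule of $I(N)/{\rm soc}N$ is its own socle, which is the middle layer of $I(N)$ and consists entirely of $M(n-l,\cdot)$-type simples; so $\phi(M) \subseteq {\rm rad}\, I(N)$.

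For the base case $s=1$, $I(N) = P(l_0, i_0)$ is a single indecomposable projective and by \prref{3.8}, ${\rm rad}\,I(N) = \Omega M(l_0, i_0) = N$ itself. Hence $\phi(M) \subseteq N$, and together with $\phi(M) \supseteq \phi(N) = N$ this forces $\phi(M) = N$. Since $\phi|_N = \mathrm{id}_N$ the short exact sequence $0 \to \ker\phi \to M \to N \to 0$ splits, giving $M = N \oplus \ker\phi$; indecomposability of $M$ and $l(M) = 2t \geq 4 > 3 = l(N)$ force both summands to be nonzero, contradicting indecomposability. For $s \geq 2$, I would induct on $s$, using the Auslander--Reiten sequence of \leref{4.7}(2) starting at $N$,
\[
  0 \to N \to B_1 \oplus B_2 \to \Omega^{s-2}M(l_0, i_0) \to 0,
\]
in which $B_1, B_2$ are indecomposable of $(s, s-1)$-type. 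The non-split monomorphism $\iota \colon N \hookrightarrow M$ factors through the left almost-split map as $\iota = \psi \circ \iota_B$ for some $\psi \colon B_1 \oplus B_2 \to M$. If either $\psi|_{B_i}$ were injective, then $B_i \hookrightarrow M$ would be an $(s, s-1)$-type submodule, contradicting the induction hypothesis.

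The main obstacle lies in the remaining case where neither $\psi|_{B_i}$ is injective: one must combine $\iota_B(N) \cap \ker\psi = 0$, the length bound $l(\ker\psi) \leq l(B_1 \oplus B_2) - l(N) = 2s - 3$, and the specific socle and top structure of each indecomposable $B_i$ to show that $\ker\psi$ cannot meet both $B_1$ and $B_2$ nontrivially while avoiding $\iota_B(N)$, forcing $\ker\psi \cap B_i = 0$ for some $i$ and hence a contradiction. Finally, for the ``Consequently'' part: given $N' \subseteq M$ with $l(N'/{\rm soc}N') > l({\rm soc}N')$, decompose $N' = \bigoplus_j N'_j$ into indecomposables. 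No $N'_j$ can be a (Loewy-length-$3$) projective since ${\rm rl}(M) \leq 2$, so by \coref{4.6} and \prref{4.5} each $N'_j$ has $l(N'_j/{\rm soc}N'_j) - l({\rm soc}N'_j) \in \{-1, 0, 1\}$; the positive total forces some $N'_{j_0}$ to have difference $+1$, i.e., $(s_0+1, s_0)$-type with $s_0 \geq 1$, contradicting the first assertion.
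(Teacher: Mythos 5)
Your base case $s=1$ is correct and uses a genuinely different mechanism from the paper: you exploit the fact that $I(N)=P(l_0,i_0)$ is a single indecomposable projective and show, via the Loewy filtration (Corollary \ref{3.11}), that the projection $\phi:M\to I(N)$ lands inside ${\rm rad}\,I(N)=N$, making $\phi$ a retraction of the inclusion $N\hookrightarrow M$. The paper instead obtains the splitting by showing ${\rm Ext}^1_{H_{mn}(\xi)}(M/N,{\rm rad}\,P(l,i+kn))=0$ from a long exact sequence together with Corollary~\ref{4.4}(2). Both arguments are valid for $s=1$; yours is arguably more geometric, theirs more homological.

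The inductive step, however, has a genuine gap, and you acknowledge it yourself: in the case where neither restriction $\psi|_{B_i}$ is injective, you describe what you would need to show (that $\ker\psi$ must miss some $B_i$), but you do not prove it. This is precisely the hard part, and it is not clear the AR-sequence route closes easily — the length bound $l(\ker\psi)\leq 2s-3$ and the socle/top data of the $B_i$ by themselves do not obviously force $\ker\psi\cap B_i=0$. The paper's inductive step takes a completely different, and cleaner, route: from $0\to N\to M\to M/N\to 0$, it uses the induction hypothesis to show every simple submodule of $M/N$ already lifts to ${\rm soc}\,M$, hence ${\rm soc}\,M\cong{\rm soc}\,N\oplus{\rm soc}(M/N)$ and $I(M)\cong I(N)\oplus I(M/N)$; applying $\Omega^{-1}$ then produces a submodule of $(s,s-1)$-type inside the indecomposable $(t,t)$-type module $\Omega^{-1}M$, contradicting the induction hypothesis. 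That cosyzygy trick sidesteps any analysis of almost-split middle terms, which is exactly what stalls your argument. A further, smaller point: the paper's statement concerns arbitrary (not necessarily indecomposable) submodules $N$ of $(s+1,s)$-type, and its inductive argument handles that directly, whereas you reduce to indecomposables and rely on the ``Consequently'' clause to recover the general case — that circularity is harmless, but the decomposability issue for $\pi^{-1}(V)$ in the paper's proof is also what lets them invoke the induction hypothesis, so the two formulations are not interchangeable step by step.
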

\begin{proof}

We first show the result for $s=1$. By Proposition \ref{4.1}, we may assume ${\rm soc}M\cong\oplus_{k=0}^{m-1}t_kM(l,i+kn)$ for some $1\<l\<n-1$, $0\<i\<n-1$ and $t_k\in\mathbb{N}$ with $\sum_{k=0}^{m-1}t_k=t$. If $M$ contains a submodule $N$ of $(2,1)$-type, then $N\cong \Omega M(l,i+kn)\cong{\rm rad}P(l,i+kn)$ for some fixed $k$ with $t_k>0$ by Lemma \ref{4.2} and ${\rm soc}N\subset{\rm soc}M$. Hence, there is a short exact sequence
$$0\rightarrow {\rm rad}P(l,i+kn)\rightarrow M\rightarrow K\rightarrow 0$$ with $K\neq 0$ since $l({\rm rad}P(l,i+kn))=3<l(M)$. By Corollary \ref{4.4}(2), we have
${\rm Hom}_{H_{mn}(\xi)}(K, M(l,i+kn))=0$. On the other hand, there is anther short exact sequence
$$0\rightarrow {\rm rad}P(l,i+kn)\rightarrow P(l,i+kn)\rightarrow M(l,i+kn)\rightarrow 0,$$ form which one obtains a long exact sequence
$$\begin{array}{rl}
0&\rightarrow {\rm Hom}_{H_{mn}(\xi)}(K,{\rm rad}P(l,i+kn))\rightarrow{\rm Hom}_{H_{mn}(\xi)}(K,P(l,i+kn))\\
 &\rightarrow{\rm Hom}_{H_{mn}(\xi)}(K,M(l,i+kn))\rightarrow {\rm Ext}^1_{H_{mn}(\xi)}(K,{\rm rad}P(l,i+kn))\rightarrow 0.
 \end{array}$$
Hence ${\rm Ext}^1_{H_{mn}(\xi)}(K,{\rm rad}P(l,i+kn))=0$, which implies $M\cong {\rm rad}P(l,i+kn)\oplus K$, a contradiction. This shows that $M$ contains no submodules of $(2,1)$-type. Now suppose $1<s<t$ and any indecomposable module of $(t,t)$-type contains no submodules of $(j+1,j)$-type for all $1\<j<s$. If $M$ contains a submodule $N$ of $(s+1,s)$-type, then ${\rm soc}N\cong\oplus_{k=0}^{m-1}s_kM(l,i+kn)$ for some $s_k\in\mathbb N$ with $s_k\<t_k$ and $\sum_{k=0}^{m-1}s_k=s$. By Corollary \ref{4.4}(1), $N/{\rm soc}N\cong \oplus_{k=0}^{m-1}s'_kM(n-l,i+l+kn)$ for some $s'_k\in\mathbb{N}$ with $\sum_{k=0}^{m-1}s'_k=s+1$. Moreover, there is a short exact sequence. $$0\rightarrow N\rightarrow M\xrightarrow{\pi}M/N\rightarrow 0.$$
Let $V$ be a simple submodule of $M/N$. Then $N\subset\pi^{-1}(V)\subseteq M$ and $\pi^{-1}(V)/N\cong V$. If $V$ is not isomorphic to any submodule of ${\rm soc}M$,
then $V$ is not isomorphic to any submodule of ${\rm soc}(\pi^{-1}(V))$, and hence ${\rm soc}(\pi^{-1}(V))={\rm soc}N$. In this case, $l({\rm soc}(\pi^{-1}(V)))=s$ and
$l(\pi^{-1}(V)/{\rm soc}(\pi^{-1}(V)))=s+2$. Thus, $\pi^{-1}(V)$ is decomposable by Lemma \ref{3.12} and Corollary \ref{4.6}, and $\pi^{-1}(V)$ has at least one direct summand of $(j+1,j)$-type with $j<s$, which contradicts the induction hypothesis. This shows that $V$ is isomorphic to a submodule of ${\rm soc}M$. Then by Corollary \ref{4.4}, one can check that $V\subseteq\pi({\rm soc}M)$, ${\rm soc}(M/N)=\pi({\rm soc}M)\cong\oplus_{k=0}^{m-1}r_kM(l,i+kn)$ for some $r_k\in \mathbb{N}$ with $\sum_{k=0}^{m-1}r_k=t-s$ and ${\rm soc}M\cong{\rm soc}N\oplus{\rm soc}(M/N)$. Hence $I(M)\cong I(N)\oplus I(M/N)$. Thus, any monomophism $M\rightarrow I(N)\oplus I(M/N)$ is an envelope of $M$. Now by applying $\Omega^{-1}$ to the above exact sequence, one gets an exact sequence $$0\rightarrow \Omega^{-1}(N)\rightarrow \Omega^{-1}(M)\rightarrow \Omega^{-1}(M/N)\rightarrow 0.$$ This contradicts the induction hypothesis since $\Omega^{-1}(M)$ is of $(t,t)$-type and $\Omega^{-1}(N)$ is of $(s,s-1)$-type by Lemma \ref{4.3}(3). Therefore, $M$ contains no submodule of $(s+1,s)$-type. This completes the proof.
\end{proof}

\begin{lemma}\label{4.9}
Assume that $M$ be an indecomposable modules of $(t,t)$-type. Let $L$ and $N$ be submodules of $M$.
If $l(L/{\rm soc}L)=l({\rm soc}L)$ and $l(N/{\rm soc}N)=l({\rm soc}N)$, then $l((L\cap N)/{\rm soc}(L\cap N))=l({\rm soc}(L\cap N))$.
\end{lemma}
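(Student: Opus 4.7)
The plan is to combine Lemma \ref{4.8} applied to the submodules $L\cap N$ and $L+N$ of $M$ with a Schur-type vanishing argument for an obstruction map. Set $s_L = l({\rm soc}L) = l(L/{\rm soc}L)$ and $s_N = l({\rm soc}N) = l(N/{\rm soc}N)$ (these equalities are the balanced hypothesis), and let $a = l({\rm soc}(L\cap N))$, $b = l((L\cap N)/{\rm soc}(L\cap N))$, $c = l({\rm soc}(L+N))$, $d = l((L+N)/{\rm soc}(L+N))$. The goal is $a=b$. Lemma \ref{4.8} applied to $L\cap N$ and to $L+N$ immediately yields $b\le a$ and $d\le c$; the task reduces to establishing $a\le b$.

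The key construction is an $H_{mn}(\xi)$-linear map
\[
\phi:\bar L\cap\bar N \;\longrightarrow\; {\rm soc}M/({\rm soc}L+{\rm soc}N),
\]
where $\bar X$ denotes the image of a submodule $X$ in $\bar M := M/{\rm soc}M$. For $\bar z\in\bar L\cap\bar N$, pick lifts $l\in L$, $n\in N$ with $\bar l = \bar n = \bar z$; then $l-n\in {\rm soc}M$, and I set $\phi(\bar z) := (l-n) + ({\rm soc}L+{\rm soc}N)$. Changing the lifts $l,n$ by elements of ${\rm soc}L, {\rm soc}N$ respectively alters $l-n$ only by an element of ${\rm soc}L+{\rm soc}N$, so $\phi$ is well-defined; the relation $\phi(h\bar z) = (hl-hn) + ({\rm soc}L+{\rm soc}N) = h\cdot\phi(\bar z)$ for $h\in H_{mn}(\xi)$ shows that $\phi$ is $H_{mn}(\xi)$-linear.

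Next I argue $\phi=0$. Since $\bar M$ is semisimple, so is $\bar L\cap\bar N$; by Corollary \ref{4.4}(1), every simple constituent of $\bar M$ is isomorphic to some $M(n-l,\,i+l+kn)$. On the other hand, ${\rm soc}M/({\rm soc}L+{\rm soc}N)$ is a semisimple quotient of ${\rm soc}M$, which by Proposition \ref{4.1} is a direct sum of simples of the form $M(l,\,i+kn)$. By Lemma \ref{3.2}(3), no simple of the form $M(l,\,i+kn)$ is isomorphic to any simple of the form $M(n-l,\,i+l+k'n)$; hence Schur's lemma for semisimple modules forces $\phi=0$.

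The vanishing of $\phi$ means that for every $\bar z\in\bar L\cap\bar N$ the lifts $l,n$ can be adjusted by socle elements so that $l=n\in L\cap N$, whence $\bar L\cap\bar N = \overline{L\cap N}$; equivalently, ${\rm soc}(L+N) = {\rm soc}L + {\rm soc}N$. Socle inclusion-exclusion then gives $c = l({\rm soc}L)+l({\rm soc}N)-l({\rm soc}L\cap{\rm soc}N) = s_L+s_N-a$, and the Grassmann identity in $\bar M$, together with the balanced hypothesis $l(\bar L)=s_L, l(\bar N)=s_N$, gives $d = l(\bar L)+l(\bar N)-l(\bar L\cap\bar N) = s_L+s_N-b$. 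Substituting into $d\le c$ yields $a\le b$, and combined with $b\le a$ from the first paragraph we conclude $a=b$. The main conceptual obstacle is the construction of $\phi$ and the realization that the disjointness of socle-type and top-type simples (Lemma \ref{3.2}(3)) is exactly what kills the obstruction; everything else is bookkeeping.
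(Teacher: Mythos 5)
Your proof is correct and reaches the same two inequalities as the paper, but it gets there by a different device. The paper's proof never introduces the obstruction map $\phi$: it works directly in the Grothendieck group $G_0(H_{mn}(\xi))$, where for any submodule $X\subseteq M$ one has $[X]=[{\rm soc}X]+[X/{\rm soc}X]$, and then observes — via Corollary~\ref{4.4}(2), i.e.\ the same orthogonality of socle-type simples $M(l,\cdot)$ against top-type simples $M(n-l,\cdot)$ that you invoke through Lemma~\ref{3.2}(3) — that the inclusion-exclusion identity $[L+N]=[L]+[N]-[L\cap N]$ splits into a ``socle-part'' identity and a ``top-part'' identity. Taking lengths in each part and feeding them into Lemma~\ref{4.8} applied to $L+N$ and $L\cap N$ finishes exactly as in your last paragraph. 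So the orthogonality lemma and Lemma~\ref{4.8} are the shared engine; the difference is that the paper projects in $G_0$, while you construct the concrete connecting morphism $\bar L\cap\bar N\to{\rm soc}M/({\rm soc}L+{\rm soc}N)$ and kill it by Schur. Your approach makes the mechanism more visible (it is essentially the snake-lemma connecting map for the three socle sequences), at the cost of one small expository gap: having shown $\phi=0$ gives $\bar L\cap\bar N=\overline{L\cap N}$, you assert that this is ``equivalently'' ${\rm soc}(L+N)={\rm soc}L+{\rm soc}N$ without proof. That equivalence is true and easy (given $x\in{\rm soc}(L+N)$, write $x=l+n$, note $\bar l=-\bar n\in\bar L\cap\bar N=\overline{L\cap N}$, pick $w\in L\cap N$ with $\bar w=\bar l$, and then $x=(l-w)+(n+w)\in{\rm soc}L+{\rm soc}N$), but it should be said; alternatively one can observe that both facts are exactly the exactness of $0\to\overline{L\cap N}\to\bar L\cap\bar N\to{\rm soc}(L+N)/({\rm soc}L+{\rm soc}N)\to 0$, with $\phi$ the connecting map, so $\phi=0$ kills both ends at once. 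With that sentence added, your argument is a complete and legitimate alternative to the paper's.
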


\begin{proof}
Suppose $l(L/{\rm soc}L)=l({\rm soc}L)$ and $l(N/{\rm soc}N)=l({\rm soc}N)$. For any submodule $X$ of $M$, we have
$[X]=[X/{\rm soc}X]+[{\rm soc}X]$ in $G_0(H_{mn}(\xi))$. Moreover, $X/{\rm soc}X\cong(X+{\rm soc}M)/{\rm soc}M\subseteq M/{\rm soc}M$ and ${\rm soc}X\subseteq{\rm soc}M$. Since $[L+N]=[L]+[N]-[L\cap N]$ in $G_0(H_{mn}(\xi)$, it follows from Corollary \ref{4.4}(2) that
\begin{eqnarray*}
	&[(L+N)/{\rm soc}(L+N)]=[L/{\rm soc}L]+[N/{\rm soc}N]-[(L\cap N)/{\rm soc}(L\cap N)],\\
	&[{\rm soc}(L+N)]=[{\rm soc}L]+[{\rm soc}N]-[{\rm soc}(L\cap N)]
\end{eqnarray*}
in $G_0(H_{mn}(\xi))$. Hence we have
\begin{eqnarray*}
&l((L+N)/{\rm soc}(L+N))=l(L/{\rm soc}L)+l(N/{\rm soc}N)-l((L\cap N)/{\rm soc}(L\cap N)),\\
&l({\rm soc}(L+N))=l({\rm soc}L)+l({\rm soc}N)-l({\rm soc}(L\cap N)).
\end{eqnarray*}
By Lemma  \ref{4.8}, $l((L+N)/{\rm soc}(L+N))\<l({\rm soc}(L+N))$ and $l((L\cap N)/{\rm soc}(L\cap N))\<
l({\rm soc}(L\cap N))$. This forces $l((L\cap N)/{\rm soc}(L\cap N))=l({\rm soc}(L\cap N))$.
\end{proof}

For any positive integer $i, j$. Let $T_{i,j}\in M_{i\times j}(\Bbbk)$ be given by
$$\left(
    \begin{array}{ccccc}
      0 & 0 & \cdots & 0 & 1 \\
      0 & 0 & \cdots & 0 & 0 \\
       & \cdots & \cdots & \cdots &  \\
      0 & 0 & \cdots & 0 & 0 \\
    \end{array}
  \right).$$
Clearly, $T_{s,s}=D_s^{s-1}$ for any integer  $s\>1$. In particular, $T_{n,n}=D_n^{n-1}$.
For any $1\<l\<n-1$ and $i\in \mathbb{Z}$, define $A_l(i)$ in $M_{l}(\Bbbk)$ by
$$\begin{array}{c}
\vspace{0.1cm}
A_l(i)=\left(
      \begin{array}{ccccc}
        0 & 0 & 0 & \cdots & 0 \\
        \a_1(l,i) & 0 & 0 & \cdots & 0 \\
        0 & \a_2(l,i) & 0 & \cdots & 0 \\
        \vdots & \vdots & \ddots & \ddots & \vdots \\
        0 & 0 & \cdots & \a_{l-1}(l,i) & 0 \\
      \end{array}
    \right),\\
\end{array}$$
where $\a_k(l,i)$ is defined in the last section. By Lemma \ref{3.6}(2), $\a_k(l,i)\neq 0$ for all $1\<k\<l-1$.
\begin{lemma}\label{4.10}
$(1)$ For any $1\<l\<n-1$ and $i\in \mathbb{Z}$, there is a unique algebra morphism $\phi: H_{mn}(\xi)\rightarrow M_n(\Bbbk)$ such that
$$\begin{array}{c}
\phi(a)=Z_{l,i},\ \phi(b)={\rm diag}\{q^{i+l-1},q^{i+l-2},\cdots,q^{i+l-n}\},\\
\phi(c)={\rm diag}\{\xi^i,\xi^{i+1},\cdots, \xi^{i+n-1}\},\ \phi(d)=D_n,\\
\end{array}$$
where $Z_{l,i}$ and $D_n$ are given in Section 3. Denote by $T_1(l,i)$ the corresponding left $H_{mn}(\xi)$-module. Then $T_1(l,i)$ is of $(1,1)$-type, ${\rm soc}T_1(l,i)\cong M(l,i)$ and $T_1(l,i)/{\rm soc}T_1(l,i)\cong M(n-l,i+l)$.

$(2)$ For any $1\<l\<n-1$ and $i\in \mathbb{Z}$, there is a unique algebra morphism $\phi: H_{mn}(\xi)\rightarrow M_n(\Bbbk)$ such that
$$\begin{array}{c}
\phi(a)=\left(
           \begin{array}{cc}
             A_{n-l}(i+l-n) & 0 \\
             T_{l,n-l} & A_l(i) \\
           \end{array}
         \right),
\ \phi(d)=\left(
             \begin{array}{cc}
               D_{n-l} & 0 \\
               0 & D_l \\
             \end{array}
           \right),\\
\phi(b)={\rm diag}\{q^{i-1},q^{i-2},\cdots, q^{i-n}\},\
\phi(c)={\rm diag}\{\xi^{i+l-n},\xi^{i+l-n+1},\cdots, \xi^{i+l-n+n-1}\}.\\
\end{array}$$
Denote by $\ol{T}_1(l,i)$ the corresponding left $H_{mn}(\xi)$-module. Then ${\ol T}_1(l,i)$ is of $(1,1)$-type, and ${\rm soc}{\ol T}_1(l,i)\cong M(l,i)$ and $\ol{T}_1(l,i)/{\rm soc}\ol{T}_1(l,i)\cong M(n-l,i+l-n)$.
\end{lemma}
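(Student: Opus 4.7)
The plan is to sidestep a direct matrix verification by realizing both modules as explicit submodules of the projective $P(l,i)$ from \prref{3.8}; the algebra relations then come for free. Let $\{v_1,\ldots,v_n,u_1,\ldots,u_n\}$ denote the standard basis of $P(l,i)$ from Section~\ref{s3.2}. For part (1), set $U:={\rm span}\{u_1,\ldots,u_n\}$. The displayed action of $H_{mn}(\xi)$ on $P(l,i)$ shows that $a,b,c,d$ all preserve $U$ (note $au_n=\a_n(l,i)u_1\in U$), and writing these actions in the ordered basis $\{u_1,\ldots,u_n\}$ reproduces the matrices $Z_{l,i}$, the stated diagonal matrices, and $D_n$. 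This simultaneously constructs the algebra map $\phi$ (uniqueness being automatic) and identifies $T_1(l,i)\cong U$.

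For the socle, ${\rm soc}U$ is semisimple, so \coref{3.4} gives $l({\rm soc}U)=\dim({\rm soc}U)^d\<\dim U^d$; inspection of the action of $d$ as $D_n$ gives $U^d=\Bbbk u_1$, hence $l({\rm soc}U)\<1$. Since $\a_l(l,i)=0$ by \leref{3.6}(2), ${\rm span}\{u_1,\ldots,u_l\}$ is a submodule of $U$, and a direct comparison with the standard basis of $M(l,i)$ from Section~\ref{s3.1} shows this submodule is isomorphic to $M(l,i)$; it therefore equals ${\rm soc}U$. The quotient has basis $\ol u_{l+1},\ldots,\ol u_n$, and \leref{3.6}(4,6,7) together with $q^n=1$ translate the scalars $\a_{l+k}(l,i)$ and the $b,c$-eigenvalues into the standard data of $M(n-l,i+l)$, completing the proof of (1).

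Part (2) proceeds with the submodule $W:={\rm span}\{v_1,\ldots,v_{n-l},u_1,\ldots,u_l\}\subseteq P(l,i)$. Closure of $W$ under $a$ is where the two key vanishings from \leref{3.6} enter: $\g_{n-l}(l,i)=0$ forces $av_{n-l}=u_1\in W$ (the $v_{n-l+1}$-coefficient drops), while $\a_l(l,i)=0$ gives $au_l=0$. Writing $a$ in the basis $\{v_1,\ldots,v_{n-l},u_1,\ldots,u_l\}$ yields a block-lower-triangular matrix whose upper block has $\g_k(l,i)$ on the subdiagonal, lower block has $\a_k(l,i)$ on the subdiagonal, and whose $l\times(n-l)$ off-diagonal block has a single $1$ in position $(1,n-l)$; by \leref{3.6}(6,7) the upper block equals $A_{n-l}(i+l-n)$, reproducing the matrix in the lemma. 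The diagonal matrices for $b,c$ and the block shift matrix for $d$ match similarly, after using $q^n=1$ to rewrite the $b$-eigenvalues.

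The only step that requires more than routine checking is the socle of $W$, since $W^d=\Bbbk v_1\oplus\Bbbk u_1$ is now two-dimensional. I claim $\langle v_1\rangle=W$: iterating $a$ on $v_1$ produces $v_2,\ldots,v_{n-l}$ with nonzero coefficients, since $\g_k(l,i)\neq 0$ for $k<n-l$ by \leref{3.6}(1); then $av_{n-l}=u_1$ crosses into the $u$-part and further $a$-action produces $u_2,\ldots,u_l$. Since ${\rm span}\{u_1,\ldots,u_l\}$ is a proper submodule of $W$, $\langle v_1\rangle=W$ is not simple, so $v_1\notin{\rm soc}W$; combined with $u_1\in{\rm soc}W$ this forces $({\rm soc}W)^d=\Bbbk u_1$, whence ${\rm soc}W={\rm span}\{u_1,\ldots,u_l\}\cong M(l,i)$ exactly as in part (1). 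Identifying $W/{\rm soc}W$ with $M(n-l,i+l-n)$ then proceeds via \leref{3.6}(6,7) in the same way. The main obstacle I anticipate is purely bookkeeping: keeping the $\a,\g$ identities and the mod-$n$ eigenvalue shifts straight throughout.
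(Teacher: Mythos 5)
Your realization of $T_1(l,i)$ and $\ol T_1(l,i)$ as the submodules $U={\rm span}\{u_1,\ldots,u_n\}$ and $W={\rm span}\{v_1,\ldots,v_{n-l},u_1,\ldots,u_l\}$ of $P(l,i)$ is correct and is a genuinely different route from the paper's proof, which performs the direct verification that the four matrices satisfy the defining relations of $H_{mn}(\xi)$. Your approach is more economical: once $U$ and $W$ are seen to be $H_{mn}(\xi)$-stable (using $\a_l(l,i)=0$, $\g_{n-l}(l,i)=0$), the relations are inherited from $P(l,i)$ and never need rechecking. The matrix comparisons you perform and the use of Lemma~\ref{3.6}(4,6,7) together with $q^n=1$ to match the entries are all correct. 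Interestingly, the paper itself implicitly makes the same identifications later, inside the proof of Proposition~\ref{4.12}, so your proposal essentially reverses the logical order.

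There is, however, a gap in the socle argument for part (2). You assert that because $\langle v_1\rangle=W$ is not simple, $v_1\notin{\rm soc}W$. That implication is false in general: if $W$ were a direct sum $S_1\oplus S_2$ of two non-isomorphic simples and $v_1=s_1+s_2$ with both components nonzero, then $\langle v_1\rangle=W$ would fail to be simple yet $v_1\in{\rm soc}W$. What your computation actually shows is the implication $v_1\in{\rm soc}W\Rightarrow W=\langle v_1\rangle\subseteq{\rm soc}W\Rightarrow W$ semisimple, so you still owe an argument that $W$ is not semisimple. The cleanest repair — which also shortens part (1) — is to invoke what the proof of Proposition~\ref{3.8} already established: ${\rm soc}P(l,i)={\rm span}\{u_1,\ldots,u_l\}\cong M(l,i)$ is simple. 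Since $U$ and $W$ are submodules of $P(l,i)$ containing this subspace, ${\rm soc}U={\rm soc}W={\rm soc}P(l,i)\cong M(l,i)$ immediately, with no appeal to Corollary~\ref{3.4} or to $W^d$ at all. With that substitution, the rest of your argument goes through.
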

\begin{proof}
It follows from a straightforward computation.
\end{proof}

Obviously, we have the following lemma.

\begin{lemma}\label{4.11}
Let $1\<l, l'\<n-1$ and $i,i'\in\mathbb{Z}$.
\begin{enumerate}
\item[(1)] $T_1(l,i)\ncong\ol{T}_1(l',i')$.
\item[(2)] $T_1(l,i)\cong T_1(l',i')$ if and only if $l=l'$ and $i\equiv i'$ $({\rm mod}\ mn)$.
\item[(3)] $\ol{T}_1(l,i)\cong\ol{T}_1(l',i')$ if and only if $l=l'$ and $i\equiv i'$ $({\rm mod}\ mn)$.
\end{enumerate}
\end{lemma}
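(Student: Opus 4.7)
The plan is to distinguish the three modules using their socles and tops, both of which are simple modules of known parameters by Lemma \ref{4.10}, and then invoke Lemma \ref{3.2} to force equality of the parameters. The proof is essentially a comparison argument, and the only subtle point is that part (1) requires the standing assumption $m\geq 2$.

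For part (2), first I would suppose $T_1(l,i)\cong T_1(l',i')$. Then their socles are isomorphic, so by Lemma \ref{4.10}(1) and Lemma \ref{3.2}(2) one gets $M(l,i)\cong M(l',i')$, hence $l=l'$ and $i\equiv i'\pmod{mn}$. The converse is immediate from the explicit matrix description in Lemma \ref{4.10}(1), since the representing matrices depend only on $l$ and on $i$ modulo $mn$ (the entries involve $q^{i+\cdot}$, $\xi^{i+\cdot}$ and $\a_k(l,i)=(k)_q(q^i-q^{i+l-k})$, all of which are periodic of period $mn$ in $i$). Part (3) is handled in exactly the same fashion, using Lemma \ref{4.10}(2) in place of Lemma \ref{4.10}(1); here the socle of $\ol T_1(l,i)$ is also $M(l,i)$, so Lemma \ref{3.2}(2) again yields $l=l'$ and $i\equiv i'\pmod{mn}$.

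For part (1), I would argue by contradiction. Suppose $T_1(l,i)\cong \ol T_1(l',i')$. Comparing socles via Lemma \ref{4.10} gives $M(l,i)\cong M(l',i')$, so by Lemma \ref{3.2}(2), $l=l'$ and $i\equiv i'\pmod{mn}$. Comparing tops via Lemma \ref{4.10} then gives
\[M(n-l,i+l)\cong T_1(l,i)/{\rm soc}T_1(l,i)\cong \ol T_1(l',i')/{\rm soc}\ol T_1(l',i')\cong M(n-l',i'+l'-n),\]
and after substituting $l=l'$ and $i\equiv i'\pmod{mn}$, Lemma \ref{3.2}(2) forces $i+l\equiv i+l-n\pmod{mn}$, i.e.\ $mn\mid n$, i.e.\ $m=1$, contradicting the standing hypothesis $m\geq 2$.

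The main (minor) obstacle is simply remembering to use the hypothesis $m>1$ in part (1); without it, the tops $M(n-l,i+l)$ and $M(n-l,i+l-n)$ would coincide and $T_1(l,i)$ and $\ol T_1(l,i)$ could collapse into isomorphic modules. Parts (2) and (3) are then purely formal once Lemma \ref{3.2}(2) is invoked, and no new computation beyond reading off socle/top from Lemma \ref{4.10} is required.
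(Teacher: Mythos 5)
Your proof is correct, and since the paper simply asserts Lemma~\ref{4.11} with the remark ``Obviously, we have the following lemma,'' you are filling in the argument the paper leaves implicit; the socle/top comparison via Lemma~\ref{4.10} and Lemma~\ref{3.2}(2), together with periodicity of the matrix entries in $i$ modulo $mn$, is exactly the natural route. Your observation in part~(1) that the argument genuinely uses the standing hypothesis $m>1$ (via $mn\nmid n$) is a correct and worthwhile point that the paper's ``obviously'' glosses over.
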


\begin{proposition}\label{4.12}
Let $M$ be of $(1,1)$-type. Then there are $1\<l\<n-1$ and $i\in \mathbb{Z}$ such that $M\cong T_1(l,i)$ or $M\cong\ol{T}_1(l,i)$.
\end{proposition}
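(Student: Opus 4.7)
The plan is to realise $M$ explicitly inside its injective envelope, which Proposition \ref{4.1} forces to be a single indecomposable projective $P(l,i)$, and then to split into the two cases prescribed by the isomorphism type of $M/{\rm soc}M$.

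First, I would apply Proposition \ref{4.1}: since $M$ is indecomposable with ${\rm rl}(M)=2$ and ${\rm soc}M$ is simple, exactly one multiplicity in the decomposition is nonzero, so after relabelling the index we have ${\rm soc}M\cong M(l,i)$ and $I(M)\cong P(l,i)$ for some $1\<l\<n-1$ and $i\in\mathbb{Z}$. Fix an embedding $M\hookrightarrow P(l,i)$; then ${\rm soc}M={\rm soc}P(l,i)$. Work with the standard basis $\{v_1,\ldots,v_n,u_1,\ldots,u_n\}$ of $P(l,i)$ from Section \ref{s3} throughout.

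Next, analyse $M/{\rm soc}M$, which is a simple submodule of ${\rm soc}(P(l,i)/{\rm soc}P(l,i))=M_1\oplus M_2$, where (by the proof of Proposition \ref{3.8}) $M_1={\rm span}\{\ol{v_1},\ldots,\ol{v_{n-l}}\}\cong M(n-l,i+l-n)$ and $M_2={\rm span}\{\ol{u_{l+1}},\ldots,\ol{u_n}\}\cong M(n-l,i+l)$. Since $M_1\ncong M_2$ by Lemma \ref{3.2}(3), any simple submodule of $M_1\oplus M_2$ lies entirely in one of the summands (otherwise both projections would be isomorphisms, forcing $M_1\cong M_2$). This yields a clean dichotomy.

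If $M/{\rm soc}M$ lies in $M_2$, then $M$ coincides with the preimage of $M_2$ under the quotient map $P(l,i)\ra P(l,i)/{\rm soc}P(l,i)$, namely the submodule $\langle u_1,\ldots,u_n\rangle$ of $P(l,i)$; this follows at once by length comparison. Reading the $H_{mn}(\xi)$-action off the standard basis formulas in the ordered basis $(u_1,\ldots,u_n)$ yields precisely the matrices defining $T_1(l,i)$ in Lemma \ref{4.10}(1), so $M\cong T_1(l,i)$. If instead $M/{\rm soc}M$ lies in $M_1$, then similarly $M$ equals $V:={\rm span}\{v_1,\ldots,v_{n-l},u_1,\ldots,u_l\}$; stability of $V$ under $a$ rests on the vanishings $\g_{n-l}(l,i)=0$ and $\a_l(l,i)=0$ from Lemma \ref{3.6}(1,2), and again $M=V$ by length comparison. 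In the ordered basis $(v_1,\ldots,v_{n-l},u_1,\ldots,u_l)$, the action matches the matrices defining $\ol{T}_1(l,i)$ in Lemma \ref{4.10}(2), using Lemma \ref{3.6}(6,7) to rewrite entries such as $\g_k(l,i)=\a_k(n-l,i+l-n)$. Hence $M\cong\ol{T}_1(l,i)$. The only substantive work is this final matrix identification, but no conceptual obstacle arises once $M$ has been pinned down inside $P(l,i)$.
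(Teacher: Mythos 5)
Your proof is correct and follows essentially the same route as the paper: embed $M$ in its injective envelope $P(l,i)$, observe that the simple top $M/{\rm soc}M$ must lie wholly in one of the two non-isomorphic simple summands $M_1$ or $M_2$ of ${\rm soc}^2P(l,i)/{\rm soc}P(l,i)$, and then identify the resulting explicit submodule with $\ol{T}_1(l,i)$ or $T_1(l,i)$ via the standard basis and Lemma~\ref{3.6}. The minor variations (invoking Proposition~\ref{4.1} rather than Corollary~\ref{3.10} to pin down $I(M)$, and phrasing the final identification as a length comparison) are cosmetic.
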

\begin{proof}
Since $M$ is of $(1,1)$-type, there are $1\<l\<n-1$ and $i\in \mathbb{Z}$ such that
${\rm soc}M\cong M(l,i)$. Hence $P(l,i)$ is an injective envelope of $M$. We may assume that $M$ is a submodule of $P(l,i)$. Then ${\rm soc}M={\rm soc}P(l,i)$, $M/{\rm soc}M\subset{\rm soc}^2P(l,i)/{\rm soc}P(l,i)$ and $M\subset{\rm soc}^2P(l,i)$.
Let $v_1,v_2,\cdots, v_n, u_1,u_2,\cdots,u_n$ be the standard basis of $P(l,i)$. Then by the proof of Proposition \ref{3.8}, ${\rm soc}P(l,i)={\rm span}\{u_1,u_2,\cdots,u_l\}\cong M(l,i)$ and ${\rm soc}^2P(l,i)/{\rm soc}P(l,i)=M_1\oplus M_2$, where
$$\begin{array}{l}
M_1:={\rm span}\{v_j+{\rm soc}P(l,i)|1\<j\<n-l\}\cong M(n-l,i+l-n),\\
M_2:={\rm span}\{u_j+{\rm soc}P(l,i)|l+1\<j\<n\}\cong M(n-l,i+l).\\
\end{array}$$
By Lemma \ref{3.2}, $M_1$ and $M_2$ are simple submodules of ${\rm soc}^2P(l,i)/{\rm soc}P(l,i)$ and $M_1\ncong M_2$. Since $M/{\rm soc}M$ is a simple submodule of ${\rm soc}^2P(l,i)/{\rm soc}P(l,i)$, $M/{\rm soc}M=M_1$ or $M/{\rm soc}M=M_2$.

If $M/{\rm soc}M=M_1$, then $M={\rm span}\{v_1,v_2,\cdots,v_{n-l},u_1,u_2,\cdots,u_l\}$. In this case, by Lemma \ref{3.6}(6,7), one can check that $M\cong\ol{T}_1(l,i)$.
If $M/{\rm soc}M=M_2$, then $M={\rm span}\{u_1,u_2,\cdots,u_n\}$. In this case, one can check that $M\cong T_1(l,i)$.
\end{proof}

\begin{remark}\label{4.13}
Let $1\<l\<n-1$ and $i\in \mathbb{Z}$. By Proposition \ref{4.12} and its proof, we have
$$\begin{array}{ll}
\Omega^{-1}T_1(l,i)\cong T_1(n-l,i+l-n),& \Omega T_1(l,i)\cong T_1(n-l,i+l),\\
\Omega^{-1}\ol{T}_1(l,i)\cong\ol{T}_1(n-l,i+l),& \Omega\ol{T}_1(l,i)\cong\ol{T}_1(n-l,i+l-n).\\
\end{array}$$
Hence $\Omega^2T_1(l,i)\cong T_1(l,i+n)$ and $\Omega^2\ol{T}_1(l,i)\cong\ol{T}_1(l,i-n)$. Moreover, one can check that ${\rm End}_{H_{mn}(\xi)}(T_1(l,i))\cong\Bbbk$, ${\rm End}_{H_{mn}(\xi)}(\ol{T}_1(l,i))\cong\Bbbk$ and
$${\rm Hom}_{H_{mn}(\xi)}(T_1(l,i),T_1(l,i+kn))=0,\ {\rm Hom}_{H_{mn}(\xi)}(\ol{T}_1(l,i),\ol{T}_1(l,i+kn))=0,$$
for all $k\in\mathbb Z$ with $m\nmid k$.
\end{remark}

\begin{lemma}\label{4.14}
Let $M$ and $N$ be nonzero indecomposable $H_{mn}(\xi)$-modules and assume there is an irreducible morphism $f:M\rightarrow N$. Then $M$ is of $(t,t)$-type for some $t$ if and only if $N$ is of $(s,s)$-type for some $s$.
\end{lemma}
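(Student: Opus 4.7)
The plan is to apply Auslander--Reiten theory. Recall that an irreducible morphism $f\colon M\to N$ between indecomposables forces $M$ to be an indecomposable summand of the middle term of the Auslander--Reiten sequence ending at $N$ (when $N$ is non-projective), or an indecomposable summand of ${\rm rad}\,N$ (when $N$ is projective); the symmetric statement applies to $N$ relative to $M$. Thus it suffices to analyze these middle terms and radicals and see that the $(t,t)$-type modules form a separate part of the AR-quiver.

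By Lemma \ref{3.12} and Corollary \ref{4.6}, every indecomposable $H_{mn}(\xi)$-module lies in exactly one of the following classes: (a) simples $M(l,i)$ or $Z(\lambda_{i'j'})$; (b) non-simple projectives $P(l,i)$, which are also injective because $H_{mn}(\xi)$ is symmetric; (c) non-homogeneous modules $\Omega^{\pm s}M(l,i)$ with $s\geq 1$; (d) modules of $(t,t)$-type. I will call (a), (b), (c) collectively the \emph{exceptional} indecomposables and show that an irreducible morphism into or out of an exceptional indecomposable can involve only exceptional indecomposables.

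Reading off Lemma \ref{4.7}, every Auslander--Reiten sequence whose right-hand term is an exceptional non-projective indecomposable (a simple $M(l,i)$ or some $\Omega^{\pm s}M(l,i)$) has middle term whose indecomposable summands are either simples, modules $\Omega^{\pm k}M(\cdot,\cdot)$, or a single copy of $P(l,i)$, all of which are exceptional. For the projective-injective $P(l,i)$, the proof of Proposition \ref{3.8} shows ${\rm rad}\,P(l,i)\cong \Omega M(l,i)$ and $P(l,i)/{\rm soc}\,P(l,i)\cong \Omega^{-1}M(l,i)$, both indecomposable; hence all irreducible morphisms into $P(l,i)$ come from $\Omega M(l,i)$, and all irreducible morphisms out of $P(l,i)$ go to $\Omega^{-1}M(l,i)$. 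The simple projective-injectives $Z(\lambda_{i'j'})$ admit no irreducible morphisms to or from any other indecomposable. Thus irreducible morphisms into or out of an exceptional indecomposable never involve a $(t,t)$-type module.

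Now suppose $M$ is of $(t,t)$-type and $f\colon M\to N$ is irreducible. Since $M$ has Loewy length $2$ it is non-projective, so either the AR-sequence ending at $N$ exists (if $N$ is non-projective) and $M$ is a summand of its middle term, or $M$ is a summand of ${\rm rad}\,N$ (if $N$ is projective). In either case, if $N$ were exceptional the previous paragraph would force $M$ to be exceptional too, contradicting that $M$ is of $(t,t)$-type. Hence $N$ is of $(s,s)$-type. The converse direction is dual, using the AR-sequence starting at $M$ (or the quotient $M/{\rm soc}\,M$ when $M$ is projective-injective). The step requiring the most care will be verifying that Lemma \ref{4.7} exhausts the AR-sequences whose right-hand term is exceptional and non-projective, and justifying the description of irreducible morphisms into and out of $P(l,i)$ from the indecomposability of ${\rm rad}\,P(l,i)$ and $P(l,i)/{\rm soc}\,P(l,i)$ established in the proof of Proposition \ref{3.8}.
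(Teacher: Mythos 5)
Your proof is correct and follows essentially the same route as the paper, whose own proof is just the citation of Lemma \ref{3.12}, Corollary \ref{4.6}, Lemma \ref{4.7} and [ARS, Theorem V.5.3]; you have simply spelled out how those ingredients combine (the exhaustive classification into ``exceptional'' versus $(t,t)$-type indecomposables, the uniqueness of the AR-sequence ending/starting at a given indecomposable, and the radical/socle description of irreducible maps at the projective-injectives $P(l,i)$ and $Z(\lambda_{i'j'})$).
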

\begin{proof}
It follows from Lemma \ref{3.12}, Corollary \ref{4.6}, Lemma \ref{4.7} and \cite[Theorem V.5.3]{ARS}.
\end{proof}

Let $1\<l\<n-1$ and $i\in \mathbb{Z}$. Since $DTr T_1(l,i-n)\cong \Omega^2 T_1(l,i-n)\cong T_1(l,i)$, one gets an Auslander-Reiten sequence (AR-sequence for short)
\begin{eqnarray*}
0\rightarrow T_1(l,i)\rightarrow M\rightarrow T_1(l,i-n)\rightarrow 0.
\end{eqnarray*}
If $M$ has a nontrivial indecomposable direct summand $N$, then there is an irreducible morphism $f: T_1(l,i)\rightarrow N$. By Lemma \ref{4.14} and $l(M)=4$, $N$ is of $(1,1)$-type. Any irreducible morphism is either a monomorphism or an epimorphism, but it is not an isomorphism. However, $l(T_1(l,i))=l(N)$. It is absurd. Hence $M$ is an indecomposable module of $(2,2)$-type. Denote $M$ by $T_2(l,i)$. Thus, we have an AR-sequence
\begin{eqnarray}\eqlabel{e1}
0\rightarrow T_1(l,i)\xrightarrow{f_{1}} T_2(l,i)\xrightarrow{g_{1}} T_1(l,i-n)\rightarrow 0.
\end{eqnarray}
Clearly, ${\rm soc}T_2(l,i)\cong M(l,i)\oplus M(l,i-n)$ and $T_2(l,i)/{\rm soc}T_2(l,i)\cong M(n-l,i+l)\oplus M(n-l,i+l-n)$.

\begin{lemma}\label{4.15}
${\rm Im}f_{1}$ is the only submodule of $(1,1)$-type contained in $T_2(l,i)$.
\end{lemma}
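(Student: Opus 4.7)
The plan is to compare any putative submodule $N \subseteq T_2(l,i)$ of $(1,1)$-type with $L := \mathrm{Im}(f_1) \cong T_1(l,i)$, which is itself of $(1,1)$-type, and show via a length/intersection argument that $N = L$. The key tool will be Lemma \ref{4.9}, which applies because $T_2(l,i)$ is an indecomposable module of $(2,2)$-type (so that the hypothesis on submodules $L$ and $N$ with $l(X/\mathrm{soc}X)=l(\mathrm{soc}X)$ is available).

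First I would observe that both $L$ and $N$ satisfy $l(X/\mathrm{soc}X)=l(\mathrm{soc}X)=1$, since they are both of $(1,1)$-type. Hence Lemma \ref{4.9} applies to the pair $(L,N)$ inside $T_2(l,i)$ and yields the equality
\[
l\bigl((L\cap N)/\mathrm{soc}(L\cap N)\bigr) = l\bigl(\mathrm{soc}(L\cap N)\bigr).
\]
Since each of $L$ and $N$ has length $2$ and is of Loewy length $2$, the intersection $L\cap N$ has length $0$, $1$, or $2$. I would then examine each case in turn.

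The case $l(L\cap N)=1$ is ruled out immediately: then $L\cap N$ is simple, so $l((L\cap N)/\mathrm{soc}(L\cap N))=0$ while $l(\mathrm{soc}(L\cap N))=1$, contradicting the equality from Lemma \ref{4.9}. The case $l(L\cap N)=0$ gives $L+N = L\oplus N$, whose length is $4 = l(T_2(l,i))$, forcing $T_2(l,i) = L\oplus N$; this contradicts the indecomposability of $T_2(l,i)$. Therefore $l(L\cap N)=2$, and since $L\cap N\subseteq L$ with both of length $2$, we conclude $L\cap N = L = N$, as required.

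The main obstacle I anticipate is really just the careful bookkeeping above; the essential structural input — that any $(1,1)$-type submodule must share its socle structure with $L$ in a way controlled by Lemma \ref{4.9} — is already packaged for us by the earlier results. There is nothing delicate beyond verifying that $T_2(l,i)$ is $(2,2)$-type (immediate from the AR-sequence \equref{e1} and the descriptions of $\mathrm{soc}T_2(l,i)$ and $T_2(l,i)/\mathrm{soc}T_2(l,i)$ noted just before Lemma \ref{4.15}) and tracking the three possible lengths of $L\cap N$.
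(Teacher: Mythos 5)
Your proof is correct and follows essentially the same route as the paper: both apply Lemma \ref{4.9} to the pair $N$ and ${\rm Im}f_1$ inside the indecomposable $(2,2)$-type module $T_2(l,i)$ to conclude that the intersection is either $0$ or all of $N$, and then rule out the zero case via the forced direct sum decomposition contradicting indecomposability. Your explicit three-way case split on $l(L\cap N)$ just spells out what the paper compresses into one line.
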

\begin{proof}
Let $N$ be a submodule of $(1,1)$-type of $T_2(l,i)$. Then by Lemma \ref{4.9},
$$l((N\cap{\rm Im}f_1)/{\rm soc}(N\cap{\rm Im}f_1))=l({\rm soc}(N\cap{\rm Im}f_1)).$$
Hence either $N\cap{\rm Im}f_1=0$ or $N\cap{\rm Im}f_1=N$. If $N\cap{\rm Im}f_1=0$, then $T_2(l,i)=N\oplus{\rm Im}f_1$, a contradiction. Hence $N\cap{\rm Im}f_1=N$, and so $N={\rm Im}f_1$.
\end{proof}

By applying $\Omega^2$ to the sequence \equref{e1}, one gets another AR-sequence
\begin{eqnarray*}
0\rightarrow\Omega^2T_1(l,i)\rightarrow\Omega^2T_2(l,i)\rightarrow\Omega^2 T_1(l,i-n)\rightarrow 0.
\end{eqnarray*}
Hence $\Omega^2T_2(l,i)\cong T_2(l,i+n)$ by $\Omega^2T_1(l,i)\cong T_1(l,i+n)$ and $\Omega^2T_1(l,i-n)\cong T_1(l,i)$.
By \cite[Proposiotns V.1.12 and 5.3]{ARS}, there is a unique module $T_3(l,i)$ up to isomorphism such that there is an AR-sequence
\begin{eqnarray*}
0\rightarrow T_2(l,i)\xrightarrow{\left(
                                     \begin{matrix}
                                       g_1 \\
                                       f_2 \\
                                     \end{matrix}
                                   \right)
} T_1(l,i-n)\oplus T_3(l,i)\xrightarrow{(f'_1, g_2)} T_2(l,i-n)\rightarrow 0.
\end{eqnarray*}

\begin{lemma}\label{4.16}
$T_3(l,i)$ is an indecomposable module of $(3,3)$-type with ${\rm soc}T_3(l,i)\cong\oplus_{k=0}^2M(l,i-kn)$, $T_3(l,i)/{\rm soc}T_3(l,i)\cong\oplus_{k=0}^2M(n-l,i+l-kn)$ and $\Omega^2 T_3(l,i)\cong T_3(l,i+n)$. Moreover, ${\rm Im}(f_2f_1)$ and ${\rm Im}f_2$ are only submodules of $(1,1)$-type and $(2,2)$-type, respectively, and $T_3(l,i)/{\rm Im}(f_2f_1)\cong T_2(l,i-n)$ and $T_3(l,i)/{\rm Im}f_2\cong T_1(l,i-2n)$.
\end{lemma}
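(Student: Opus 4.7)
The plan is to read off every claim of the lemma from the defining almost split sequence by combining the AR-theoretic machinery of \cite[Proposition V.5.3]{ARS} with the uniqueness result Lemma~\ref{4.15} on $T_2$ and the intersection principle of Lemma~\ref{4.9}. First, $T_3(l,i)$ is indecomposable as a summand of the middle term of an AR-sequence, and the additivity of length in short exact sequences gives $l(T_3(l,i))=l(T_2(l,i))+l(T_2(l,i-n))-l(T_1(l,i-n))=6$. Since $f_2$ is irreducible with source of $(2,2)$-type, Lemma~\ref{4.14} forces $T_3(l,i)$ to be of $(s,s)$-type, and $l(T_3(l,i))=6$ gives $s=3$. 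By length comparison $f_2$ is a monomorphism and $g_2$ an epimorphism, so ${\rm Im}(f_2)\cong T_2(l,i)$ is a $(2,2)$-submodule and ${\rm Im}(f_2f_1)\cong T_1(l,i)$ is a $(1,1)$-submodule of $T_3(l,i)$.

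A short diagram chase on the AR-sequence, using $g_2f_2=-f_1'g_1$, identifies $\ker g_2$ with ${\rm Im}(f_2f_1)$, yielding $T_3(l,i)/{\rm Im}(f_2f_1)\cong T_2(l,i-n)$. Moreover $g_2({\rm Im}(f_2))={\rm Im}(f_1')$, so the shift by $-n$ of \equref{e1} delivers $T_3(l,i)/{\rm Im}(f_2)\cong T_2(l,i-n)/{\rm Im}(f_1')\cong T_1(l,i-2n)$. The socle then falls out of the exact sequence $0\to {\rm Im}(f_2)\to T_3(l,i)\to T_1(l,i-2n)\to 0$: the intersection ${\rm soc}T_3(l,i)\cap{\rm Im}(f_2)$ equals ${\rm soc}({\rm Im}(f_2))\cong M(l,i)\oplus M(l,i-n)$, the image of ${\rm soc}T_3(l,i)$ in $T_1(l,i-2n)$ is semisimple and hence lies in ${\rm soc}T_1(l,i-2n)\cong M(l,i-2n)$, and a length count against $l({\rm soc}T_3(l,i))=3$ forces ${\rm soc}T_3(l,i)\cong\oplus_{k=0}^2 M(l,i-kn)$. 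Since $T_3(l,i)/{\rm soc}T_3(l,i)$ is semisimple, the induced exact sequence $0\to {\rm Im}(f_2)/{\rm soc}{\rm Im}(f_2)\to T_3(l,i)/{\rm soc}T_3(l,i)\to T_1(l,i-2n)/{\rm soc}T_1(l,i-2n)\to 0$ splits and gives the claimed top. For $\Omega^2T_3(l,i)\cong T_3(l,i+n)$, I would apply $\Omega^2$ to the AR-sequence: as $H_{mn}(\xi)$ is symmetric this is again an AR-sequence, now terminating in $T_2(l,i)$, and its middle summand besides $T_1(l,i)$ is forced by uniqueness of AR-sequences to be $T_3(l,i+n)$.

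The main obstacle is the submodule uniqueness. For a $(1,1)$-submodule $N\subseteq T_3(l,i)$, Lemma~\ref{4.9} applied to $N$ and $K:={\rm Im}(f_2f_1)$ makes $N\cap K$ balanced, so $l(N\cap K)\in\{0,2\}$ inside $K$; the case $N\cap K=K$ immediately gives $N=K$. If $N\cap K=0$, then $g_2|_N$ injects $N$ as a $(1,1)$-submodule of $T_2(l,i-n)$, which by Lemma~\ref{4.15} must be ${\rm Im}(f_1')$; so $N\subseteq g_2^{-1}({\rm Im}(f_1'))$, and equal-length comparison shows this preimage equals ${\rm Im}(f_2)$. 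A second application of Lemma~\ref{4.15} inside ${\rm Im}(f_2)\cong T_2(l,i)$ then forces $N=K$, a contradiction. For a $(2,2)$-submodule $N'\neq{\rm Im}(f_2)$, Lemma~\ref{4.9} together with a length count rules out $l(N'\cap{\rm Im}(f_2))\in\{0,4\}$, leaving $l(N'\cap{\rm Im}(f_2))=2$; the already established $(1,1)$-uniqueness inside ${\rm Im}(f_2)$ gives $N'\cap{\rm Im}(f_2)=K\subseteq\ker g_2$, and $N'+{\rm Im}(f_2)=T_3(l,i)$. Applying $g_2$ produces $g_2(N')\oplus{\rm Im}(f_1')=T_2(l,i-n)$ as a sum of length-$2$ submodules of the length-$4$ module $T_2(l,i-n)$, contradicting its indecomposability. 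Threading Lemmas~\ref{4.9} and~\ref{4.15} together in this nested fashion is the delicate step.
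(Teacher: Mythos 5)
Your overall route — deriving everything from the defining almost split sequence, combining Lemma~\ref{4.14} with the intersection principle of Lemma~\ref{4.9} and the $(1,1)$-uniqueness of Lemma~\ref{4.15} — is the same as what the paper intends (it just defers to \cite[Lemma 4.10]{Ch3}). Your computations of the socle, the top, the quotients $T_3(l,i)/{\rm Im}(f_2f_1)\cong T_2(l,i-n)$ and $T_3(l,i)/{\rm Im}f_2\cong T_1(l,i-2n)$, the submodule uniqueness arguments, and the $\Omega^2$ shift are all correct and carefully threaded.

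However, there is a genuine gap at the very first step: you assert that ``$T_3(l,i)$ is indecomposable as a summand of the middle term of an AR-sequence.'' This is not a valid principle — a direct complement of one indecomposable summand in the middle term of an almost split sequence can perfectly well decompose further. Indecomposability of $T_3(l,i)$ is a substantive claim, and without it your next sentence is circular: Lemma~\ref{4.14} is stated for indecomposable modules, so you cannot invoke it to conclude $T_3(l,i)$ is of $(s,s)$-type until you already know $T_3(l,i)$ is indecomposable. The correct argument mirrors the one the paper gives for $T_2(l,i)$ before Lemma~\ref{4.15} and for $M_3(l,i,\eta)$ in Lemma~\ref{4.35}(1): assume $T_3(l,i)$ has a nontrivial indecomposable direct summand $N$; then the component of $g_2$ restricted to $N$ is an irreducible morphism $N\to T_2(l,i-n)$, so by Lemma~\ref{4.14}, $N$ is of $(s,s)$-type with $s\in\{1,2\}$. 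If $s=2$ then $l(N)=l(T_2(l,i-n))$, so this irreducible mono or epi would be an isomorphism, which is absurd. Hence every indecomposable summand of $T_3(l,i)$ is of $(1,1)$-type, so each is sent by $g_2$ monically onto the unique $(1,1)$-submodule ${\rm Im}f_1'$ of $T_2(l,i-n)$ (Lemma~\ref{4.15}); then ${\rm Im}(f_1',g_2)\subseteq{\rm Im}f_1'\subsetneq T_2(l,i-n)$, contradicting surjectivity. Only after this is established may Lemma~\ref{4.14} and the length count be used to conclude that $T_3(l,i)$ is of $(3,3)$-type, and the rest of your argument then goes through.
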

\begin{proof}
It is similar to \cite[Lemma 4.10]{Ch3} by using Lemmas \ref{4.9}, \ref{4.14} and \ref{4.15}.
\end{proof}

Now suppose $t\>3$ and we have got indecomposable modules
$$T_1(l,i), T_2(l,i),\cdots,  T_t(l,i)$$
and AR-sequences
\begin{eqnarray*}
&0\rightarrow T_1(l,i)\xrightarrow{f_1}T_2(l,i)\xrightarrow{g_1} T_1(l,i-n)\rightarrow 0,\\
&0\rightarrow T_{s-1}(l,i)\xrightarrow{\left(\begin{matrix}
                                       g_{s-2} \\
                                       f_{s-1} \\
                                     \end{matrix}\right)}
T_{s-2}(l,i-n)\oplus T_s(l,i)\xrightarrow{(f'_{s-2},g_{s-1})}
T_{s-1}(l,i-n)\rightarrow 0
\end{eqnarray*}
for all $3\<s\<t$, where $1\<l\<n-1$ and $i\in\mathbb{Z}$. They satisfy the following:
\begin{enumerate}
\item[(1)] $T_s(l,i)$ is of $(s,s)$-type with ${\rm soc}T_s(l,i)\cong\oplus_{k=0}^{s-1}M(l,i-kn)$ and\\
     $T_s(l,i)/{\rm soc}T_s(l,i)\cong \oplus_{k=0}^{s-1}M(n-l,i+l-kn)$;
\item[(2)] ${\rm Im}(f_{s-1}\cdots f_j$) is the only submodule of $(j,j)$-type of $T_s(l,i)$ and\\
    $T_s(l,i)/{\rm Im}(f_{s-1}\cdots f_j)\cong T_{s-j}(l,i-jn)$ for all $1\<j<s$;
\item[(3)] $\Omega^2T_s(l,i)\cong T_s(l,i+n)$,
\end{enumerate}
for all $1\<s\<t$. Again by \cite[Proposiotns V.1.12 and 5.3]{ARS}, there is a unique module $T_{t+1}(l,i)$ up to isomorphism such that there is an AR-sequence
\begin{eqnarray*}
0\rightarrow T_t(l,i)\xrightarrow{\left(
                                     \begin{matrix}
                                       g_{t-1} \\
                                       f_t \\
                                     \end{matrix}
                                   \right)
} T_{t-1}(l,i-n)\oplus T_{t+1}(l,i)\xrightarrow{(f'_{t-1}, g_t)} T_t(l,i-n)\rightarrow 0.
\end{eqnarray*}
Then similarly to Lemma \ref{4.16}, we have the following lemma.

\begin{lemma}\label{4.17}
$T_{t+1}(l,i)$ is an indecomposable module of $(t+1,t+1)$-type with ${\rm soc}T_{t+1}(l,i)\cong \oplus_{k=0}^tM(l,i-kn)$, $T_{t+1}(l,i)/{\rm soc}T_{t+1}(l,i)\cong \oplus_{k=0}^tM(n-l,i+l-kn)$ and $\Omega^2T_{t+1}(l,i)\cong T_{t+1}(l,i+n)$. Moreover, ${\rm Im}(f_t\cdots f_j)$ is the unique submodule of $(j,j)$-type of $T_{t+1}(l,i)$ and $T_{t+1}(l,i)/{\rm Im}(f_t\cdots f_j)\cong T_{t+1-j}(l,i-jn)$ for all $1\<j\<t$.
\end{lemma}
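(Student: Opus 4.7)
I would prove this by induction on $t$, with Lemma~\ref{4.16} serving as the base case $t=2$. The inductive hypothesis supplies properties (1)--(3) together with the submodule-uniqueness and quotient statements of Lemma~\ref{4.17} for $T_s(l,i)$ with $s\<t$. From the defining AR-sequence, two conclusions are immediate: the length count $l(T_{t+1}(l,i))=l(T_t(l,i))+l(T_t(l,i-n))-l(T_{t-1}(l,i-n))=2(t+1)$, and, after applying $\Omega^2$ and using the inductive $\Omega^2T_s(l,j)\cong T_s(l,j+n)$, the isomorphism $\Omega^2T_{t+1}(l,i)\cong T_{t+1}(l,i+n)$ via Krull--Schmidt uniqueness of the middle term of the resulting AR-sequence (an AR-sequence with left end $T_t(l,i+n)$ already constructed at stage $t$).

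The crux is indecomposability of $T_{t+1}(l,i)$. Suppose $T_{t+1}(l,i)=U\oplus V$ nontrivially. By Lemma~\ref{4.14}, each summand is of $(s_j,s_j)$-type, and the length count gives $s_U+s_V=t+1$. The component $f_t\colon T_t(l,i)\to T_{t+1}(l,i)$ of the AR-embedding is an irreducible morphism between indecomposables of strictly increasing length, hence must be a monomorphism. By the inductive uniqueness of $(j,j)$-type submodules inside $T_t(l,i)$, the compositions $f_t\cdots f_j\colon T_j(l,i)\to T_{t+1}(l,i)$ produce a strictly increasing chain ${\rm Im}(f_t\cdots f_1)\subset{\rm Im}(f_t\cdots f_2)\subset\cdots\subset{\rm Im}(f_t)$ of submodules of $(j,j)$-type inside $T_{t+1}(l,i)$. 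Lemma~\ref{4.9} then ensures that the intersection of each chain member with $U$ (and with $V$) is itself balanced; tracking the lengths of these intersections along the chain, in the spirit of Lemma~\ref{4.15}, leads to a contradiction with the $(s_U,s_U)\oplus(s_V,s_V)$-structure. This is the main obstacle, as the chain/intersection bookkeeping must be done carefully to close off every case.

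Once $T_{t+1}(l,i)$ is known to be indecomposable, Proposition~\ref{4.1} forces ${\rm soc}\,T_{t+1}(l,i)\cong\bigoplus_{k=0}^{m-1}s_kM(l,i_0+kn)$ for some $l,i_0$ and multiplicities; a Grothendieck-group tally on the defining AR-sequence, using the inductively known socles of $T_t(l,i)$, $T_t(l,i-n)$, $T_{t-1}(l,i-n)$, pins these parameters down and yields ${\rm soc}\,T_{t+1}(l,i)\cong\bigoplus_{k=0}^tM(l,i-kn)$. Corollary~\ref{4.4}(1) then gives the top $T_{t+1}(l,i)/{\rm soc}\,T_{t+1}(l,i)\cong\bigoplus_{k=0}^tM(n-l,i+l-kn)$.

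Finally, each $f_r$ is injective by the same mono/epi argument for irreducible morphisms, so $f_t\cdots f_j\colon T_j(l,i)\to T_{t+1}(l,i)$ is injective with image of $(j,j)$-type. A diagram chase through the tower of AR-sequences that built up the $T_s(l,i)$ identifies the quotient $T_{t+1}(l,i)/{\rm Im}(f_t\cdots f_j)\cong T_{t+1-j}(l,i-jn)$, and the uniqueness of ${\rm Im}(f_t\cdots f_j)$ as a $(j,j)$-type submodule is obtained by rerunning the Lemma~\ref{4.9}/Lemma~\ref{4.15} argument inside $T_{t+1}(l,i)$: any competitor $N$ would give $N\cap{\rm Im}(f_t\cdots f_j)$ of balanced type strictly below level $j$, and a length comparison against the established $(t+1,t+1)$-structure forces $N={\rm Im}(f_t\cdots f_j)$.
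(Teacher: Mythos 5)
Your overall strategy matches the paper's in spirit (the paper defers to \cite[Lemma 4.11]{Ch3}, and the detailed analogue in this paper is the proof of Lemma~\ref{4.36} for $M_{t+1}(l,i,\eta)$): induct on $t$, use the AR-sequence for length and $\Omega^2$-periodicity bookkeeping, and use Lemmas~\ref{4.8}, \ref{4.9}, \ref{4.14} and Corollary~\ref{4.4} to control socles, tops, and submodules. Your Grothendieck-group tally for the socle/top and the Krull--Schmidt argument for $\Omega^2 T_{t+1}(l,i)\cong T_{t+1}(l,i+n)$ are fine.

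However, there is a real gap in your indecomposability step. You invoke Lemma~\ref{4.9} to get that ${\rm Im}(f_t\cdots f_j)\cap U$ is balanced, but Lemma~\ref{4.9} is only proved for an \emph{indecomposable} ambient module of $(t,t)$-type (it relies on Lemma~\ref{4.8}, which also assumes indecomposability); applying it inside a hypothetically decomposable $T_{t+1}(l,i)$ is circular. Moreover, the proposed ``tracking the lengths of intersections along the chain'' cannot by itself deliver the contradiction; one still needs the surjectivity of the right-hand map $(f'_{t-1},g_t)$. The argument that actually closes (and that Lemma~\ref{4.36} uses) is the dual one: for any hypothetical indecomposable summand $N$ of $T_{t+1}(l,i)$, the component $g_t|_N\colon N\to T_t(l,i-n)$ is irreducible, hence mono or epi; it cannot be epi by length (and by the inductive analogue of item (4)), so it is mono, and then $g_t(N)$ is a proper balanced submodule of $T_t(l,i-n)$ which, by the inductive classification, lies in ${\rm Im}f'_{t-1}$; this forces ${\rm Im}(f'_{t-1},g_t)\subsetneqq T_t(l,i-n)$, contradicting exactness. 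A similar issue affects your uniqueness argument: ``$N\cap{\rm Im}(f_t\cdots f_j)$ balanced below level $j$ plus a length comparison'' is not enough; the correct route (once indecomposability is known, so Lemma~\ref{4.9} applies) is to show $N$ contains ${\rm Ker}g_t={\rm Im}(f_t\cdots f_1)$, pass to $T_{t+1}(l,i)/{\rm Ker}g_t\cong T_t(l,i-n)$, and invoke the inductive uniqueness there, exactly as in the proof of Lemma~\ref{4.36}(2).
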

\begin{proof}
It is similar to \cite[Lemma 4.11]{Ch3} by using Corollary \ref{4.4}, Lemmas \ref{4.8}, \ref{4.9} and \ref{4.14}.
\end{proof}

Summarizing the discussion above, one gets the following theorem.

\begin{theorem}\label{4.18}
For any $1\<l\<n-1$ and $i, t\in \mathbb{Z}$ with $t\>1$, there is an indecomposable $H_{mn}(\xi)$-module $T_t(l,i)$ of $(t,t)$-type. We have following properties:
\begin{enumerate}
\item[(1)] $\Omega^2T_t(l,i)\cong T_t(l,i+n)$,  ${\rm soc}T_t(l,i)\cong\oplus_{k=0}^{t-1}M(l,i-kn)$ and\\
$T_t(l,i)/{\rm soc}T_t(l,i)\cong\oplus_{k=0}^{t-1}M(n-l,i+l-kn)$.
\item[(2)] For any $1\<j<t$, $T_t(l,i)$ contains a unique submodule of $(j,j)$-type, which is isomorphic to $T_j(l,i)$ and the quotient module of $T_t(l,i)$ modulo the submodule of $(j,j)$-type is isomorphic to $T_{t-j}(l,i-jn)$.
\item[(3)] For any $1\<j<t$, the unique submodule of $(j,j)$-type of $T_t(l,i)$ is contained in that of $(j+1,j+1)$-type.
\item[(4)] There are AR-sequences
\begin{eqnarray*}
&0\ra T_1(l,i)\xrightarrow{f_1}
T_2(l,i)\xrightarrow{g_1}
T_1(l,i-n)\ra 0,\\
&0\ra T_t(l,i)\xrightarrow{\left(\begin{matrix}
                                       g_{t-1} \\
                                       f_t \\
                                     \end{matrix}\right)}
T_{t-1}(l,i-n)\oplus T_{t+1}(l,i)\xrightarrow{(f'_{t-1}, g_t)}
T_t(l,i-n)\ra 0\ (t\>2).
\end{eqnarray*}
\end{enumerate}
\end{theorem}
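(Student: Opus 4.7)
I would prove the theorem by induction on $t$, synthesizing the constructions and verifications that have already been carried out piece by piece in Lemmas \ref{4.14}--\ref{4.17}, so that the main work is to package the inductive step cleanly and track the four asserted properties simultaneously.

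For the base case $t=1$, the module $T_1(l,i)$ is the explicit module of Lemma \ref{4.10}, whose socle and top are read off directly, and whose $\Omega^2$-periodicity is recorded in Remark \ref{4.13}; properties (2) and (3) are vacuous. For (4) at $t=1$, I would use that $H_{mn}(\xi)$ is symmetric, so the Auslander--Reiten translate $DTr$ coincides with $\Omega^2$, and therefore $DTr\,T_1(l,i-n)\cong T_1(l,i)$ by Remark \ref{4.13}. The resulting almost split sequence
\[0\to T_1(l,i)\to M\to T_1(l,i-n)\to 0\]
has middle term of length $4$. By Lemma \ref{4.14} every indecomposable summand of $M$ is of $(s,s)$-type; any irreducible map between two distinct $(1,1)$-modules would have to be a bijection by length parity and hence could not be irreducible, so $M$ must itself be indecomposable and of $(2,2)$-type. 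Set $T_2(l,i):=M$, and invoke Lemma \ref{4.15} for the uniqueness clause of (2) at $t=2$.

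For the inductive step, assume $T_1(l,i),\dots,T_t(l,i)$ have been constructed satisfying (1)--(4). The AR-sequence ending at $T_t(l,i-n)$ starts at $\Omega^2 T_t(l,i-n)\cong T_t(l,i)$ by inductive hypothesis (1). Since the previous AR-sequence provides an irreducible morphism $g_{t-1}\colon T_t(l,i)\to T_{t-1}(l,i-n)$, the standard mesh/knitting argument (\cite[Propositions V.1.12 and V.5.3]{ARS}) forces $T_{t-1}(l,i-n)$ to be a summand of the new middle term, which therefore decomposes as $T_{t-1}(l,i-n)\oplus T_{t+1}(l,i)$ for a unique indecomposable $T_{t+1}(l,i)$. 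Lemma \ref{4.17} then verifies all four properties for $T_{t+1}(l,i)$, closing the induction.

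The technical heart, and the step I expect to be hardest, is the uniqueness and nesting assertions (2) and (3) at arbitrary $t$. These rest on Lemma \ref{4.9} (intersections of submodules satisfying $l(L/\mathrm{soc}\,L)=l(\mathrm{soc}\,L)$ still satisfy this identity) together with Lemma \ref{4.8} (no $(s+1,s)$-type submodule sits inside a $(t,t)$-type module): combining these, any two $(j,j)$-type submodules of $T_{t+1}(l,i)$ must have intersection of $(k,k)$-type for some $k\leq j$, and a length count forces one to be contained in the other, collapsing the entire family of such submodules into a single chain. Lemma \ref{4.14} is used along the way to ensure that the AR-sequence middle terms contain no stray summands of non-$(s,s)$-type. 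Once this rigidity is established, the socle/top identifications in (1) and the quotient isomorphisms $T_t(l,i)/\mathrm{Im}(f_{t-1}\cdots f_j)\cong T_{t-j}(l,i-jn)$ in (2) follow by tracking composition factors through the AR-sequence and using Lemma \ref{3.2} to distinguish the relevant simple summands.
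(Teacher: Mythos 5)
Your proposal is correct and takes essentially the same route as the paper: the paper's "proof" of \thref{4.18} is precisely the remark "Summarizing the discussion above," referring to the construction of $T_2(l,i)$ from the AR-sequence ending at $T_1(l,i-n)$, the inductive step via \cite[Propositions V.1.12 and V.5.3]{ARS} backed by Lemmas \ref{4.14}--\ref{4.17}, and the rigidity of $(j,j)$-submodules coming from Lemmas \ref{4.8} and \ref{4.9}. Your write-up faithfully reproduces this inductive/knitting scheme, with the one cosmetic caveat that when you say the complement to $T_{t-1}(l,i-n)$ in the middle term is "a unique indecomposable $T_{t+1}(l,i)$", the indecomposability is not automatic from the mesh and is exactly what the argument of Lemma \ref{4.17} (via Lemmas \ref{4.8}, \ref{4.9}, \ref{4.14}) supplies.
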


Starting from $\ol{T}_1(l,i)$, a similar discussion as above shows the following theorem.

\begin{theorem}\label{4.19}
For any $1\<l\<n-1$ and $i, t\in \mathbb{Z}$ with $t\>1$, there is an indecomposable $H_{mn}(\xi)$-module $\ol{T}_t(l,i)$ of $(t,t)$-type. We have following properties:
\begin{enumerate}
\item[(1)] $\Omega^2\ol{T}_t(l,i)\cong\ol{T}_t(l,i-n)$, ${\rm soc}\ol{T}_t(l,i)\cong \oplus_{k=0}^{t-1}M(l,i+kn)$ and\\ $\ol{T}_t(l,i)/{\rm soc}\ol{T}_t(l,i)\cong \oplus_{k=0}^{t-1}M(n-l,i+l+(k-1)n)$.
\item[(2)] For any $1\<j<t$, $\ol{T}_t(l,i)$ contains a unique submodule of $(j,j)$-type, which is isomorphic to $\ol{T}_j(l,i)$ and the quotient module of $\ol{T}_t(l,i)$ modulo the submodule of $(j,j)$-type is isomorphic to $\ol{T}_{t-j}(l,i+jn)$.
\item[(3)] For any $1\<j<t$, the unique submodule of $(j,j)$-type of $\ol{T}_t(l,i)$ is contained in that of $(j+1,j+1)$-type.
\item[(4)] There are AR-sequences
\begin{eqnarray*}
&0\ra\ol{T}_1(l,i)\xrightarrow{f_1}\ol{T}_2(l,i)\xrightarrow{g_1}
\ol{T}_1(l,i+n)\ra 0,\\
&0\ra\ol{T}_{t}(l,i)\xrightarrow{\left(\begin{matrix}
                                       g_{t-1} \\
                                       f_t \\
                                     \end{matrix}\right)}
\ol{T}_{t-1}(l,i+n)\oplus\ol{T}_{t+1}(l,i)\xrightarrow{(f'_{t-1}, g_t)}
\ol{T}_{t}(l,i+n)\ra 0\ (t\>2).
\end{eqnarray*}
\end{enumerate}
\end{theorem}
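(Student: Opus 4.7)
The plan is to mirror, essentially verbatim, the inductive construction of $T_t(l,i)$ carried out before Theorem~\ref{4.18} (together with Lemmas~\ref{4.15}, \ref{4.16} and \ref{4.17}), replacing $T_1(l,i)$ by $\ol{T}_1(l,i)$ throughout. The only change is the direction of the shift on $i$: whereas $\Omega^2T_1(l,i)\cong T_1(l,i+n)$ produces AR-sequences $0\to T_t(l,i)\to\cdots\to T_t(l,i-n)\to 0$, the identity $\Omega^2\ol{T}_1(l,i)\cong\ol{T}_1(l,i-n)$ from Remark~\ref{4.13} produces AR-sequences with the opposite shift on $i$, which is exactly what appears in statement~(4) of the theorem.

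For the base case, since $H_{mn}(\xi)$ is symmetric, $DTr\cong\Omega^2$ on indecomposable non-projective modules, hence $DTr\,\ol{T}_1(l,i+n)\cong\ol{T}_1(l,i)$. This gives an AR-sequence
$$0\ra\ol{T}_1(l,i)\xrightarrow{f_1}M\xrightarrow{g_1}\ol{T}_1(l,i+n)\ra 0.$$
If $M$ had a proper indecomposable direct summand $N$, Lemma~\ref{4.14} would force $N$ to be of $(s,s)$-type; a length count forces $s=1$, contradicting the fact that an irreducible morphism between indecomposables of equal length cannot exist. Hence $M$ is indecomposable of $(2,2)$-type, and we set $\ol{T}_2(l,i):=M$.

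For the inductive step, assume $\ol{T}_1(l,i),\ldots,\ol{T}_t(l,i)$ have been produced and satisfy the analogues of (1)--(3). By \cite[Propositions V.1.12 and V.5.3]{ARS} there is, up to isomorphism, a unique indecomposable module---which we name $\ol{T}_{t+1}(l,i)$---occurring as the second non-trivial summand of the AR-sequence
$$0\ra\ol{T}_t(l,i)\xrightarrow{\left(\begin{matrix}g_{t-1}\\ f_t\end{matrix}\right)}\ol{T}_{t-1}(l,i+n)\oplus\ol{T}_{t+1}(l,i)\xrightarrow{(f'_{t-1},g_t)}\ol{T}_t(l,i+n)\ra 0.$$
Additivity of dimension, socle and top along this short exact sequence, combined with Lemma~\ref{4.14}, shows that $\ol{T}_{t+1}(l,i)$ is of $(t+1,t+1)$-type with the socle and top claimed in~(1). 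Applying $\Omega^2$ to the entire sequence and using the inductive hypothesis together with Remark~\ref{4.13} yields $\Omega^2\ol{T}_{t+1}(l,i)\cong\ol{T}_{t+1}(l,i-n)$.

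The main obstacle, just as in Theorem~\ref{4.18}, is property~(2): uniqueness of a $(j,j)$-submodule of $\ol{T}_{t+1}(l,i)$ and identification of its quotient as $\ol{T}_{t+1-j}(l,i+jn)$. The arguments of Lemmas~\ref{4.15}--\ref{4.17} transfer with only cosmetic changes. The key inputs are Lemma~\ref{4.9}, which forces two submodules $L,N$ with $l(L/{\rm soc}L)=l({\rm soc}L)$ and $l(N/{\rm soc}N)=l({\rm soc}N)$ to intersect in a module of the same balanced shape (so they cannot be distinct direct complements of each other), and Lemma~\ref{4.8}, which forbids submodules $N\subseteq\ol{T}_{t+1}(l,i)$ with $l(N/{\rm soc}N)>l({\rm soc}N)$. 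Once uniqueness is settled, property~(3) is immediate from the construction, since the submodule $\ol{T}_j(l,i)\subset\ol{T}_{j+1}(l,i)$ built at the $j$-th step is automatically contained in the $(j+1,j+1)$-submodule of $\ol{T}_{t+1}(l,i)$ by that uniqueness.
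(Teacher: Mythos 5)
Your proposal is correct and follows exactly the route the paper takes: the paper's entire "proof" of Theorem~\ref{4.19} is the one-line remark preceding it, "Starting from $\ol{T}_1(l,i)$, a similar discussion as above shows the following theorem," and your write-up spells out that discussion, correctly tracking the reversed shift on $i$ coming from $\Omega^2\ol{T}_1(l,i)\cong\ol{T}_1(l,i-n)$ in Remark~\ref{4.13} and invoking the same machinery (Lemmas~\ref{4.8}, \ref{4.9}, \ref{4.14}--\ref{4.17} and \cite[Propositions V.1.12 and V.5.3]{ARS}).
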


\begin{corollary}\label{4.20}
Let $1\<l,l'\<n-1$ and $i,i',t,t'\in\mathbb{Z}$ with $t\>1$ and $t'\>1$.
\begin{enumerate}
\item[(1)] $T_t(l,i)\ncong\ol{T}_{t'}(l',i')$.
\item[(2)] $T_t(l,i)\cong T_{t'}(l',i')$ if and only if $t=t'$, $l=l'$ and $i\equiv i'$ $({\rm mod}\ mn)$.
\item[(3)] $\ol{T}_t(l,i)\cong\ol{T}_{t'}(l',i')$ if and only if $t=t'$, $l=l'$ and $i\equiv i'$ $({\rm mod}\ mn)$.
\end{enumerate}
\end{corollary}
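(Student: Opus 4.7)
The plan is to reduce all three parts to the already-established classification of $(1,1)$-type indecomposables in Lemma \ref{4.11}, by extracting from each $T_t(l,i)$ or $\ol{T}_t(l,i)$ the canonical $(1,1)$-type submodule supplied by Theorems \ref{4.18}(2) and \ref{4.19}(2).

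First I would observe that the $(s,s)$-type of an indecomposable module of radical length $2$ is an isomorphism invariant, since it is read off $l({\rm soc}M)$ and $l(M/{\rm soc}M)$. Any one of the hypothetical isomorphisms $T_t(l,i)\cong T_{t'}(l',i')$, $\ol{T}_t(l,i)\cong\ol{T}_{t'}(l',i')$, or $T_t(l,i)\cong\ol{T}_{t'}(l',i')$ therefore forces $t=t'$ immediately.

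Next, for (2) with $t\>2$, Theorem \ref{4.18}(2) guarantees that $T_t(l,i)$ contains a \emph{unique} submodule of $(1,1)$-type, and this submodule is isomorphic to $T_1(l,i)$; likewise $T_t(l',i')$ contains a unique such submodule, isomorphic to $T_1(l',i')$. An isomorphism $T_t(l,i)\rightarrow T_t(l',i')$ carries submodules to submodules of the same type, so it must carry $T_1(l,i)$ onto $T_1(l',i')$. Hence $T_1(l,i)\cong T_1(l',i')$, and Lemma \ref{4.11}(2) yields $l=l'$ and $i\equiv i'\pmod{mn}$; the converse is trivial. The case $t=1$ is Lemma \ref{4.11}(2) directly. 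Part (3) runs in exactly the same way, using Theorem \ref{4.19}(2) and Lemma \ref{4.11}(3). For (1), after matching $t=t'$, the same extraction (or Lemma \ref{4.11}(1) directly when $t=1$) would produce an isomorphism $T_1(l,i)\cong\ol{T}_1(l',i')$, which Lemma \ref{4.11}(1) rules out.

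I do not expect a genuine obstacle here: the argument is bookkeeping on top of the uniqueness statements in Theorems \ref{4.18}(2) and \ref{4.19}(2), which themselves were built up inductively along the AR-sequences. The only points to verify explicitly are the invariance of the $(s,s)$-type under isomorphism and the fact that any isomorphism must send the unique $(1,1)$-submodule on one side to the unique $(1,1)$-submodule on the other, both of which are immediate from functoriality and the fact that a module iso restricts to an iso on corresponding submodules.
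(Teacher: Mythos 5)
Your argument is correct and is essentially the paper's own proof: the paper likewise deduces $t=t'$ from $l(T_t(l,i))=2t$ (equivalent to your type-invariance observation) and then invokes Theorems \ref{4.18}(2), \ref{4.19}(2) together with Lemma \ref{4.11} to reduce to the $(1,1)$-type classification via the unique $(1,1)$-type submodule. You have merely spelled out the details the paper leaves implicit.
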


\begin{proof}
It follows from Lemma \ref{4.11}, Theorems \ref{4.18}(2) and \ref{4.19}(2),   $l(T_t(l,i))=2t$ and $l(\ol{T}_{t'}(l',i'))=2t'$.
\end{proof}

\begin{proposition}\label{4.21}
Let $M$ be an indecomposable $H_{mn}(\xi)$-module of $(t,t)$-type. If $M$ contains a submodule of $(1,1)$-type, then $M\cong T_t(l,i)$ or $M\cong\ol{T}_t(l,i)$ for some $1\<l\<n-1$ and $i\in \mathbb{Z}$.
\end{proposition}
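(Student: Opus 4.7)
I would induct on $t$. The base case $t=1$ is exactly Proposition \ref{4.12}, since the modules $T_1(l,i)$ and $\ol{T}_1(l,i)$ of Theorems \ref{4.18} and \ref{4.19} (specialized to $t=1$) are precisely the two families produced by Proposition \ref{4.12}. For $t\>2$, Proposition \ref{4.12} forces the hypothesized $(1,1)$-submodule of $M$ to be isomorphic to $T_1(l,i)$ or $\ol{T}_1(l,i)$ for some $1\<l\<n-1$ and $i\in\mathbb{Z}$. A dual argument, using Theorem \ref{4.19} in place of Theorem \ref{4.18} throughout, handles the latter case and yields $M\cong\ol{T}_t(l,i)$, so I may assume $M$ contains a submodule isomorphic to $T_1(l,i)$, with the goal $M\cong T_t(l,i)$.

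The plan is then to show by a second induction on $s$ that for each $1\<s\<t$ there is an embedding $\iota_s\colon T_s(l,i)\hookrightarrow M$. Since $l(T_t(l,i))=2t=l(M)$, the instance $s=t$ gives the required isomorphism. The step $s\to s+1$ (for $s<t$) hinges on the AR-sequence of Theorem \ref{4.18}(4):
$$0\ra T_s(l,i)\xrightarrow{\binom{g_{s-1}}{f_s}}T_{s-1}(l,i-n)\oplus T_{s+1}(l,i)\ra T_s(l,i-n)\ra 0$$
(with the convention $T_0:=0$, $g_0:=0$ for the $s=1$ instance). Because $l(T_s(l,i))<l(M)$ and $M$ is indecomposable, $\iota_s$ is not a split monomorphism, so by the defining property of AR-sequences it factors through the middle term: there exist $h_1\colon T_{s-1}(l,i-n)\ra M$ and $h_2\colon T_{s+1}(l,i)\ra M$ with $h_1 g_{s-1}+h_2 f_s=\iota_s$. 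The crucial subclaim is $\ker h_2=0$, in which case $\iota_{s+1}:=h_2$ completes the induction.

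To establish the subclaim I would argue by contradiction, exploiting Lemma \ref{4.8}: the image $h_2(T_{s+1}(l,i))\cong T_{s+1}(l,i)/\ker h_2$ is a submodule of $M$, which Lemma \ref{4.8} forbids from having any $(j+1,j)$-type subquotient. For the initial instance $s=1$ the contradiction is transparent: $h_2 f_1=\iota_1$ is injective, so $\ker h_2\cap\mathrm{Im}(f_1)=0$; by Lemma \ref{4.15} the subspace $\mathrm{Im}(f_1)$ is the only $(1,1)$-submodule of $T_2(l,i)$, and analysing $g_1(\ker h_2)\subseteq T_1(l,i-n)$ together with the indecomposability of $T_2(l,i)$ (which excludes $g_1(\ker h_2)=T_1(l,i-n)$, as that would split $T_2(l,i)$) forces $\ker h_2\cong M(l,i-n)$; but then $T_2(l,i)/\ker h_2$ is a $(2,1)$-type submodule of $M$, contradicting Lemma \ref{4.8}.

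The main obstacle is extending the subclaim to general $s\>2$, where $\ker h_2$ can a priori be any submodule of $T_{s+1}(l,i)$. To handle this, I would combine the nested chain $T_1(l,i)\subset T_2(l,i)\subset\cdots\subset T_s(l,i)\subset T_{s+1}(l,i)$ coming from Theorem \ref{4.18}(2,3) with the freedom in the AR-lift $(h_1,h_2)$ (two lifts differ by a morphism factoring through $T_s(l,i-n)$), aiming to normalize $h_2$ so that $\ker h_2$ either intersects $\mathrm{Im}(f_s)$ nontrivially, contradicting the injectivity of $\iota_s$ on $T_s(l,i)$, or produces an explicit $(j+1,j)$-type subquotient in $M$ ruled out by Lemma \ref{4.8}.
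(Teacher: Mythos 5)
Your overall architecture --- induct on $t$ by repeatedly factoring the embedding $\iota_s\colon T_s(l,i)\hookrightarrow M$ through the AR-sequence starting at $T_s(l,i)$ --- is sound, and the base case and the step $s=1$ are essentially correct (modulo the small point that to see $T_2(l,i)/{\rm Ker}\,h_2$ really is of $(2,1)$-type you must check that its socle equals its radical as a submodule of $M$; this holds because its socle lies in ${\rm soc}M$, a sum of copies of $M(l,\cdot)$, while its top is a sum of copies of $M(n-l,\cdot)$, and these are non-isomorphic by Lemma 3.2(3)). The genuine gap is exactly where you flag it: for $s\geq 2$ you never prove ${\rm Ker}\,h_2=0$; you only describe an intention to ``normalize'' the lift. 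As written, the crucial inductive step is an unproven subclaim, so the proposal is not yet a proof. (For comparison, the paper gives no details either --- it defers to the analogue, Theorem 4.16 of \cite{Ch3} --- so your route is at least in the same spirit as the intended one.)

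The gap does close, and without any normalization of the lift. Suppose $K:={\rm Ker}\,h_2\neq 0$. Applying Lemma 4.8 inside $T_{s+1}(l,i)$ gives $l(K/{\rm soc}K)\leq l({\rm soc}K)$, while applying it inside $M$ to the submodule $h_2(T_{s+1}(l,i))\cong T_{s+1}(l,i)/K$ gives the reverse inequality (after the same ${\rm soc}={\rm rad}$ check as above); hence $K$ is of $(k,k)$-type, and by Theorem 4.18(2,3) it equals ${\rm Im}(f_s\cdots f_k)$ for some $k\leq s$. (The case $K=T_{s+1}(l,i)$ is excluded: then $\iota_s=h_1g_{s-1}$ would factor through the strictly shorter module $T_{s-1}(l,i-n)$, contradicting injectivity.) In particular $K\subseteq{\rm Im}f_s$, so $K':=f_s^{-1}(K)$ is the nonzero $(k,k)$-type submodule of $T_s(l,i)$ and $\iota_s|_{K'}=h_1g_{s-1}|_{K'}$. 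But by the nesting in Theorem 4.18(3), $K'$ contains the unique $(1,1)$-type submodule ${\rm Ker}\,g_{s-1}$ of $T_s(l,i)$ (for $s=1$ one has $g_0=0$), so $\iota_s$ vanishes on a nonzero submodule --- contradicting its injectivity. Thus ${\rm Ker}\,h_2=0$ uniformly in $s$, which completes your induction.
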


\begin{proof}
If $t=1$, it follows from Proposition \ref{4.12}. Now let $t>1$ and assume that $M$ contains a submodule $N$ of $(1,1)$-type. Then by Proposition \ref{4.12}, $N\cong T_1(l,i)$ or $N\cong\ol{T}_1(l,i)$ for some $1\<l\<n-1$ and $i\in \mathbb{Z}$. If $N\cong T_1(l,i)$, then an argument  similar to the proof of \cite[Theorem 4.16]{Ch3} shows that $M\cong T_t(l,i)$. Similarly, if $N\cong\ol{T}_1(l,i)$ then $M\cong\ol{T}_t(l,i)$.
\end{proof}

For any $H_{mn}(\xi)$-module $M$ and $i\in\mathbb Z$,
let $M_{(i)}:=\{x\in M|cx=\xi^ix\}$.

\begin{lemma}\label{4.22}
Let $M$ be an $H_{mn}(\xi)$-module.
\begin{enumerate}
\item[(1)] For any $i, j\in\mathbb Z$, $M_{(i)}=M_{(j)}\Leftrightarrow i\equiv j$ $({\rm mod}\ mn)$, and hence $M=\oplus_{i\in\mathbb{Z}_{mn}}M_{(i)}$ as vector spaces.
\item[(2)] $dM_{(i)}\subseteq M_{(i-1)}$ for any $i\in\mathbb{Z}$, and hence $M^d=\oplus_{i\in\mathbb{Z}_{mn}}M^d\cap M_{(i)}$.
\item[(3)] If $K$ and $N$ are submodules of $M$ such that $M=K\oplus N$, then
$M_{(i)}=K_{(i)}\oplus N_{(i)}$ for all $i\in\mathbb Z$.
\item[(4)] If $f: M\ra N$ is an $H_{mn}(\xi)$-module map, then $f(M_{(i)})\subseteq N_{(i)}$ for all $i\in\mathbb Z$. Furthermore, if $f$ is surjective, then $f(M_{(i)})=N_{(i)}$ for all $i\in\mathbb Z$.
\end{enumerate}
\end{lemma}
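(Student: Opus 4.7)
The plan is to use the defining relations $c^{mn}=1$ and $dc=\xi cd$ in $H_{mn}(\xi)$ together with $\Bbbk$-linear algebra, since the $M_{(i)}$ are simply the eigenspaces of the operator ``multiplication by $c$'' on $M$.

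For part (1), the starting observation is that $c^{mn}=1$ in $H_{mn}(\xi)$, so the minimal polynomial of the $\Bbbk$-linear operator $c\cdot:M\to M$ divides $X^{mn}-1$. Because $\xi$ is a primitive $mn$-th root of unity and $\mathrm{char}(\Bbbk)\nmid mn$, one has $X^{mn}-1=\prod_{i=0}^{mn-1}(X-\xi^i)$ with distinct roots, so $c$ acts diagonalizably with eigenvalues in $\{\xi^i:i\in\mathbb{Z}_{mn}\}$. This gives the direct sum decomposition $M=\bigoplus_{i\in\mathbb{Z}_{mn}}M_{(i)}$; the equivalence $M_{(i)}=M_{(j)}\Leftrightarrow i\equiv j\pmod{mn}$ is immediate from the fact that $\xi^i=\xi^j$ iff $mn\mid i-j$ (combined with the convention that the spaces are defined by the eigenvalue $\xi^i$).

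For part (2), I rewrite the relation $dc=\xi cd$ as $cd=\xi^{-1}dc$. For $x\in M_{(i)}$, $c(dx)=(cd)x=\xi^{-1}d(cx)=\xi^{-1}\xi^{i}dx=\xi^{i-1}dx$, so $dx\in M_{(i-1)}$. To obtain the decomposition of $M^d$, take $x\in M^d$ and write $x=\sum_{i}x_i$ with $x_i\in M_{(i)}$ as in (1). Then $0=dx=\sum_i dx_i$ with $dx_i\in M_{(i-1)}$, and since the $M_{(i-1)}$ lie in distinct summands of the decomposition from (1), each $dx_i=0$. Hence each $x_i\in M^d\cap M_{(i)}$, giving $M^d=\bigoplus_i(M^d\cap M_{(i)})$.

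For parts (3) and (4), the key point is $c$-equivariance. In (3), since $K$ and $N$ are submodules, $c$ preserves both; for $x\in M_{(i)}$ written uniquely as $x=k+n$ with $k\in K,\ n\in N$, the identity $\xi^{i}k+\xi^{i}n=\xi^{i}x=cx=ck+cn$ and the directness of $K\oplus N$ force $ck=\xi^{i}k$ and $cn=\xi^{i}n$, giving $M_{(i)}\subseteq K_{(i)}\oplus N_{(i)}$; the reverse inclusion is trivial. In (4), $c$-linearity of $f$ yields $cf(x)=f(cx)=\xi^{i}f(x)$ for $x\in M_{(i)}$, so $f(M_{(i)})\subseteq N_{(i)}$. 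If $f$ is surjective and $y\in N_{(i)}$, choose $x'\in M$ with $f(x')=y$, decompose $x'=\sum_j x'_j$ with $x'_j\in M_{(j)}$, and apply $f$ to get $y=\sum_j f(x'_j)$ with $f(x'_j)\in N_{(j)}$; by uniqueness of the decomposition in (1) applied to $N$, $y=f(x'_i)$, proving surjectivity onto $N_{(i)}$. There is no serious obstacle here—the whole statement is the standard eigenspace decomposition for the finite-order central-like element $c$, and the mild subtlety is only to keep track of the shift by $\xi^{-1}$ coming from $dc=\xi cd$ in part (2).
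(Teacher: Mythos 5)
Your proposal is correct and supplies the details that the paper omits (the paper's proof is simply ``It follows from a straightforward verification''). The approach you take—diagonalizability of $c$ via $c^{mn}=1$ and ${\rm char}(\Bbbk)\nmid mn$, the eigenvalue shift from $dc=\xi cd$, and $c$-equivariance for submodules and module maps—is the natural one and is surely what the authors have in mind.
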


\begin{proof}
It follows from a straightforward verification.
\end{proof}

Let $1\<l\<n-1$, $i\in\mathbb Z$ and $P=\oplus_{k=0}^{m-1}P(l,i+kn)$. For any $0\<k\<m-1$, let
$\{v_1^k, v_2^k, \cdots, v_n^k, u_1^k, u_2^k, \cdots, u_n^k\}$ be a standard basis of $P(l,i+kn)$. Then
$\{v_j^k, u_j^k|1\<j\<n, 0\<k\<m-1\}$ is a basis of $P$. Let $\eta\in{\Bbbk}^{\times}$. For any $1\<j\<n$ and $0\<k\<m-1$, define $x_j^k\in P$
by $x_j^k=u_j^k$ if $1\<j\<l$; $x_j^k=u_j^k+v_{j-l}^{k+1}$ if $l+1\<j\<n$ and $0\<k<m-1$;  $x_j^{m-1}=u_j^{m-1}+\eta v_{j-l}^0$ if $l+1\<j\<n$ and $k=m-1$.
Note that $q^{j+kn}=q^j$ for any $j, k\in\mathbb Z$. By Lemma \ref{3.6}(4), a straightforward verification shows  that
\begin{eqnarray}
\eqlabel{e2} &ax_j^k=\left\{\begin{array}{ll}
\a_j(l,i)x_{j+1}^k,& 1\<j<n, 0\<k\<m-1,\\
\a_n(l,i)x_1^k+x_1^{k+1},& j=n, 0\<k<m-1,\\
\a_n(l,i)x_1^{m-1}+\eta x_1^0,& j=n, k=m-1,\\
\end{array}\right.\\
\eqlabel{e3} &bx_j^k=q^{i+l-j}x_j^k,\ 1\<j\<n, 0\<k\<m-1,\\
\eqlabel{e4} &cx_j^k=\xi^{i+kn+j-1}x_j^k,\  1\<j\<n, 0\<k\<m-1,\\
\eqlabel{e5} &dx_j^k=\left\{\begin{array}{ll}
0,& j=1, 0\<k\<m-1,\\
x_{j-1}^k,& 1<j\<n, 0\<k\<m-1.\\
\end{array}\right.
\end{eqnarray}
This implies that ${\rm span}\{x_j^k|1\<j\<n, 0\<k\<m-1\}$ is a submodule of $P$, denoted by $M_1(l,i,\eta)$.
Clearly, $\{x_j^k|1\<j\<n, 0\<k\<m-1\}$ is a $\Bbbk$-basis of $M_1(l,i,\eta)$. A basis of $M_1(l,i,\eta)$ satisfying Eqs.\equref{e2}-\equref{e5} is called a standard basis.

\begin{lemma}\label{4.23}
Retain the above notations.
\begin{enumerate}
\item[(1)] ${\rm rl}(M_1(l,i,\eta))=2$, ${\rm soc}M_1(l,i,\eta)\cong\oplus_{k=0}^{m-1}M(l,i+kn)$ and\\
  $M_1(l,i,\eta)/{\rm soc}M_1(l,i,\eta)\cong\oplus_{k=0}^{m-1}M(n-l,i+l+kn)$.
\item[(2)] $M_1(l,i,\eta)$ is an indecomposable module of $(m,m)$-type.
\item[(3)] $M_1(l,i,\eta)$ does not contain any submodule of $(1,1)$-type.
\item[(4)] $M_1(l,i,\eta)\ncong T_m(l',i')$ and $M_1(l,i,\eta)\ncong\ol{T}_m(l',i')$ for any $1\<l'\<n-1$ and $i'\in\mathbb Z$
\end{enumerate}
\end{lemma}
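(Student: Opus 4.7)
The plan is to prove the four parts roughly in the order (1), (3), (2), (4): part (1) sets up the socle and quotient descriptions used throughout; part (3) is a self-contained weight-space argument; part (2) then follows by combining (1) with an indecomposability argument in the same spirit as (3); and (4) is immediate from (3).

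For (1), I would embed $M_1(l,i,\eta)$ in $P:=\oplus_{k=0}^{m-1}P(l,i+kn)$ and compute ${\rm soc}M_1(l,i,\eta)=M_1(l,i,\eta)\cap{\rm soc}P$. Since $x_j^k=u_j^k\in{\rm soc}P(l,i+kn)$ for $1\<j\<l$ while $x_j^k=u_j^k+v_{j-l}^{k+1}$ (or with an $\eta$-twist at $k=m-1$) carries a non-trivial $v$-component for $j\>l+1$, the intersection is exactly ${\rm span}\{x_j^k:1\<j\<l,\,0\<k\<m-1\}$. Equations \equref{e2}--\equref{e5} then show that each slice $\{x_1^k,\dots,x_l^k\}$ is a submodule isomorphic to $M(l,i+kn)$, and the cosets $\ol{x}_{l+1}^k,\dots,\ol{x}_n^k$ in the quotient form a copy of $M(n-l,i+l+kn)$ after applying Lemma~\ref{3.6}(4,6,7) to match the $a$-scalars. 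In particular ${\rm rl}(M_1(l,i,\eta))=2$ and $M_1(l,i,\eta)$ is of $(m,m)$-type.

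For (3), the key observation is that each $c$-eigenspace of $M_1(l,i,\eta)$ is $1$-dimensional and spanned by a single $x_j^k$, since the residues of $i+kn+j-1$ modulo $mn$ are pairwise distinct for $1\<j\<n$, $0\<k\<m-1$. If $N\subseteq M_1(l,i,\eta)$ were of $(1,1)$-type, then by (1) and Lemma~\ref{3.2}(2), ${\rm soc}N={\rm span}\{x_1^{k_0},\dots,x_l^{k_0}\}\cong M(l,i+k_0n)$ for a unique $k_0$, and by Corollary~\ref{4.4}(1), $N/{\rm soc}N\cong M(n-l,i+l+k_1n)$ for some $k_1$. Lifting the bottom weight vector of the quotient and invoking $1$-dimensionality of the weight spaces, the lift must be a non-zero multiple of $x_{l+1}^{k_1}$. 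Applying $a^{n-l}$ via \equref{e2} then yields a non-zero combination of $x_1^{k_1}$ and $x_1^{(k_1+1)\bmod m}$, with coefficients $\alpha_n(l,i)\ne 0$ and either $1$ or $\eta\ne 0$. Because $m>1$, these two vectors have distinct $c$-weights modulo $mn$ and therefore cannot jointly lie in ${\rm soc}N$, contradicting $am_{n-l}=0$ in $M(n-l,i+l+k_1n)$.

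For (2), suppose $M_1(l,i,\eta)=K\oplus L$. By Lemma~\ref{4.22}(3) the $c$-eigenspaces split, so each $x_j^k$ lies entirely in $K$ or in $L$. A short case analysis using $d$-closure together with $a$-closure and the fact that $\alpha_j(l,i)\ne 0$ for $j\ne l$ shows that each ``row'' $\{x_1^k,\dots,x_n^k\}$ lies entirely in a single summand; the only ostensibly mixed splitting at $j=l$ is ruled out by $dx_{l+1}^k=x_l^k$. The cross-row formula $ax_n^k=\alpha_n(l,i)x_1^k+x_1^{k+1}$ (with the $\eta$-twist closing the cycle at $k=m-1$) then forces consecutive rows to coincide, so by cyclicity all rows lie in $K$ and $L=0$. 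Part (4) is immediate from (3): by Theorems~\ref{4.18}(2) and \ref{4.19}(2) applied with $j=1$, every $T_m(l',i')$ and $\ol{T}_m(l',i')$ contains a $(1,1)$-type submodule, whereas $M_1(l,i,\eta)$ does not. The main technical hurdle is part (3), specifically verifying that the two wrap-around components $x_1^{k_1}$ and $x_1^{(k_1+1)\bmod m}$ genuinely sit in distinct $c$-weight spaces; this uses essentially that $m>1$ and that $\xi$ is primitive of order $mn$ rather than merely $n$, and the same mechanism supplies the connectivity step in (2).
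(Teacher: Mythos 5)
Your proof is correct and follows essentially the same strategy as the paper's: compute ${\rm soc}$ and quotient by embedding in $\oplus_k P(l,i+kn)$, exploit the $1$-dimensionality of the $c$-weight spaces $M_1(l,i,\eta)_{(i+kn+j-1)}$ to force each $x_j^k$ into a single summand, and use the wrap-around relation $ax_n^k=\a_n(l,i)x_1^k+x_1^{k+1}$ (with the $\eta$-twist at $k=m-1$) to connect the rows and to produce the contradiction in (3). The only cosmetic difference is that in (3) the paper locates $x_n^k$ directly via the top weight of $\pi(N)$, whereas you lift the bottom weight vector to $x_{l+1}^{k_1}$ and then apply $a^{n-l}$; both land on the same two-term combination whose two $c$-weights differ because $m>1$.
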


\begin{proof}
(1) By the proof of Proposition \ref{3.8}, we have $M_1(l,i,\eta)\subseteq{\rm rad}P$ and
$$\begin{array}{rl}
&{\rm soc}M_1(l,i,\eta)={\rm rad}M_1(l,i,\eta)={\rm soc}P\\
=&{\rm span}\{x_j^k|1\<j\<l, 0\<k\<m-1\}\cong\oplus_{k=0}^{m-1}M(l,i+kn).
\end{array}$$
Hence $M_1(l,i,\eta)/{\rm soc}M_1(l,i,\eta)\subseteq P/{\rm soc}P$. Let $\pi: P\ra P/{\rm soc}P$ be the canonical epimorphiosm.
By Lemma \ref{3.6}(4,6), a straightforward verification shows that ${\rm span}\{\pi(x_{l+1}^k), \cdots, \pi(x_n^k)\}$ is a submodule of $M_1(l,i,\eta)/{\rm soc}M_1(l,i,\eta)$ and isomorphic to $M(n-l,i+l+kn)$ for any $0\<k\<m-1$. Therefore, $$\begin{array}{rl}
M_1(l,i,\eta)/{\rm soc}M_1(l,i,\eta)&={\rm span}\{\pi(x_j^k)|l+1\<j\<n, 0\<k\<m-1\}\\
&\cong\oplus_{k=0}^{m-1}M(n-l,i+l+kn),\\
\end{array}$$
and so ${\rm rl}(M_1(l,i,\eta))=2$.

(2) Suppose $M_1(l,i,\eta)=N\oplus K$ for some submodules $N$ and $K$ of $M_1(l,i,\eta)$. Then by Lemma \ref{4.22}(3), $M_1(l,i,\eta)_{(j)}=N_{(j)}\oplus K_{(j)}$ for any $j\in\mathbb Z$.
It is easy to check that for $1\<j, j'\<n$ and $0\<k,k'\<m-1$,
$$i+kn+j-1\equiv i+k'n+j'-1\ ({\rm mod}\ mn) \Leftrightarrow j=j'\ \text{and } k=k'.$$
Then by Eq.\equref{e4}, $x_j^k\in M_1(l,i,\eta)_{(i+kn+j-1)}$
and ${\rm dim}(M_1(l,i, \eta)_{(i+kn+j-1)})=1$ for all $1\<j\<n$ and $0\<k\<m-1$, and so
$x_j^k\in N$ or $x_j^k\in K$. In particular,
$x_n^0\in N$ or $x_n^0\in K$. Without losing generality, we may assume $x_n^0\in N$. Then
$ax_n^0=\a_n(l,i)x_1^0+x_1^1\in N$, and $cax_n^0=\a_n(l,i)cx_1^0+cx_1^1=\a_n(l,i)\xi^ix_1^0+\xi^{i+n}x_1^1\in N$.
This implies that $x_1^0, x_1^1\in N$. If $x_n^1\in K$, then a similar argument shows that $x_1^1, x_1^2\in K$, and so
$x_1^1\in N\cap K$, a contradiction. Therefore, $x_n^1\in N$. Similarly, one can check that $x_n^2, \cdots, x_n^{m-1}\in N$. However, $M_1(l,i,\eta)=\langle x_n^0, x_n^1, \cdots, x_n^{m-1}\rangle$. Hence $N=M_1(l,i,\eta)$, and so $M_1(l,i,\eta)$ is indecomposable. Then by (1), $M_1(l,i,\eta)$ is of $(m,m)$-type.

(3) Suppose on the contrary that $M_1(l,i,\eta)$ contains a submodule $N$ of $(1,1)$-type. Then by (1), $\pi(N)$ is a simple submodule of $M_1(l,i,\eta)/{\rm soc}M_1(l,i,\eta)$, and hence $\pi(N)\cong  M(n-l,i+l+kn)$ for some $0\<k\<m-1$.
This implies that $\pi(N)=\oplus_{j=l}^{n-1}\pi(N)_{(i+kn+j)}$ and ${\rm dim}(\pi(N)_{(i+kn+j)})=1$ for all $l\<j\<n-1$.
By Lemma \ref{4.22}(4), $\pi(N)_{(i+kn+j)}=\pi(N_{(i+kn+j)})$, and hence $N_{(i+kn+n-1)}\neq 0$. By the proof of (2), $N_{(i+kn+n-1)}=M_1(l,i,\eta)_{(i+kn+n-1)}$ and so $x_n^k\in N$. Then similarly to (2), one gets $x_1^k, x_1^{k+1}\in N$, and hence $\langle x_1^k, x_1^{k+1}\rangle\subseteq N$, where $x_1^m=x_1^0$.
However, $\langle x_1^k, x_1^{k+1}\rangle\cong M(l,i+kn)\oplus M(l,i+(k+1)n)$ and $l({\rm soc}N)=1$, a contradiction. Therefore, $M_1(l,i,\eta)$ does not contain any submodule of $(1,1)$-type.

(4) It follows from (3) and Theorems \ref{4.18}(2) and \ref{4.19}(2).
\end{proof}

\begin{lemma}\label{4.24}
Let $1\<l\<n-1$, $i\in\mathbb Z$ and $\eta\in\Bbbk^{\times}$.
\begin{enumerate}\eqlabel{e6}
\item[(1)]  $\O^{-1}M_1(l,i,\eta)\cong M_1(n-l, i+l-n, (-1)^m\eta)$
\item[(2)] $\O^{-2}M_1(l,i,\eta)\cong M_1(l, i-n, \eta)$ and $\O^2M_1(l,i,\eta)\cong M_1(l, i+n, \eta)$.
\end{enumerate}
\end{lemma}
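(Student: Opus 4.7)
The plan is to prove (1) by an explicit construction and deduce (2) formally. For (1), the starting observation is that $\mathrm{soc}\,M_1(l,i,\eta)$ equals $\mathrm{soc}\,P$, where $P=\oplus_{k=0}^{m-1}P(l,i+kn)$, so $P$ is an injective envelope of $M_1(l,i,\eta)$. Since $M_1(l,i,\eta)$ is indecomposable and has Loewy length $2$, it has no nonzero injective summand, hence $\Omega^{-1}M_1(l,i,\eta)\cong P/M_1(l,i,\eta)$ by definition. The first step is therefore to describe this quotient concretely: using the standard basis $\{v_j^k,u_j^k\}$ of $P$ and the defining formulas $x_j^k=u_j^k$ for $1\le j\le l$, $x_j^k=u_j^k+v_{j-l}^{k+1}$ for $l+1\le j\le n$, $0\le k<m-1$, and $x_j^{m-1}=u_j^{m-1}+\eta v_{j-l}^0$, one sees that $\{\bar v_j^k\mid 1\le j\le n,\ 0\le k\le m-1\}$ is a $\Bbbk$-basis of $P/M_1(l,i,\eta)$, and in this quotient $\bar u_j^k=0$ for $1\le j\le l$, $\bar u_j^k=-\bar v_{j-l}^{k+1}$ for $l+1\le j\le n$ and $k<m-1$, and $\bar u_j^{m-1}=-\eta\bar v_{j-l}^0$.

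The second step is to define a candidate isomorphism $\phi\colon P/M_1(l,i,\eta)\to M_1(n-l,i+l-n,(-1)^m\eta)$ by $\phi(\bar v_j^k)=(-1)^k y_j^k$, where $\{y_j^k\}$ denotes a standard basis of the target (as in the paragraph before Lemma \ref{4.23}, with parameters $l'=n-l$, $i'=i+l-n$, $\eta'=(-1)^m\eta$). The actions of $b$, $c$, $d$ match immediately: the eigenvalues of $b$ on $\bar v_j^k$ are $q^{i-j}$ (since $q^n=1$), those of $c$ are $\xi^{i+l+(k-1)n+j-1}$, and these coincide with the actions on $y_j^k$ after using $i'+l'-j=i-j$ and $i'+kn+j-1=i+l+(k-1)n+j-1$; the $d$-action is given by the same shift on both sides. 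For the $a$-action when $1\le j\le n-1$, one invokes Lemma \ref{3.6}(6) to identify $\gamma_j(l,i)=\alpha_j(n-l,i+l)$, together with Lemma \ref{3.6}(7) to free indices from $kn$-shifts, and the sign $(-1)^k$ simply transports.

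The main obstacle, and the reason for the particular scalar $(-1)^m$, is the $a$-action at $j=n$, which propagates through the cyclic gluing. One has $aw_n^k=\gamma_n(l,i)w_1^k-w_1^{k+1}$ for $k<m-1$ and $aw_n^{m-1}=\gamma_n(l,i)w_1^{m-1}-\eta w_1^0$ on the quotient side, whereas on the target side $ay_n^k=\alpha_n(n-l,i+l)y_1^k+y_1^{k+1}$ for $k<m-1$ and $ay_n^{m-1}=\alpha_n(n-l,i+l)y_1^{m-1}+(-1)^m\eta y_1^0$. Comparing $\phi(aw_n^k)$ with $a\phi(w_n^k)$, the sign $(-1)^k$ in $\phi$ converts the $-w_1^{k+1}$ to $+y_1^{k+1}$ for $k<m-1$, and at $k=m-1$ the accumulated sign $(-1)^{m-1}$ combined with $\eta'=(-1)^m\eta$ produces exactly the required relation $(-1)^{m-1}(-1)^m\eta=-\eta$. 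Thus $\phi$ is a well-defined $H_{mn}(\xi)$-isomorphism, proving (1).

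For (2), apply (1) twice:
\begin{equation*}
\Omega^{-2}M_1(l,i,\eta)\cong\Omega^{-1}M_1(n-l,i+l-n,(-1)^m\eta)\cong M_1(l,i-n,(-1)^{2m}\eta)=M_1(l,i-n,\eta).
\end{equation*}
Since $M_1(l,i,\eta)$ has no injective (equivalently projective) summand, $\Omega^2\Omega^{-2}M_1(l,i,\eta)\cong M_1(l,i,\eta)$, so applying $\Omega^2$ to the last isomorphism and then replacing $i$ by $i+n$ gives $\Omega^2 M_1(l,i,\eta)\cong M_1(l,i+n,\eta)$. This completes the proof.
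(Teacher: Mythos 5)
Your proof is correct and follows essentially the same route as the paper: realize $\Omega^{-1}M_1(l,i,\eta)$ as $P/M_1(l,i,\eta)$ for $P=\oplus_{k=0}^{m-1}P(l,i+kn)$, note the relations $\ol{u_j^k}=-\ol{v_{j-l}^{k+1}}$ (resp. $-\eta\ol{v_{j-l}^0}$), and twist the basis $\ol{v_j^k}$ by $(-1)^k$ to match the standard basis relations of $M_1(n-l,i+l-n,(-1)^m\eta)$ via Lemma \ref{3.6}(6,7), with (2) obtained by iterating (1). The sign bookkeeping at $j=n$, $k=m-1$ is exactly the paper's computation.
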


\begin{proof}
With the notations above, $P=\oplus_{k=0}^{m-1}P(l,i+kn)$ is an injective envelope of $M_1(l,i,\eta)$ and $M_1(l,i,\eta)\subseteq P$. For any $x\in P$, let $\ol{x}$ denote the image of $x$ under the canonical epimorphism $P\ra P/M_1(l,i,\eta)$. Then
\begin{eqnarray}\eqlabel{e6}
\ol{u_j^k}=\left\{\begin{matrix}
\vspace{0.1cm}
-\ol{v_{j-l}^{k+1}}, & l+1\<j\<n, 0\<k<m-1,\\
-\eta\ol{v_{j-l}^0}, & l+1\<j\<n, k=m-1.\\
\end{matrix}\right.
\end{eqnarray}
Let $y_j^k=(-1)^k\ol{v_j^k}$ for all $1\<j\<n$ and $0\<k\<m-1$. Then $\{y_j^k|1\<j\<n, 0\<k\<m-1\}$ is a basis of $P/M_1(l,i,\eta)$. By Lemma \ref{3.6}(6,7) and Eq.\equref{e6},
a straightforward verification shows that
\begin{eqnarray*}
&ay_j^k=\left\{\begin{array}{ll}
\a_j(n-l,i+l-n)y_{j+1}^k,& 1\<j<n, 0\<k\<m-1,\\
\a_n(n-l,i+l-n)y_1^k+y_1^{k+1},& j=n, 0\<k<m-1,\\
\a_n(n-l,i+l-n)y_1^{m-1}+(-1)^m\eta y_1^0,& j=n, k=m-1,\\
\end{array}\right.\\
&by_j^k=q^{i+l-n+(n-l)-j}y_j^k,\ 1\<j\<n, 0\<k\<m-1,\\
&cy_j^k=\xi^{i+l-n+kn+j-1}y_j^k,\  1\<j\<n, 0\<k\<m-1,\\
&dy_j^k=\left\{\begin{array}{ll}
0,& j=1, 0\<k\<m-1,\\
y_{j-1}^k,& 1<j\<n, 0\<k\<m-1.\\
\end{array}\right.
\end{eqnarray*}
It follows that $\O^{-1}M_1(l,i,\eta)\cong P/M_1(l,i,\eta)\cong M_1(n-l,i+l-n,(-1)^m\eta)$.
This shows (1). (2) follows from (1).
\end{proof}

\begin{lemma}\label{4.25}
Let $1\<l\<n-1$, $i\in\mathbb Z$ and $\eta\in\Bbbk^{\times}$. Then
$$M_1(l,i,\eta)\cong M_1(l,i+n,\eta).$$
\end{lemma}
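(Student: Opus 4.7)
The plan is to construct an explicit $H_{mn}(\xi)$-module isomorphism $\phi\colon M_1(l,i,\eta)\to M_1(l,i+n,\eta)$ by reindexing and rescaling the standard bases. Let $\{x_j^k\}$ and $\{y_j^k\}$ denote standard bases of the domain and target respectively, each satisfying Eqs.\equref{e2}--\equref{e5} with their own parameters.

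The key observation is that, since $q$ is a primitive $n$-th root of unity, $q^{i+n}=q^i$; in particular $\a_j(l,i+n)=\a_j(l,i)$ and $q^{(i+n)+l-j}=q^{i+l-j}$, so the formal defining relations for the actions of $a$, $b$, $d$ on $\{y_j^k\}$ are identical to those on $\{x_j^k\}$. Only the $c$-eigenvalues differ, by a uniform factor of $\xi^n$ on every basis vector. This motivates defining $\phi(x_j^k):=c_k\,y_j^{k-1}$, with the upper index read modulo $m$ and scalars $c_k\in\Bbbk^{\times}$ to be determined; shifting the upper index of $y$ by $-1$ exactly cancels the extra $\xi^n$ in the $c$-eigenvalue, and compatibility of $\phi$ with $b$, $d$, and with $a$ in the interior range $1\<j<n$ will then follow immediately from the coincidence of the scalars.

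The main obstacle is compatibility of $\phi$ with the cyclic $a$-action at $j=n$, where the ``wrap-around'' factor $\eta$ in Eq.\equref{e2} appears in $ax_n^{m-1}$ and in $ay_n^{m-1}$, i.e., at mismatched upper-index positions after the substitution $k\mapsto k-1$. To resolve this I plan to solve for the $c_k$ from the resulting system of $m$ compatibility conditions. Computing $\phi(ax_n^k)$ against $a\phi(x_n^k)$ for each $0\<k\<m-1$ should yield $c_{k+1}=c_k$ for $1\<k\<m-2$, together with $c_1=\eta c_0$ from $k=0$ and $c_{m-1}=\eta c_0$ from $k=m-1$; since $m\>2$ these are consistent, and setting $c_0=1$ and $c_1=\cdots=c_{m-1}=\eta$ produces an $H_{mn}(\xi)$-linear $\phi$ that sends a basis to a nonzero rescaling of a basis, hence an isomorphism.
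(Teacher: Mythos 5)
Your proposal is correct and is essentially the same argument as the paper's, just phrased in the opposite direction: the paper defines $f\colon M_1(l,i+n,\eta)\to M_1(l,i,\eta)$ by $f(y_j^k)=x_j^{k+1}$ for $0\le k<m-1$ and $f(y_j^{m-1})=\eta x_j^0$, which is (up to the global scalar $\eta$) the inverse of your $\phi$. Your bookkeeping of the constants $c_k$ checks out against Eqs.~\equref{e2}--\equref{e5}: the $c$-eigenvalue shift by $\xi^n$ is absorbed by the upper-index shift modulo $m$, the relations $\a_j(l,i+n)=\a_j(l,i)$ and $q^{i+n+l-j}=q^{i+l-j}$ from Lemma~\ref{3.6}(7) and $q^n=1$ handle $a$ (for $j<n$), $b$, and $d$, and the wrap-around conditions at $j=n$ force $c_1=\cdots=c_{m-1}=\eta c_0$, consistently.
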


\begin{proof}
Let $\{x_j^k|1\<j\<n, 0\<k\<m-1\}$ and $\{y_j^k|1\<j\<n, 0\<k\<m-1\}$ be the standard bases of $M_1(l,i,\eta)$ and $M_1(l,i+n,\eta)$, respectively. Define a linear map $f: M_1(l,i+n,\eta)\ra M_1(l,i,\eta)$ by
\begin{eqnarray*}
f(y_j^k)=\left\{\begin{array}{ll}
x_j^{k+1}, & 1\<j\<n, 0\<k<m-1,\\
\eta x_j^0, & 1\<j\<n, k=m-1.\\
\end{array}\right.
\end{eqnarray*}
Then $f$ is a linear isomorphism. By a standard computation, one can check that
\begin{eqnarray*}
f(ay_j^k)=af(y_j^k), f(by_j^k)=bf(y_j^k), f(cy_j^k)=cf(y_j^k), f(dy_j^k)=df(y_j^k),
\end{eqnarray*}
where $1\<j\<n$ and $0\<k\<m-1$. Hence $f$ is an $H_{mn}(\xi)$-module isomorphism.
This completes the proof.
\end{proof}

\begin{corollary}\label{4.26}
Let $1\<l\<n-1$, $i\in\mathbb Z$ and $\eta\in\Bbbk^{\times}$. Then
\begin{eqnarray*}
\O^2M_1(l,i,\eta)\cong M_1(l, i, \eta)\cong\O^{-2}M_1(l,i,\eta).
\end{eqnarray*}
\end{corollary}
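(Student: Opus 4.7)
The plan is to read Corollary~\ref{4.26} as an immediate combination of the two preceding lemmas, since both of them have already done the substantive work. I will not attempt any fresh calculation: Lemma~\ref{4.24}(2) already identifies $\Omega^{\pm 2}M_1(l,i,\eta)$ with $M_1(l,i\mp n,\eta)$, and Lemma~\ref{4.25} says that shifting the middle parameter by $n$ produces an isomorphic module.

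Concretely, first I would apply Lemma~\ref{4.24}(2) to write $\Omega^{2}M_1(l,i,\eta)\cong M_1(l,i+n,\eta)$, then invoke Lemma~\ref{4.25} to conclude $M_1(l,i+n,\eta)\cong M_1(l,i,\eta)$; composing the two isomorphisms gives $\Omega^{2}M_1(l,i,\eta)\cong M_1(l,i,\eta)$. For the other half, I would again use Lemma~\ref{4.24}(2) to get $\Omega^{-2}M_1(l,i,\eta)\cong M_1(l,i-n,\eta)$, and then apply Lemma~\ref{4.25} with the index $i$ replaced by $i-n$ to obtain $M_1(l,i-n,\eta)\cong M_1(l,i,\eta)$; composing gives $\Omega^{-2}M_1(l,i,\eta)\cong M_1(l,i,\eta)$.

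There is no real obstacle here: the corollary is a bookkeeping statement that packages Lemmas~\ref{4.24} and \ref{4.25} into the form that will later be needed (presumably to describe the $\Omega$-orbit of $M_1(l,i,\eta)$ as having period dividing $2$, analogously to $\Omega^2 T_t(l,i)\cong T_t(l,i+n)$ in Theorem~\ref{4.18}). If anything warrants a word of care, it is only that one must apply Lemma~\ref{4.25} with the correct index shift in the second chain (at $i-n$ rather than $i$), but since the lemma is stated for arbitrary $i\in\mathbb{Z}$ this is automatic. Hence the proof can be written in one line citing Lemmas~\ref{4.24}(2) and \ref{4.25}.
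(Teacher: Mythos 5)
Your proposal is correct and is exactly the paper's argument: the paper's proof of Corollary~\ref{4.26} likewise just cites Lemmas~\ref{4.24}(2) and \ref{4.25}, with the same composition of isomorphisms (including the shift of the index to $i-n$ for the $\Omega^{-2}$ half). Nothing further is needed.
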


\begin{proof}
It follows from Lemmas \ref{4.24}(2) and \ref{4.25}.
\end{proof}

\begin{proposition}\label{4.27}
Let $1\<l, l'\<n-1$, $i, i'\in\mathbb Z$ and $\eta, \eta'\in\Bbbk^{\times}$. Then
$$M_1(l,i,\eta)\cong M_1(l', i', \eta') \Leftrightarrow l=l', \eta=\eta' \text{ and }i\equiv i'\ ({\rm mod}\ n).$$
\end{proposition}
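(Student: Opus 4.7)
I would handle the two implications separately.

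For the $(\Leftarrow)$ direction, assume $l=l'$, $\eta=\eta'$, and $i\equiv i'\pmod{n}$, and write $i'=i+kn$ for some $k\in\mathbb Z$. Iterating Lemma \ref{4.25} (in either direction) $|k|$ times yields $M_1(l,i,\eta)\cong M_1(l,i',\eta)=M_1(l',i',\eta')$.

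For the $(\Rightarrow)$ direction, let $f\colon M_1(l,i,\eta)\to M_1(l',i',\eta')$ be an $H_{mn}(\xi)$-module isomorphism. By Lemma \ref{4.23}(1),
$$
{\rm soc}\,M_1(l,i,\eta)\cong\bigoplus_{k=0}^{m-1}M(l,i+kn),\qquad
{\rm soc}\,M_1(l',i',\eta')\cong\bigoplus_{k=0}^{m-1}M(l',i'+kn),
$$
and $f$ restricts to an isomorphism of socles. By Krull--Schmidt together with Lemma \ref{3.2}(2), this forces $l=l'$ and the equality of subsets $\{i+kn\mid 0\<k\<m-1\}=\{i'+kn\mid 0\<k\<m-1\}$ of $\mathbb Z_{mn}$; the latter is equivalent to $i\equiv i'\pmod{n}$.

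It remains to prove $\eta=\eta'$. Using the $(\Leftarrow)$ direction I obtain an isomorphism $M_1(l',i',\eta')\cong M_1(l,i,\eta')$, which composed with $f$ yields an isomorphism $g\colon M_1(l,i,\eta)\to M_1(l,i,\eta')$. Let $\{x_j^k\}$ and $\{y_j^k\}$ denote the standard bases of source and target. The key observation, already used in the proof of Lemma \ref{4.23}(2), is that the $mn$ integers $i+kn+j-1$ (for $1\<j\<n$, $0\<k\<m-1$) are pairwise distinct modulo $mn$; hence by Eq.\equref{e4} every $c$-weight space of $M_1(l,i,\eta)$ is one-dimensional, spanned by the corresponding $x_j^k$, and similarly in the target. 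Lemma \ref{4.22}(4) then forces $g(x_j^k)=c_{j,k}\,y_j^k$ for some scalars $c_{j,k}\in\Bbbk$. Compatibility with the $d$-action via Eq.\equref{e5} makes $c_{j,k}$ independent of $j$, so write $c_k:=c_{n,k}$. Compatibility with the $a$-action on $x_n^k$ for $0\<k<m-1$ via Eq.\equref{e2} gives $c_{k+1}=c_k$, so all $c_k$ agree at a common value $c$. The last relation $g(ax_n^{m-1})=ag(x_n^{m-1})$, after comparing the $y_1^0$-coefficients, reads $c\eta=c\eta'$; since $g$ is an isomorphism $c\neq 0$, hence $\eta=\eta'$.

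The argument is not conceptually deep; the real obstacle is choosing the right invariant to isolate $\eta$. The decisive point is that $c$ separates the standard basis by eigenvalue, so every potential isomorphism is diagonal in that basis, and the twist $\eta$ then appears in a single equation, namely the one coming from $ax_n^{m-1}$.
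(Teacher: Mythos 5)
Your proposal is correct and follows essentially the same route as the paper: use Lemma \ref{4.25} for the easy direction, read off $l=l'$ and $i\equiv i'\ ({\rm mod}\ n)$ from the socle decomposition via Lemma \ref{3.2}(2), reduce to $g\colon M_1(l,i,\eta)\to M_1(l,i,\eta')$, exploit the one-dimensionality of the $c$-weight spaces to force $g$ to be diagonal in the standard bases, propagate the scalars via the $d$- and $a$-actions to a single constant, and extract $\eta=\eta'$ from $g(ax_n^{m-1})=ag(x_n^{m-1})$. The only cosmetic difference is that the paper obtains $f(x_j^k)=\beta_k y_j^k$ at once by applying $d^{n-j}$ to $f(x_n^k)$, whereas you first allow $c_{j,k}$ to depend on $j$ and then eliminate that dependence via $d$; the substance is identical.
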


\begin{proof}
If $l=l'$, $\eta=\eta'$ and $i\equiv i'\ ({\rm mod}\ n)$, then $M_1(l,i,\eta)\cong M_1(l', i', \eta')$ by Lemma \ref{4.25}.
Conversely, assume $M_1(l,i,\eta)\cong M_1(l', i', \eta')$. Then ${\rm soc}M_1(l,i,\eta)\cong{\rm soc}M_1(l', i', \eta')$.
By Lemma \ref{4.23}(1), $M(l,i)\cong M(l',i'+kn)$ for some $0\<k\<m-1$. It follows from Lemma \ref{3.2}(2) that
$l=l'$ and $mn|i-i'-kn$. This implies $n|i-i'$. Then by Lemma \ref{4.25}, $M_1(l',i',\eta')\cong M_1(l, i, \eta')$, and so
$M_1(l,i,\eta)\cong M_1(l, i, \eta')$. Let $f: M_1(l,i,\eta)\ra M_1(l, i, \eta')$ be a module isomorphism.
Let $\{x_j^k|1\<j\<n, 0\<k\<m-1\}$ and $\{y_j^k|1\<j\<n, 0\<k\<m-1\}$ be the standard bases of $M_1(l,i,\eta)$ and $M_1(l, i, \eta')$, respectively. Then by the proof of Lemma \ref{4.23}(2), $M_1(l,i,\eta)_{(i+kn+j-1)}=\Bbbk x_j^k$ and $M_1(l,i,\eta')_{(i+kn+j-1)}=\Bbbk y_j^k$ for all $1\<j\<n$ and $0\<k\<m-1$. Hence by Lemma \ref{4.22}(4),
there are scalars $\b_0, \b_1, \cdots, \b_{m-1}\in\Bbbk^{\times}$ such that $f(x_n^k)=\b_ky_n^k$ for all $0\<k\<m-1$. For any $1\<j<n$ and $0\<k\<m-1$, $f(d^{n-j}x_n^k)=d^{n-j}f(x_n^k)$, i.e. $f(x_j^k)=\b_ky_j^k$. Then for $0\<k<m-1$, by  $f(ax_n^k)=af(x_n^k)$, one gets that
$\a_n(l,i)\b_ky_1^k+\b_{k+1}y_1^{k+1}=\a_n(l,i)\b_ky_1^k+\b_ky_1^{k+1}$, which implies $\b_k=\b_{k+1}$.
Thus, $\b_0=\b_1=\cdots=\b_{m-1}$. Finally, from  $f(ax_n^{m-1})=af(x_n^{m-1})$, one gets $\eta=\eta'$.
\end{proof}

\begin{corollary}\label{4.28}
Let $1\<l<n$, $i\in\mathbb Z$ and $\eta\in\Bbbk^{\times}$. Then
$${\rm End}_{H_{mn}(\xi)}(M_1(l,i,\eta))\cong\Bbbk.$$
\end{corollary}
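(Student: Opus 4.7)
The plan is to specialise the last paragraph of the proof of Proposition \ref{4.27} from the isomorphism setting to the endomorphism setting. Let $f\in\mathrm{End}_{H_{mn}(\xi)}(M_1(l,i,\eta))$ and fix a standard basis $\{x_j^k\mid 1\<j\<n,\ 0\<k\<m-1\}$ of $M_1(l,i,\eta)$.

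First, I would exploit the one-dimensional weight-space decomposition under the $c$-action. By the verification already carried out inside the proof of Lemma \ref{4.23}(2), the integers $i+kn+j-1$, as $(j,k)$ ranges over $\{1,\ldots,n\}\times\{0,\ldots,m-1\}$, are pairwise distinct modulo $mn$, so each $c$-eigenspace $M_1(l,i,\eta)_{(i+kn+j-1)}$ is precisely $\Bbbk x_j^k$. Lemma \ref{4.22}(4) applied to $f$ therefore yields scalars $\beta_{j,k}\in\Bbbk$ with $f(x_j^k)=\beta_{j,k}x_j^k$.

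Next, I would use the $d$-action and then the $a$-action to collapse the indices. From $f(dx_j^k)=df(x_j^k)$ together with \equref{e5}, one obtains $\beta_{j-1,k}=\beta_{j,k}$ for $2\<j\<n$, so $\beta_{j,k}=\beta_k$ depends only on $k$. For $0\<k<m-1$, comparing $f(ax_n^k)$ with $af(x_n^k)$ via \equref{e2} gives
\begin{equation*}
\alpha_n(l,i)\beta_k x_1^k+\beta_{k+1}x_1^{k+1}=\alpha_n(l,i)\beta_k x_1^k+\beta_k x_1^{k+1},
\end{equation*}
hence $\beta_{k+1}=\beta_k$. Therefore all $\beta_k$ coincide with a single scalar $\beta\in\Bbbk$; the residual relation $f(ax_n^{m-1})=af(x_n^{m-1})$ is then automatically satisfied, since the $\eta$-contributions on the two sides agree as soon as $\beta_0=\beta_{m-1}$. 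This yields $f=\beta\cdot\mathrm{id}$, and hence $\mathrm{End}_{H_{mn}(\xi)}(M_1(l,i,\eta))\cong\Bbbk$.

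I do not expect any genuine obstacle: every ingredient (the one-dimensional $c$-weight spaces, the compatibility of $f$ with the $c$-, $d$- and $a$-actions via Lemma \ref{4.22}, and the explicit action formulas \equref{e2}--\equref{e5}) has already been established earlier in the paper. The only mild adaptation compared with Proposition \ref{4.27} is that the scalars $\beta_k$ are now only required to lie in $\Bbbk$ rather than $\Bbbk^{\times}$, which is harmless since no step in the argument divides by a $\beta_k$.
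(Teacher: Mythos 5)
Your proof is correct and follows the same approach the paper intends: it is exactly the argument in the final paragraph of the proof of Proposition~\ref{4.27}, specialised from an isomorphism $M_1(l,i,\eta)\to M_1(l,i,\eta')$ to an endomorphism of $M_1(l,i,\eta)$, which is precisely what the paper's terse proof (``It follows from the proof of Proposition~\ref{4.27}'') refers to. The one-dimensionality of the $c$-weight spaces, the $d$-action collapsing the $j$-index, and the $a$-action on $x_n^k$ forcing all $\beta_k$ to coincide are the same ingredients used there.
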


\begin{proof}
It follows from the proof of Proposition \ref{4.27}.
\end{proof}

\begin{lemma}\label{4.29}
Let $M$ be an indecomposable $H_{mn}(\xi)$-module of $(t,t)$-type. Then either $M$ contains a submodule of $(1,1)$-type, or $M$ contains a submodule isomorphic to $M_1(l,i,\eta)$ for some $1\<l<n$, $i\in\mathbb Z$ and $\eta\in\Bbbk^{\times}$.
\end{lemma}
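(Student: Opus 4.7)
The plan is to construct an explicit embedding $M_1(l,i,\eta)\hookrightarrow M$ by selecting appropriate $c$-weight vectors in $M$ whose images under module actions realize a standard basis of some $M_1(l,i,\eta)$.

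First I would apply Proposition \ref{4.1} and Corollary \ref{4.4}(1) to decompose ${\rm soc}M\cong\oplus_{k=0}^{m-1}t_kM(l,i+kn)$ and $M/{\rm soc}M\cong\oplus_{k=0}^{m-1}s_kM(n-l,i+l+kn)$, and use Lemma \ref{4.22} to write $M=\oplus_jM_{(j)}$. A short $c$-weight calculation, matching the weights of $u_n^k$ and $v_{n-l}^{k+1}$ as in the construction of $M_1(l,i,\eta)$, shows that $M_{(i+kn+n-1)}\cap{\rm soc}M=0$, so every nonzero $y\in M_{(i+kn+n-1)}$ is a top-weight lift of a copy of $M(n-l,i+l+kn)$ in $M/{\rm soc}M$.

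Next I would analyze $\langle y\rangle$ for such a $y$. Since $\ol{y}$ generates a simple in $M/{\rm soc}M$ and ${\rm rl}(M)=2$, $\langle y\rangle$ is a quotient of $P(n-l,i+l+kn)/{\rm soc}P(n-l,i+l+kn)$; by Proposition \ref{3.8} this quotient is of $(1,2)$-type with socle $M(l,i+kn)\oplus M(l,i+(k+1)n)$. The no-$(1,1)$-submodule hypothesis rules out the two proper $(1,1)$-type quotients, so $\langle y\rangle$ is itself of $(1,2)$-type, yielding $ay=\a_n(l,i)z+w$, where $z:=d^{n-1}y$ is a nonzero $m_1$-element in the $M(l,i+kn)$-isotypic part of ${\rm soc}M$ and $w$ is a nonzero $m_1$-element in the $M(l,i+(k+1)n)$-isotypic part. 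Iterating: starting from a nonzero $y_0\in M_{(i+n-1)}$, I would choose $y_{k+1}\in M_{(i+(k+1)n+n-1)}$ with $d^{n-1}y_{k+1}$ equal to the $w$-component of $ay_k$ (adjusting $y_k$ by socle elements if needed, which does not change $\ol{y}_k$). Indecomposability of $M$ should force $s_k\>1$ and $t_k\>1$ for every $k$, close the chain after exactly $m$ steps, and produce a nonzero $\eta\in\Bbbk$ with $ay_{m-1}=\a_n(l,i)d^{n-1}y_{m-1}+\eta d^{n-1}y_0$. Setting $x_j^k:=d^{n-j}y_k$, the elements $\{x_j^k\}$ satisfy Eqs. \equref{e2}-\equref{e5}, giving $\langle y_0,\ldots,y_{m-1}\rangle\cong M_1(l,i,\eta)$.

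The main obstacle is the iterative matching and the closure of the chain. The matching step asks that $d^{n-1}\colon M_{(i+(k+1)n+n-1)}\ra{\rm soc}M\cap M_{(i+(k+1)n)}$ hits the specific vector $w_k$, and I expect this to be handled by adjusting $y_k$ by socle elements to land $w_k$ inside the image. Ruling out a premature closure at some $m'<m$ uses the pairwise non-isomorphism of the simples $M(l,i+kn)$ for distinct $k\in\mathbb{Z}_m$ (Lemma \ref{3.2}(2)): a short cycle would carve out a proper direct summand, contradicting indecomposability of $M$. Finally, $\eta\neq 0$ is forced, because $\eta=0$ would make $\langle y_{m-1}\rangle$ a submodule of $(1,1)$-type, contrary to hypothesis.
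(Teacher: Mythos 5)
Your structural setup follows the paper closely (weight decomposition, $M^d\cap M_{[l]}=0$ reduction, lifting along $d^{n-1}$), and the observations about $\langle y\rangle$ being of $(1,2)$-type and about premature closure are sound. But the crucial step — that the chain $y_0,y_1,\ldots$ closes after exactly $m$ iterations with $w_{m-1}$ a nonzero scalar multiple of $z_0=d^{n-1}y_0$ — is asserted, not proved, and it is false for an arbitrary starting vector $y_0$. Under the no-$(1,1)$-submodule hypothesis the weight spaces $M_{(i+kn+n-1)}$ all have dimension $t_k$ (which are all equal to some $t_0\geq 1$), so the composition of the $m$ lifting steps induces a linear map $M_{(i+n-1)}\to M_{(i+n-1)}$ that is merely invertible. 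If $t_0>1$ this map has no reason to be a scalar: picking a generic $y_0$ produces a $w_{m-1}$ which is \emph{not} proportional to $z_0$, and the iteration never closes. Indecomposability of $M$ constrains the global picture, but it does not make the return map diagonal.

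The missing idea is precisely where the paper invokes algebraic closure of $\Bbbk$. After writing the lifts in coordinates, one obtains transition matrices $X_0,\ldots,X_{m-1}\in M_{t_0}(\Bbbk)$ (each invertible, or else one produces a $(1,1)$-type submodule), and the return map is the product $X=X_{m-1}\cdots X_1X_0$. One must then \emph{choose} the initial column vector $B$ to be an eigenvector of $X$, $XB=\eta B$ with $\eta\in\Bbbk^{\times}$, which exists because $\Bbbk$ is algebraically closed and $X$ is invertible. With this choice the chain does close after $m$ steps and the resulting basis satisfies Eqs.~\equref{e2}--\equref{e5}, giving the copy of $M_1(l,i,\eta)$. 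Your proof needs to replace ``start from a nonzero $y_0$'' by this eigenvector selection; without it, the construction breaks whenever $t_0\geq 2$. (The other cases you wave at — $M^d\cap M_{[l]}\neq 0$, some $t_k=0$, or a linear dependence among the lifts — do each yield a $(1,1)$-type submodule and are handled separately in the paper; your outline is compatible with that but would need to make the dichotomy explicit.)
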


\begin{proof}
If $t=1$, the result is trivial. Now assume $t>1$.
Then by Proposition \ref{4.1}, $P=\oplus_{k=0}^{m-1}t_kP(l,i+kn)$ is an envelope of $M$ for some $1\<l\<n-1$, $i\in\mathbb Z$ and $t_k\in\mathbb N$ with $\sum_{k=0}^{m-1}t_k=t$. Hence we may assume $M\subseteq{\rm soc}^2P={\rm rad}P=\oplus_{k=0}^{m-1}t_k{\rm rad}P(l,i+kn)$. Then ${\rm soc}M={\rm soc}P=\oplus_{k=0}^{m-1}t_k{\rm soc}P(l,i+kn)$.

For any $1\<s\<t_k$, let $\{v_1^{k,s}, \cdots, v_{n-l}^{k,s}, u_1^{k,s}, \cdots, u_n^{k,s}\}$ be a standard basis of the $s^{th}$ summand ${\rm rad}P(l,i+kn)$ of $t_k{\rm rad}P(l,i+kn)$. Then ${\rm rad}P$ has a $\Bbbk$-basis
$$\{v_1^{k,s}, \cdots, v_{n-l}^{k,s}, u_1^{k,s}, \cdots, u_n^{k,s}|0\<k\<m-1, 1\<s\<t_k\}$$
and ${\rm soc}M={\rm soc}P={\rm span}\{u_1^{k,s}, \cdots, u_l^{k,s}|0\<k\<m-1,1\<s\<t_k\}$.
Let $V={\rm span}\{v_1^{k,s}, \cdots, v_{n-l}^{k,s}, u_{l+1}^{k,s}, \cdots, u_n^{k,s}|0\<k\<m-1,1\<s\<t_k\}$ and $U=M\cap V$. Then as vector spaces,
${\rm rad}P=V\oplus{\rm soc}P$ and $M=U\oplus{\rm soc}M$. For any $0\<j\<l-1$ and $0\<k\<m-1$, $({\rm rad}P)_{(i+j+kn)}={\rm span}\{u_{j+1}^{k,s}|1\<s\<t_k\}$.
For any $l\<j\<n-1$ and $0\<k\<m-1$, $({\rm rad}P)_{(i+j+kn)}={\rm span}\{v_{j+1-l}^{k+1,s'}, u_{j+1}^{k,s}|1\<s'\<t_{k+1},1\<s\<t_k\}$, where we regard $t_m=t_0$ and $v_{j+1-l}^{m,s'}=v_{j+1-l}^{0,s'}$.
Note that ${\rm soc}P=\oplus_{j=0}^{l-1}\oplus_{k=0}^{m-1}({\rm rad}P)_{(i+j+kn)}$
and $V=\oplus_{j=l}^{n-1}\oplus_{k=0}^{m-1}({\rm rad}P)_{(i+j+kn)}$.
Hence ${\rm soc}M=\oplus_{j=0}^{l-1}\oplus_{k=0}^{m-1}M_{(i+j+kn)}$
and $U=\oplus_{j=l}^{n-1}\oplus_{k=0}^{m-1}M_{(i+j+kn)}$. Moreover,
$M_{(i+j+kn)}=({\rm rad}P)_{(i+j+kn)}$ for all $0\<j\<l-1$ and $0\<k\<m-1$.
For any $0\<j\<n-1$, let $P_{[j]}=\oplus_{k=0}^{m-1}({\rm rad}P)_{(i+j+kn)}$ and $M_{[j]}=M\cap P_{[j]}$. Then ${\rm rad}P=\oplus_{j=0}^{n-1}P_{[j]}$, $M_{[j]}=\oplus_{k=0}^{m-1}M_{(i+j+kn)}$ and $M=\oplus_{j=0}^{n-1}M_{[j]}$.
Moreover, $M_{[j]}=P_{[j]}$ for all $0\<j\<l-1$, ${\rm soc}M=\oplus_{j=0}^{l-1}M_{[j]}$ and $U=\oplus_{j=l}^{n-1}M_{[j]}$.
By the structure of ${\rm rad}P$, the maps $M_{[j]}\ra M_{[j-1]}, x\mapsto dx$ and $M_{[j-1]}\ra M_{[j]}, x\mapsto ax$ are both bijective for any $0<j\<n-1$ with $j\neq l$.
Moreover, $dM_{[0]}=0$, $dM_{[l]}\subseteq M_{[l-1]}$, $aM_{[l-1]}=0$ and $aM_{[n-1]}\subseteq M_{[0]}$. It follows that ${\rm dim}M_{[j]}=t$ for all $0\<j\<n-1$.

If $M^d\cap M_{[l]}\neq 0$, then it follows from Lemma \ref{4.22}(2) and $M_{[l]}=\oplus_{k=0}^{m-1}M_{(i+l+kn)}$ that $M^d\cap M_{(i+l+kn)}\neq 0$
for some $0\<k\<m-1$. Since $M_{(i+l+kn)}\subseteq({\rm rad}P)_{(i+l+kn)}$, it follows from the action of $d$ on ${\rm rad}P$ that $M^d\cap M_{(i+l+kn)}\subseteq{\rm span}\{v_1^{k+1,s}|1\<s\<t_{k+1}\}$. Let $0\neq x\in M^d\cap M_{(i+l+kn)}$. Then
$x=\sum_{s=1}^{t_{k+1}}\b_sv_1^{k+1,s}$ for some $\b_1,\cdots, \b_{t_{k+1}}\in\Bbbk$. Then by the proof of Proposition \ref{4.12}, one can see that the submodule $\langle x\rangle$ is isomorphic to $\ol{T}_1(l,i+(k+1)n)$, and hence $M$ contains a submodule of $(1,1)$-type.

Now assume $M^d\cap M_{[l]}=0$. For any $0\<k\<m-1$, let $V_k^0={\rm span}\{v_1^{k+1, s}|1\<s\<t_{k+1}\}$ and $V_k^1={\rm span}\{u_{l+1}^{k, s}|1\<s\<t_k\}$. Then
$({\rm rad}P)_{(i+l+kn)}=V_k^0\oplus V_k^1$ and $M_{(i+l+kn)}\cap V_k^0=0$.
Note that $V_k^0=0$ if $t_{k+1}=0$.
Hence $V_k^0\oplus M_{(i+l+kn)}\subseteq V_k^0\oplus V_k^1$ and so ${\rm dim} M_{(i+l+kn)}\<{\rm dim}V_k^1=t_k$. However, $\sum_{k=0}^{m-1}{\rm dim} M_{(i+l+kn)}={\rm dim}M_{[l]}=t=\sum_{k=0}^{m-1}t_k$. It follows that ${\rm dim} M_{(i+l+kn)}={\rm dim}V_k^1=t_k$ and so $({\rm rad}P)_{(i+l+kn)}=V_k^0\oplus V_k^1=V_k^0\oplus M_{(i+l+kn)}$
for all $0\<k\<m-1$.
If $t_0, t_1, \cdots, t_{m-1}$ are not all nonzero, then we may assume $t_{k+1}=0$ but $t_k\neq 0$ for some $0\<k\<m-1$. In this case, $M_{(i+l+kn)}=V_k^1={\rm span}\{u_{l+1}^{k, s}|1\<s\<t_k\}$, and hence the submodule $\langle u_{l+1}^{k,1}\rangle$ of $M$ is isomorphic to $T_1(l,i+kn)$ by the proof of Proposition \ref{4.12}. Thus, $M$ contains a submodule of $(1,1)$-type. From now on, assume that $t_0, t_1, \cdots, t_{m-1}$ are all nonzero. Then for any $0\<k\<m-1$, there exists a basis $\{x_{k,s}|1\<s\<t_k\}$ of $M_{(i+l+kn)}$ such that $x_{k,s}-u_{l+1}^{k,s}\in V_k^0$ for all $1\<s\<t_k$. Firstly, suppose that $x_{k,1}-u_{l+1}^{k,1}, \cdots, x_{k,t_k}-u_{l+1}^{k,t_k}$ are linearly dependent over $\Bbbk$ for some $0\<k\<m-1$. If $t_k=1$ then $x_{k,1}-u_{l+1}^{k,1}=0$. In this case, $u_{l+1}^{k,1}=x_{k,1}\in M$, and $M$ contains a submodule $\langle u_{l+1}^{k,1}\rangle$ of $(1,1)$-type as above.
If $t_k>1$, then without losing generality, we may assume that $x_{k,t_k}-u_{l+1}^{k,t_k}=\sum_{j=1}^{t_k-1}\b_j(x_{k,j}-u_{l+1}^{k,j})$ for some $\b_j\in\Bbbk$. In this case, $u_{l+1}^{k,t_k}-\sum_{j=1}^{t_k-1}\b_ju_{l+1}^{k,j}=x_{k,t_k}-\sum_{j=1}^{t_k-1}\b_jx_{k,j}\in M$
and  $M$ contains a  submodule $\langle u_{l+1}^{k,t_k}-\sum_{j=1}^{t_k-1}\b_ju_{l+1}^{k,j}\rangle$ of $(1,1)$-type as above. Then suppose that $x_{k,1}-u_{l+1}^{k,1}, \cdots, x_{k,t_k}-u_{l+1}^{k,t_k}$ are linearly independent over $\Bbbk$ for all $0\<k\<m-1$. Then $t_{k+1}={\rm dim}V_k^0\>t_k$ for all $0\<k\<m-1$. Hence $t_0=t_m\>t_{m-1}\>\cdots\>t_1\>t_0$, and so $t_0=t_1=\cdots=t_{m-1}$. Thus, for any $0\<k\<m-1$, there is an invertible matrix $X_k\in M_{t_0}(\Bbbk)$ such that
$$(x_{k,1}-u_{l+1}^{k,1}, x_{k,2}-u_{l+1}^{k,2}, \cdots, x_{k,t_0}-u_{l+1}^{k,t_0})=(v_1^{k+1,1}, v_1^{k+1,2}, \cdots, v_1^{k+1,t_0})X_k.$$
Then $X=X_{m-1}\cdots X_1X_0$ is an invertible matrix in $M_{t_0}(\Bbbk)$. Since $\Bbbk$ is an algebraically closed field, there is a nonzero element $B=(\b_1, \cdots, \b_{t_0})^T\in\Bbbk^{t_0}$ and a nonzero scalar $\eta\in\Bbbk$ such that $XB=\eta B$, where $(\b_1, \cdots, \b_{t_0})^T$ denotes the transposition of $(\b_1, \cdots, \b_{t_0})$. For $0\<k\<m-1$, define $x_{l+1}^k\in M_{(i+l+kn)}$ by
$$\begin{array}{l}
x_{l+1}^0=(x_{0,1},\cdots,x_{0,t_0})B,\
x_{l+1}^1=(x_{1,1},\cdots,x_{1,t_0})X_0B,\
\cdots,\\
x_{l+1}^{m-1}=(x_{m-1,1},\cdots,x_{m-1,t_0})X_{m-2}\cdots X_1X_0B.
\end{array}$$
Then we have
$$\begin{array}{rl}
x_{l+1}^0=&(u_{l+1}^{0,1},\cdots,u_{l+1}^{0,t_0})B+(v_1^{1,1},\cdots,v_1^{1,t_0})X_0B,\\
x_{l+1}^1=&(u_{l+1}^{1,1},\cdots,u_{l+1}^{1,t_0})X_0B+(v_1^{2,1},\cdots,v_1^{2,t_0})X_1X_0B,\\
&\cdots\\
x_{l+1}^{m-1}=&(u_{l+1}^{m-1,1},\cdots,u_{l+1}^{m-1,t_0})X_{m-2}\cdots X_0B+(v_1^{m,1},\cdots,v_1^{m,t_0})X_{m-1}\cdots X_0B\\
=&(u_{l+1}^{m-1,1},\cdots,u_{l+1}^{m-1,t_0})X_{m-2}\cdots X_0B+\eta(v_1^{0,1},\cdots,v_1^{0,t_0})B.\\
\end{array}$$
Note that the map $P_{[n-1]}\ra P_{[l]}, x\mapsto d^{n-l-1}x$ is a bijection and its restriction gives rise to a bijection from $M_{[n-1]}$ onto $M_{[l]}$. Hence if $x\in P_{[n-1]}$ satisfies $d^{n-l-1}x\in M_{[l]}$, then $x\in M_{[n-1]}$.
Define $x_n^0, x_n^1, \cdots, x_n^{m-1}\in P_{[n-1]}$ by
$$\begin{array}{rl}
x_n^0=&(u_n^{0,1},\cdots,u_n^{0,t_0})B+(v_{n-l}^{1,1},\cdots,v_{n-l}^{1,t_0})X_0B,\\
x_n^1=&(u_n^{1,1},\cdots,u_n^{1,t_0})X_0B+(v_{n-l}^{2,1},\cdots,v_{n-l}^{2,t_0})X_1X_0B,\\
&\cdots\\
x_n^{m-1}=&(u_n^{m-1,1},\cdots,u_n^{m-1,t_0})X_{m-2}\cdots X_0B+\eta(v_{n-l}^{0,1},\cdots,v_{n-l}^{0,t_0})B.\\
\end{array}$$
Let $0\<k\<m-1$. It is easy to see that $d^{n-l-1}x_n^k=x_{l+1}^k$, and hence $x_n^k\in M_{[n-1]}$. Let $x_j^k=d^{n-j}x_n^k$ for $1\<j<n$. Then $x_j^k\in M_{(i+j-1+kn)}$ for all $1\<j\<n$. Moreover, $dx_j^k=x_{j-1}^k$ for all $1<j\<n$, and $dx_1^k=0$. Furthermore, we have
$$\begin{array}{l}
x_j^0=(u_j^{0,1},\cdots,u_j^{0,t_0})B,\
x_j^1=(u_j^{1,1},\cdots,u_j^{1,t_0})X_0B,\
\cdots,\\
x_j^{m-1}=(u_j^{m-1,1},\cdots,u_j^{m-1,t_0})X_{m-2}\cdots X_0B\\
\end{array}$$
for all $1\<j\<l$, and
$$\begin{array}{rl}
x_j^0=&(u_j^{0,1},\cdots,u_j^{0,t_0})B+(v_{j-l}^{1,1},\cdots,v_{j-l}^{1,t_0})X_0B,\\
x_j^1=&(u_j^{1,1},\cdots,u_j^{1,t_0})X_0B+(v_{j-l}^{2,1},\cdots,v_{j-l}^{2,t_0})X_1X_0B,\\
&\cdots\\
x_j^{m-1}=&(u_j^{m-1,1},\cdots,u_j^{m-1,t_0})X_{m-2}\cdots X_0B+\eta(v_{j-l}^{0,1},\cdots,v_{j-l}^{0,t_0})B.\\
\end{array}$$
for all $l+1<j<n$. Clearly, the set $\{x_j^k|1\<j\<n, 0\<k\<m-1\}$ is linearly independent.
By Lemma \ref{3.6}(4,7), a straightforward verification shows that
\begin{eqnarray*}
&ax_j^k=\left\{\begin{array}{ll}
\a_j(l,i)x_{j+1}^k,& 1\<j<n, 0\<k\<m-1,\\
\a_n(l,i)x_1^k+x_1^{k+1},& j=n, 0\<k<m-1,\\
\a_n(l,i)x_1^{m-1}+\eta x_1^0,& j=n, k=m-1,\\
\end{array}\right.\\
&bx_j^k=q^{i+l-j}x_j^k,\ 1\<j\<n, 0\<k\<m-1.
\end{eqnarray*}
It follows that $\{x_j^k|1\<j\<n, 0\<k\<m-1\}$ is a basis of the submodule $\langle x_n^0, x_n^1, \cdots, x_n^{m-1}\rangle$ of $M$ and $\langle x_n^0, x_n^1, \cdots, x_n^{m-1}\rangle\cong M_1(l,i,\eta)$.
\end{proof}

\begin{corollary}\label{4.30}
Let $M$ be an indecomposable $H_{mn}(\xi)$-module of $(t,t)$-type. If $m\nmid t$, then $M$ contains a submodule of $(1,1)$-type.
\end{corollary}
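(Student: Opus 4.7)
The plan is to prove the contrapositive: if $M$ is an indecomposable $H_{mn}(\xi)$-module of $(t,t)$-type that contains no submodule of $(1,1)$-type, then $m \mid t$. The approach is essentially to re-examine the proof of \leref{4.29}, noting that the construction of the $M_1(l,i,\eta)$-submodule there forced a cyclic balancing condition on certain multiplicities, which is exactly what delivers the divisibility.

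First I would set up the same framework as in \leref{4.29}. By \prref{4.1}, the injective envelope of $M$ has the form $P = \bigoplus_{k=0}^{m-1} t_k P(l, i+kn)$ for some $1\<l\<n-1$, $0\<i\<n-1$, and non-negative integers $t_k$ with $\sum_{k=0}^{m-1} t_k = t$. I may assume $M \subseteq \mathrm{rad}\, P$. Using the $c$-eigenspace decomposition from \leref{4.22}, the relevant block $M_{[l]} = \bigoplus_{k=0}^{m-1} M_{(i+l+kn)}$ lies inside $\bigoplus_{k=0}^{m-1}(V_k^0 \oplus V_k^1)$, where $V_k^0 = \mathrm{span}\{v_1^{k+1,s}\}$ has dimension $t_{k+1}$ and $V_k^1 = \mathrm{span}\{u_{l+1}^{k,s}\}$ has dimension $t_k$.

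Next I would repeat the two dichotomies from the proof of \leref{4.29} that detect $(1,1)$-submodules. If some $t_{k+1}=0$ while $t_k \neq 0$, then $u_{l+1}^{k,1} \in M_{(i+l+kn)}$ generates a submodule isomorphic to $T_1(l, i+kn)$, which is of $(1,1)$-type, contradicting the hypothesis. Hence all $t_k$ are strictly positive. For each $k$ one then picks a basis $\{x_{k,s}\}$ of $M_{(i+l+kn)}$ with $x_{k,s} - u_{l+1}^{k,s} \in V_k^0$; if for some $k$ these difference vectors are linearly dependent, a nonzero $\Bbbk$-linear combination of the $u_{l+1}^{k,s}$ sits in $M$, again yielding a $(1,1)$-type submodule, a contradiction. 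Therefore $\{x_{k,s} - u_{l+1}^{k,s} : 1\<s\<t_k\}$ is linearly independent in $V_k^0$ for every $k$, which forces $t_{k+1} = \dim V_k^0 \geq t_k$.

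Finally, iterating the inequalities cyclically produces $t_0 \geq t_{m-1} \geq \cdots \geq t_1 \geq t_0$, so $t_0 = t_1 = \cdots = t_{m-1}$, and consequently $t = m\, t_0$. This contradicts $m \nmid t$, so $M$ must contain a submodule of $(1,1)$-type. The main obstacle, conceptually, is recognizing that no fresh argument is needed here: the entire mechanism has already been assembled inside \leref{4.29}, and the corollary is extracted simply by reading off the arithmetic consequence of the cyclic chain of inequalities in the case where no $(1,1)$-submodule exists.
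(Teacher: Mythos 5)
Your strategy is the right one and matches the paper's (the paper's own proof is just ``It follows from the proof of Lemma~\ref{le:4.29}''), but you have dropped one of the three case distinctions that the proof of Lemma~\ref{le:4.29} actually runs through, and that omission creates a real gap. Before any of the arithmetic on the $t_k$'s can begin, the proof must first treat the case $M^d\cap M_{[l]}\neq 0$: there a nonzero $x\in M^d\cap M_{(i+l+kn)}$ lies in $\mathrm{span}\{v_1^{k+1,s}\}$ and generates a submodule isomorphic to $\ol{T}_1(l,i+(k+1)n)$, which is of $(1,1)$-type. Only after excluding that case does one get $M_{(i+l+kn)}\cap V_k^0=0$ for every $k$, and only then does the inequality $\dim M_{(i+l+kn)}\leqslant t_k$ combine with $\sum_k \dim M_{(i+l+kn)} = \dim M_{[l]} = t = \sum_k t_k$ to force $\dim M_{(i+l+kn)} = t_k$.

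Your proposal jumps straight to ``pick a basis $\{x_{k,s}\}_{1\leqslant s\leqslant t_k}$ of $M_{(i+l+kn)}$ with $x_{k,s}-u_{l+1}^{k,s}\in V_k^0$,'' which presupposes exactly this dimension equality; similarly, your first dichotomy (``if $t_{k+1}=0$ while $t_k\neq 0$ then $u_{l+1}^{k,1}\in M$'') needs $M_{(i+l+kn)}=V_k^1$, which again rests on $\dim M_{(i+l+kn)}=t_k$. Neither is available until $M^d\cap M_{[l]}=0$ has been secured. So you have not ``two dichotomies'' but three, and the one you omitted is the one that underwrites the rest. Add it as a first step (under the contrapositive hypothesis it yields the contradiction immediately, just like the other two), and the argument is then complete and coincides with the paper's.
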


\begin{proof}
It follows from the proof of Lemma \ref{4.29}.
\end{proof}

\begin{corollary}\label{4.31}
Let $M$ be a nonzero submodule of $M_1(l,i,\eta)$, where $1\<l\<n-1$, $i\in\mathbb Z$ and $\eta\in\Bbbk^{\times}$. If $l(M/{\rm soc}M)=l({\rm soc}M)$, then $M=M_1(l,i,\eta)$.
\end{corollary}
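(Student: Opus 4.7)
The plan is to decompose $M$ into indecomposable summands, use Lemmas~\ref{4.8}, \ref{4.23}(3), and \ref{4.29} to tightly constrain each summand, and then close with a length comparison.

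First set $s := l({\rm soc}M) = l(M/{\rm soc}M)$ and write $M = \bigoplus_j N_j$ with each $N_j$ an indecomposable submodule of $M_1(l,i,\eta)$. Since $M_1(l,i,\eta)$ is of $(m,m)$-type with $m\>2$ by Lemma~\ref{4.23}(2), Lemma~\ref{4.8} applies to $M_1(l,i,\eta)$ and gives $l(N_j/{\rm soc}N_j)\<l({\rm soc}N_j)$ for every $j$. Summing these inequalities and comparing with the hypothesis forces equality in each, which immediately rules out simple summands (for which the left side is $0$ and the right is $1$). Hence every $N_j$ has Loewy length $2$ and is of $(t_j,t_j)$-type with $\sum_j t_j = s$.

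Next, since $N_j\subseteq M_1(l,i,\eta)$ and $M_1(l,i,\eta)$ has no $(1,1)$-type submodule by Lemma~\ref{4.23}(3), neither does $N_j$. Lemma~\ref{4.29} then supplies a submodule $L_j\subseteq N_j$ with $L_j\cong M_1(l_j',i_j',\eta_j')$ for some $1\<l_j'\<n-1$, $i_j'\in\mathbb Z$ and $\eta_j'\in\Bbbk^{\times}$. Comparing socles via the embedding
$${\rm soc}L_j\cong\bigoplus_{k=0}^{m-1}M(l_j',i_j'+kn)\hookrightarrow{\rm soc}M_1(l,i,\eta)\cong\bigoplus_{k=0}^{m-1}M(l,i+kn)$$
and applying Lemma~\ref{3.2}(2) pins down $l_j'=l$. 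Since $L_j$ is of $(m,m)$-type by Lemma~\ref{4.23}(2), $l(L_j)=2m$, so $2t_j=l(N_j)\>l(L_j)=2m$, giving $t_j\>m$. On the other hand ${\rm soc}M\subseteq{\rm soc}M_1(l,i,\eta)$ yields $s=l({\rm soc}M)\<m$, so $\sum_j t_j=s\<m$. These together force exactly one summand $N_1=M$ with $t_1=s=m$, so $l(M)=2m=l(M_1(l,i,\eta))$. Since $M\subseteq M_1(l,i,\eta)$, equality of lengths gives $M=M_1(l,i,\eta)$.

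The real obstacle is coordinating the three structural lemmas so that each $N_j$ is squeezed into shape: Lemma~\ref{4.8} eliminates off-diagonal $(s,t)$-types with $s\neq t$, Lemma~\ref{4.23}(3) removes the $(1,1)$-type alternative in Lemma~\ref{4.29}, and Lemma~\ref{4.29} then forces $N_j$ to contain a full copy of some $M_1(l,i_j',\eta_j')$ of length $2m$. It is this combination that upgrades the purely numerical equality $l(M/{\rm soc}M)=l({\rm soc}M)$ into the rigid identification $M=M_1(l,i,\eta)$; everything else is bookkeeping with lengths and socles.
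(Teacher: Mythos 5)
Your proof is correct and follows essentially the same route as the paper: both decompose into indecomposable summands, use Lemma~\ref{4.8} to force each summand to be of $(t_j,t_j)$-type, invoke Lemma~\ref{4.23}(3) to exclude $(1,1)$-type submodules, and then apply the dichotomy of Lemma~\ref{4.29} (the paper uses its consequence, Corollary~\ref{4.30}, in contrapositive form to rule out $s<m$) before closing with the length comparison $l(M)=2m=l(M_1(l,i,\eta))$. The only cosmetic difference is that you use the positive branch of Lemma~\ref{4.29} to get the lower bound $t_j\>m$ directly, and your identification of $l_j'=l$ via Lemma~\ref{3.2}(2) is harmless but not needed.
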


\begin{proof}
Let $s:=l(M/{\rm soc}M)=l({\rm soc}M)$. If $s<m$, then it follows from Lemma \ref{4.8} that
$M$ contains a direct summand $N$ of $(t,t)$-type with $1\<t\<s$. By Corollary \ref{4.30}, $N$ contains a submodule of $(1,1)$-type, and so does $M_1(l,i,\eta)$, which contradicts Lemma \ref{4.23}(3). Hence $s=m$, and so $M=M_1(l,i,\eta)$.
\end{proof}

Throughout the following, unless otherwise stated, let $1\<l\<n-1$, $i\in\mathbb Z$ and $\eta\in\Bbbk^{\times}$.

By Corollary \ref{4.26}, $\O^2M_1(l,i,\eta)\cong M_1(l,i,\eta)$. Hence there exists an AR-sequence
$$0\ra M_1(l,i,\eta)\xrightarrow{f_1}M_2(l,i,\eta)\xrightarrow{g_1}M_1(l,i,\eta)\ra 0$$
for a unique module $M_2(l,i,\eta)$ up to isomorphism.

\begin{lemma}\label{4.32}
With the notations above, $M_2(l,i,\eta)$ is an indecomposable module of $(2m,2m)$-type,
$\O^2M_2(l,i,\eta)\cong M_2(l,i,\eta)$, ${\rm soc}M_2(l,i,\eta)\cong\oplus_{k=0}^{m-1}2M(l,i+kn)$ and
     $M_2(l,i,\eta)/{\rm soc}M_2(l,i,\eta)\cong\oplus_{k=0}^{m-1}2M(n-l,i+l+kn)$.
\end{lemma}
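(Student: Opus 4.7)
The plan is to derive all four claims from the defining AR-sequence $0\ra M_1(l,i,\eta)\xrightarrow{f_1}M_2(l,i,\eta)\xrightarrow{g_1}M_1(l,i,\eta)\ra 0$ together with the structural results for $M_1(l,i,\eta)$ already established, following the template used for the $T_t(l,i)$-family in Lemmas \ref{4.16} and \ref{4.17}.

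First I would pin down the Loewy length and the (socle, top) structure. Decomposing $M_2(l,i,\eta)=\bigoplus_j X_j$ into indecomposables, each component of $g_1$ into $X_j$ is an irreducible morphism $X_j\ra M_1(l,i,\eta)$, so by Lemma \ref{4.14} every $X_j$ is of $(s_j,s_j)$-type (in particular of Loewy length $2$, and not simple or projective), giving ${\rm rl}(M_2(l,i,\eta))\<2$; equality holds because the AR-sequence is non-split. Since ${\rm rl}(M_2(l,i,\eta))=2$, one has ${\rm soc}(M_2(l,i,\eta))=J\cdot M_2(l,i,\eta)$, so $g_1$ maps ${\rm soc}(M_2(l,i,\eta))$ onto $J\cdot M_1(l,i,\eta)={\rm soc}(M_1(l,i,\eta))$, with kernel equal to $f_1(M_1(l,i,\eta))\cap{\rm soc}(M_2(l,i,\eta))=f_1({\rm soc}(M_1(l,i,\eta)))$ (using that $f_1$ is injective and $J$-equivariant, so any $f_1(y)$ in the socle forces $f_1(Jy)=0$ hence $y\in{\rm soc}(M_1(l,i,\eta))$). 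The resulting short exact sequence of semisimple modules splits, and Lemma \ref{4.23}(1) yields ${\rm soc}(M_2(l,i,\eta))\cong\bigoplus_{k=0}^{m-1}2M(l,i+kn)$. Passing to the induced quotient-by-socle sequence then gives $M_2(l,i,\eta)/{\rm soc}(M_2(l,i,\eta))\cong\bigoplus_{k=0}^{m-1}2M(n-l,i+l+kn)$, so $M_2(l,i,\eta)$ is of $(2m,2m)$-type.

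For the $\Omega^2$-periodicity, $M_1(l,i,\eta)$ is non-projective, since its Loewy length is $2$ whereas any non-simple indecomposable projective has Loewy length $3$ by Lemma \ref{3.5}(2). Applying the syzygy functor $\Omega^2$ to the defining AR-sequence therefore produces another AR-sequence whose right end is $\Omega^2 M_1(l,i,\eta)\cong M_1(l,i,\eta)$ by Corollary \ref{4.26}; uniqueness of AR-sequences up to isomorphism forces $\Omega^2 M_2(l,i,\eta)\cong M_2(l,i,\eta)$.

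The main obstacle is indecomposability. Assume for contradiction that $M_2(l,i,\eta)=X\oplus Y$ with both summands nonzero, of $(s_X,s_X)$- and $(s_Y,s_Y)$-type respectively with $s_X+s_Y=2m$. I would exploit Lemma \ref{4.23}(3) (there is no $(1,1)$-submodule of $M_1(l,i,\eta)$) together with Corollary \ref{4.31} (the only submodule $N$ of $M_1(l,i,\eta)$ with $l(N/{\rm soc}N)=l({\rm soc}N)$ is $M_1(l,i,\eta)$ itself) to control the component morphisms $\pi_X\circ f_1$ and $\pi_Y\circ f_1$ from $M_1(l,i,\eta)$ into $X$ and $Y$, and dually the components $g_1|_X$ and $g_1|_Y$. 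The target is to prove the analogue of Lemma \ref{4.15}: that $f_1(M_1(l,i,\eta))$ is the unique submodule of $M_2(l,i,\eta)$ of $(m,m)$-type. Given this uniqueness, a decomposition $M_2(l,i,\eta)=X\oplus Y$ would either exhibit a second $(m,m)$-submodule inside $X$ or $Y$, or yield a nontrivial section of $g_1$ and hence split the AR-sequence, in either case producing a contradiction. This uniqueness argument, carried out in the spirit of the proof of \cite[Lemma 4.10]{Ch3} and relying crucially on Lemma \ref{4.9} to intersect $(s,s)$-type submodules, is the technical heart of the proof.
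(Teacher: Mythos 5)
Your derivations of the socle and top of $M_2(l,i,\eta)$ from the AR-sequence, and of $\Omega^2$-periodicity from Corollary~\ref{4.26} together with uniqueness of almost split sequences, are correct and fill in what the paper dismisses with ``clearly.''

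The indecomposability argument, however, contains a genuine gap and in fact a circularity. You propose to first prove that ${\rm Im}\,f_1$ is the unique $(m,m)$-type submodule of $M_2(l,i,\eta)$, ``relying crucially on Lemma~\ref{4.9}'' to intersect balanced submodules. But Lemma~\ref{4.9} is stated only for an \emph{indecomposable} ambient module of $(t,t)$-type, which is exactly what you are trying to establish for $M_2(l,i,\eta)$; the paper proves the uniqueness statement you want only afterwards, as Lemma~\ref{4.33}, applying Lemma~\ref{4.9} with $M=M_2(l,i,\eta)$ once indecomposability is already known. Your suggested order of argument therefore begs the question. Moreover, even granting uniqueness, the closing dichotomy (``a second $(m,m)$-submodule inside $X$ or $Y$, or a nontrivial section of $g_1$'') is not exhaustive as stated: if $M_2(l,i,\eta)=X\oplus Y$ with $s_X<m<s_Y$, neither branch applies without substantial extra work (one would need Corollary~\ref{4.30}, Proposition~\ref{4.21} and Theorems~\ref{4.18}/\ref{4.19} to manufacture a second $(m,m)$-submodule inside $Y$), and none of that is sketched.

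The paper's route avoids all of this and turns on an observation absent from your proposal. If $N$ is a nontrivial indecomposable direct summand of $M_2(l,i,\eta)$, then $g_1|_N\colon N\to M_1(l,i,\eta)$ is irreducible, so by Lemma~\ref{4.14} $N$ is of $(t,t)$-type with $1\leqslant t<2m$; and crucially $t\neq m$, since an irreducible morphism between two modules of the same length would have to be an isomorphism. Hence $m\nmid t$, so Corollary~\ref{4.30} (which you never invoke) produces a $(1,1)$-type submodule $K\subseteq N$. Since ${\rm Ker}\,g_1={\rm Im}\,f_1\cong M_1(l,i,\eta)$ contains no $(1,1)$-submodule by Lemma~\ref{4.23}(3), one gets $K\cap{\rm Ker}\,g_1=0$, so $K\cong g_1(K)\subseteq M_1(l,i,\eta)$, contradicting Lemma~\ref{4.23}(3) once more. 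Corollary~\ref{4.30}, together with the $t\neq m$ step that activates it, is the missing ingredient.
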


\begin{proof}
Suppose on the contrary that $M_2(l,i,\eta)$ has a nontrivial indecomposable direct summand $N$. Then $g_1|_N: N\ra M_1(l,i,\eta)$ is an irreducible
morphism. By Lemma \ref{4.14}, $N$ is of $(t,t)$-type and $1\<t<2m$. Clearly, $t\neq m$, and hence $N$ contains a submodule $K$ of $(1,1)$-type  by Corollary \ref{4.30}.
By Lemma \ref{4.9}, $K\subseteq{\rm Ker}g_1$ or $K\cap{\rm Ker}g_1=0$.
Since ${\rm Ker}g_1={\rm Im}f_1\cong M_1(l,i,\eta)$, $K\nsubseteq{\rm Ker}g_1$ by Lemma \ref{4.23}(3). Hence
$K\cap{\rm Ker}g_1=0$. This implies that $K\cong g_1(K)\subseteq M_1(l,i,\eta)$, which contradicts Lemma \ref{4.23}(3). This shows that $M_2(l,i,\eta)$ is indecomposable. Obviously, $M_2(l,i,\eta)$ is of $(2m,2m)$-type. Since $\O^2M_1(l,i,\eta)\cong M_1(l,i,\eta)$, one gets  $\O^2M_2(l,i,\eta)\cong M_2(l,i,\eta)$. Clearly, ${\rm soc}M_2(l,i,\eta)\cong\oplus_{k=0}^{m-1}2M(l,i+kn)$ and  $M_2(l,i,\eta)/{\rm soc}M_2(l,i,\eta)\cong\oplus_{k=0}^{m-1}2M(n-l,i+l+kn)$.
\end{proof}

\begin{lemma}\label{4.33}
Let $N$ be a submodule of $(t,t)$-type of $M_2(l,i,\eta)$. Then either $N={\rm Im}f_1$ or $N=M_2(l,i,\eta)$.
Moreover, $M_2(l,i,\eta)/{\rm Im}f_1\cong M_1(l,i,\eta)$.
\end{lemma}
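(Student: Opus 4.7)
The strategy combines Lemma \ref{4.9} (the intersection closure of the balanced-length property), the rigidity of $M_1(l,i,\eta)$ given by Corollary \ref{4.31}, and non-splitness of the defining Auslander--Reiten sequence of $M_2(l,i,\eta)$. First, I would apply Lemma \ref{4.9} to $L:={\rm Im}f_1$ and $N$: both satisfy $l(X/{\rm soc}X)=l({\rm soc}X)$, hence so does $L\cap N$. Since $L\cap N$ is a submodule of $L\cong M_1(l,i,\eta)$, Corollary \ref{4.31} forces $L\cap N=0$ or $L\cap N=L$, and I then split into these two cases.

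The case $L\cap N=0$ is ruled out as follows. Since $\ker g_1={\rm Im}f_1=L$, the restriction $g_1|_N:N\to M_1(l,i,\eta)$ is injective, so $g_1(N)\cong N$ is a nonzero submodule of $M_1(l,i,\eta)$ of $(t,t)$-type. A second application of Corollary \ref{4.31} gives $g_1(N)=M_1(l,i,\eta)$, so $g_1|_N$ is an isomorphism, which would split the AR-sequence, a contradiction.

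In the remaining case $L\subseteq N$, I assume $N\neq{\rm Im}f_1$ and aim to show $N=M_2(l,i,\eta)$. Put $K:=g_1(N)\cong N/{\rm Im}f_1$, a nonzero submodule of $M_1(l,i,\eta)$ with $l(K)=2t-2m$ and radical length at most $2$. If $K$ were semisimple, then ${\rm rad}N\subseteq{\rm Im}f_1$ would force ${\rm soc}N\subseteq{\rm soc}({\rm Im}f_1)$, whence $t=m$ and $K=0$, contradicting $N\neq{\rm Im}f_1$. So $K$ has radical length $2$, and by Lemma \ref{4.8} its $(s',t')$-type satisfies $s'\leq t'$. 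Using $L\subseteq N$ one has ${\rm soc}N\cap{\rm Im}f_1={\rm soc}({\rm Im}f_1)$, so $l(g_1({\rm soc}N))=l({\rm soc}N)-l({\rm soc}({\rm Im}f_1))=t-m$; since $g_1({\rm soc}N)\subseteq{\rm soc}K$ this forces $t'\geq t-m$. Together with $s'+t'=2(t-m)$ and $s'\leq t'$ one finds $s'=t'=t-m$, and Corollary \ref{4.31} yields $K=M_1(l,i,\eta)$. Combined with $\ker g_1={\rm Im}f_1\subseteq N$, this gives $N=M_2(l,i,\eta)$. The ``moreover'' statement $M_2(l,i,\eta)/{\rm Im}f_1\cong M_1(l,i,\eta)$ is then immediate from exactness of the AR-sequence. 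The most delicate point in the plan is the determination of the $(s',t')$-type of $K$ via the socle intersection computation; everything else is an orchestration of the results established above.
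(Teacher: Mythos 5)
Your first half --- applying Lemma \ref{4.9} to $N$ and ${\rm Im}f_1$, invoking Corollary \ref{4.31} to conclude $N\cap{\rm Im}f_1$ is $0$ or all of ${\rm Im}f_1$, and ruling out the trivial intersection via non-splitness of the AR-sequence --- is correct and follows essentially the same path as the paper (the paper phrases the contradiction as $M_2(l,i,\eta)=N\oplus{\rm Im}f_1$ violating indecomposability, which amounts to the same thing).

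The case ${\rm Im}f_1\subsetneq N$ is where you diverge from the paper, and where there is a genuine gap. You set $K:=g_1(N)\cong N/{\rm Im}f_1\subseteq M_1(l,i,\eta)$, say of $(s',t')$-type, and want to apply Corollary \ref{4.31}, which requires $s'=t'$. But the three constraints you invoke --- $s'+t'=2(t-m)$, $s'\leq t'$ from Lemma \ref{4.8}, and $t'\geq t-m$ from $g_1({\rm soc}N)\subseteq{\rm soc}K$ --- do not force $s'=t'$. Indeed, given $s'+t'=2(t-m)$, the inequalities $s'\leq t'$ and $t'\geq t-m$ say the same thing, so you effectively have only two constraints, and these are satisfied by $(s',t')=(t-m-j,\,t-m+j)$ for every $0\leq j\leq t-m$. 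Your ``$K$ not semisimple'' step excludes only the extreme $j=t-m$. To reach Corollary \ref{4.31} you would need an independent argument that $K$ is balanced (for instance, that it has no simple direct summand and no indecomposable summand of $(r,r+1)$-type sitting inside $M_1(l,i,\eta)$), and nothing in the proposal supplies this.

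The paper sidesteps this type-of-$K$ analysis entirely by a divisibility argument: from ${\rm Im}f_1\subseteq N$ one has $m\leq t\leq 2m$, and if $m<t<2m$ then $m\nmid t$, so Corollary \ref{4.30} produces a $(1,1)$-type submodule $K$ of $N$; Lemma \ref{4.9} then says $K\cap{\rm Im}f_1$ is $0$ or $K$, and either option contradicts Lemma \ref{4.23}(3) because it would exhibit a $(1,1)$-type submodule of $M_1(l,i,\eta)$ (either $K\subseteq{\rm Im}f_1\cong M_1(l,i,\eta)$ directly, or $K\cong g_1(K)\subseteq M_1(l,i,\eta)$). That numerical dichotomy, which leans on Corollary \ref{4.30}, is the ingredient your plan is missing; I would replace your computation of the type of $K$ with it.
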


\begin{proof}
Let $L:=N\cap{\rm Im}f_1$. Then $l(L/{\rm soc}L)=l({\rm soc}L)$ by Lemma \ref{4.9}.
If $L=0$, then $N\cong g_1(N)\subseteq M_1(l,i,\eta)$ by ${\rm Im}f_1={\rm Ker}g_1$. In this case, by Corollary \ref{4.31}, $g_1(N)=M_1(l,i,\eta)$, and so $M_2(l,i,\eta)=N\oplus{\rm Im}f_1$, a contradiction. Hence $L\neq 0$. Note that $L\subseteq{\rm Im}f_1\cong M_1(l,i,\eta)$.  Again by Corollary \ref{4.31}, $L={\rm Im}f_1$, and hence ${\rm Im}f_1\subseteq N$. This implies $m\<t\<2m$. If $m<t<2m$, then $N$ contains a submodule $K$ of $(1,1)$-type by Corollary \ref{4.30}. By Lemma \ref{4.9}, one knows that either $K\subseteq{\rm Im}f_1$ or $K\cap{\rm Im}f_1=0$.
Since ${\rm Im}f_1\cong M_1(l,i,\eta)$, $K\nsubseteq{\rm Im}f_1$ by Lemma \ref{4.23}(3). Hence $K\cap{\rm Im}f_1=0$, which implies $K\cong g_1(K)\subseteq M_1(l,i,\eta)$, a contradiction. Hence $t=m$ or $t=2m$. Thus, either $N={\rm Im}f_1$ or $N=M_2(l,i,\eta)$.
Moreover, $M_2(l,i,\eta)/{\rm Im}f_1=M_2(l,i,\eta)/{\rm Ker}g_1\cong M_1(l,i,\eta)$.
\end{proof}

\begin{corollary}\label{4.34}
For any $1\<l'\<n-1$ and $i'\in\mathbb Z$, $M_2(l,i,\eta)\ncong T_{2m}(l',i')$ and $M_2(l,i,\eta)\ncong\ol{T}_{2m}(l',i')$.
\end{corollary}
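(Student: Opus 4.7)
The plan is to argue by contradiction, exploiting the very restrictive list of submodules of $M_2(l,i,\eta)$ provided by Lemma \ref{4.33}. Suppose first that $M_2(l,i,\eta)\cong T_{2m}(l',i')$ for some $1\<l'\<n-1$ and $i'\in\mathbb Z$. Applying Theorem \ref{4.18}(2) with $j=1$, the module $T_{2m}(l',i')$ contains a (unique) submodule of $(1,1)$-type, namely the copy isomorphic to $T_1(l',i')$. Transporting this submodule along the hypothesized isomorphism produces a submodule of $(1,1)$-type inside $M_2(l,i,\eta)$.

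On the other hand, Lemma \ref{4.33} asserts that every submodule $N$ of $M_2(l,i,\eta)$ with $l(N/{\rm soc}N)=l({\rm soc}N)$ is either $\mathrm{Im}\,f_1$, which is of $(m,m)$-type, or all of $M_2(l,i,\eta)$, which is of $(2m,2m)$-type. Under the standing hypothesis $m>1$ neither of these is of $(1,1)$-type, so no such submodule exists inside $M_2(l,i,\eta)$. This contradicts the conclusion of the previous paragraph, ruling out $M_2(l,i,\eta)\cong T_{2m}(l',i')$. The case $M_2(l,i,\eta)\cong\ol{T}_{2m}(l',i')$ is handled by the identical argument, using Theorem \ref{4.19}(2) in place of Theorem \ref{4.18}(2) to produce a $(1,1)$-submodule isomorphic to $\ol{T}_1(l',i')$ and then invoking the same dichotomy from Lemma \ref{4.33} to derive a contradiction.

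I do not foresee any genuine obstacle: the entire corollary is a bookkeeping consequence of the classification of $(t,t)$-type submodules established in Lemma \ref{4.33} together with the uniqueness clauses in Theorems \ref{4.18}(2) and \ref{4.19}(2). The only point to keep explicit is the hypothesis $m>1$ built into the paper's setup, which is exactly what prevents the $(m,m)$-summand $\mathrm{Im}\,f_1$ from degenerating into a $(1,1)$-submodule and thus is what makes the contradiction go through.
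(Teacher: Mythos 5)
Your argument is correct and coincides with the paper's own proof: both deduce from Lemma \ref{4.33} that the only $(t,t)$-type submodules of $M_2(l,i,\eta)$ are ${\rm Im}f_1$ (of $(m,m)$-type) and the whole module (of $(2m,2m)$-type), so since $m>1$ there is no $(1,1)$-type submodule, whereas $T_{2m}(l',i')$ and $\ol{T}_{2m}(l',i')$ do contain one by Theorems \ref{4.18}(2) and \ref{4.19}(2). Your explicit remark that $m>1$ is what keeps ${\rm Im}f_1$ from being of $(1,1)$-type is a correct and worthwhile clarification of the standing hypothesis.
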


\begin{proof}
By Lemma \ref{4.33}, $M_2(l,i,\eta)$ does not contain any submodule of $(1,1)$-type.
However, both $T_{2m}(l',i')$ and $\ol{T}_{2m}(l',i')$ contain a submodule of $(1,1)$-type by Theorems \ref{4.18}(2) and \ref{4.19}(2). Thus, the corollary follows.
\end{proof}

Since $\O^2M_2(l,i,\eta)\cong M_2(l,i,\eta)$, by \cite[Proposiotns V.1.12 and 5.3]{ARS}, there is a unique $H_{mn}(\xi)$-module $M_3(l,i,\eta)$ up to isomorphism which fits an AR-sequence
\begin{eqnarray*}
0\rightarrow M_2(l,i,\eta)\xrightarrow{\left(
	\begin{matrix}
	g_1 \\
	f_2 \\
	\end{matrix}
	\right)
} M_1(l,i,\eta)\oplus M_3(l,i,\eta)\xrightarrow{(f'_1, g_2)} M_2(l,i,\eta)\rightarrow 0.
\end{eqnarray*}

\begin{lemma}\label{4.35}
Retain the above notations.
\begin{enumerate}
\item[(1)] $M_3(l,i,\eta)$ is an indecomposable module of $(3m,3m)$-type,
${\rm soc}M_3(l,i,\eta)\cong\oplus_{k=0}^{m-1}3M(l,i+kn)$ and
$M_3(l,i,\eta)/{\rm soc}M_3(l,i,\eta)\cong\oplus_{k=0}^{m-1}3M(n-l,i+l+kn)$.
\item[(2)] If $N$ is a submodule of $(t,t)$-type of $M_3(l,i,\eta)$ with $1\<t<3m$, then $N={\rm Im}(f_2f_1)$ or ${\rm Im}f_2$. Moreover, $M_3(l,i,\eta)/{\rm Im}(f_2f_1)\cong M_2(l,i,\eta)$ and $M_3(l,i,\eta)/{\rm Im}f_2\cong M_1(l,i,\eta)$.
\item[(3)] $\O^2M_3(l,i,\eta)\cong M_3(l,i,\eta)$.
\item[(4)] $M_3(l,i,\eta)\ncong T_{3m}(l',i')$ and $M_3(l,i,\eta)\ncong\ol{T}_{3m}(l',i')$ for any $1\<l'\<n-1$ and $i\in\mathbb Z$.
\end{enumerate}
\end{lemma}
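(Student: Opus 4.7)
The proof will mirror the template used for $T_3(l,i)$ in Lemma~\ref{4.16} and for $T_{t+1}(l,i)$ in Lemma~\ref{4.17}, adapted to the family $M_j(l,i,\eta)$ whose key distinguishing feature (versus the $T_j$ and $\ol{T}_j$ families) is the absence of any $(1,1)$-type submodule by Lemma~\ref{4.23}(3). A single observation will drive the whole argument: any $(1,1)$-type submodule $N'$ of $M_3(l,i,\eta)$ must, by Lemma~\ref{4.9}, satisfy either $N'\subseteq{\rm Im}f_2$ or $N'\cap{\rm Im}f_2=0$; in the first case $N'$ is a $(1,1)$-type submodule of ${\rm Im}f_2\cong M_2(l,i,\eta)$, which by Lemma~\ref{4.33} is forbidden (the only $(s,s)$-type submodules of $M_2(l,i,\eta)$ are $0$, ${\rm Im}f_1\cong M_1(l,i,\eta)$, and $M_2$ itself, none of which is $(1,1)$-type by Lemma~\ref{4.23}(3)), and in the second case $g_2|_{N'}$ is injective so $g_2(N')$ is the same forbidden object. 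Thus, once I show $M_3(l,i,\eta)$ to be well-defined and the argument is in place, $M_3(l,i,\eta)$ contains no $(1,1)$-type submodule, and I will use this repeatedly.

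For~(1), suppose $M_3(l,i,\eta)=N\oplus K$ with $N\neq 0$ indecomposable. The almost-split property forces $g_2|_N$ to be irreducible, so by Lemma~\ref{4.14}, $N$ is of $(t,t)$-type for some $1\leq t<3m$. If $m\nmid t$, Corollary~\ref{4.30} extracts a $(1,1)$-type submodule of $N$, contradicting the observation above. If $t=m$, Lemma~\ref{4.29} forces $N$ to contain a copy of some $M_1(l',i',\eta')$, and a dimension count gives $N\cong M_1(l',i',\eta')$; intersecting with ${\rm Im}f_2$ and applying Lemma~\ref{4.33} pins down the parameters, and delivers $M_3=N\oplus{\rm Im}f_2$, which makes $g_2|_N$ an isomorphism and the defining AR-sequence split --- a contradiction. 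The case $t=2m$ is ruled out by a length comparison against $l({\rm Im}f_2)=4m$ inside $M_3$ (of length $6m$). Thus $M_3(l,i,\eta)$ is indecomposable; then the $(3m,3m)$-type conclusion and the socle/top identifications in~(1) follow from $l(M_3)=2l(M_2)-l(M_1)=6m$ (read off from the AR-sequence) together with Proposition~\ref{4.1}.

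For~(2), given $N\subseteq M_3(l,i,\eta)$ of $(t,t)$-type with $1\leq t<3m$, set $L:=N\cap{\rm Im}f_2$. Lemma~\ref{4.9} yields matching top/socle lengths for $L$, and Lemma~\ref{4.33} confines $L$ to $\{0,{\rm Im}(f_2f_1),{\rm Im}f_2\}$. If $L=0$, then $g_2|_N$ is injective and $g_2(N)$ is a $(t,t)$-type submodule of $M_2(l,i,\eta)$, so $t\in\{m,2m\}$ by Lemma~\ref{4.33}; $t=2m$ fails by length, and $t=m$ would split $M_3(l,i,\eta)$, contradicting~(1). If $L={\rm Im}f_2$, then $t\geq 2m$: $t=2m$ gives $N={\rm Im}f_2$, while $2m<t<3m$ forces $m\nmid t$ and Corollary~\ref{4.30} delivers a $(1,1)$-type submodule of $M_3(l,i,\eta)$, forbidden by the opening observation. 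Hence $L={\rm Im}(f_2f_1)$, so ${\rm Im}(f_2f_1)\subseteq N$; length comparison forces $N={\rm Im}(f_2f_1)$ for $t=m$ and $N={\rm Im}f_2$ for $t=2m$. The quotient identifications $M_3/{\rm Im}(f_2f_1)\cong M_2(l,i,\eta)$ and $M_3/{\rm Im}f_2\cong M_1(l,i,\eta)$ are then immediate from the AR-sequence.

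For~(3), applying $\Omega^2$ to the defining AR-sequence and using $\Omega^2 M_1(l,i,\eta)\cong M_1(l,i,\eta)$ (Corollary~\ref{4.26}) together with $\Omega^2 M_2(l,i,\eta)\cong M_2(l,i,\eta)$ (Lemma~\ref{4.32}) produces an AR-sequence with the same end terms as the original; uniqueness of the middle term (up to the known summand $M_1$) yields $\Omega^2 M_3(l,i,\eta)\cong M_3(l,i,\eta)$. Claim~(4) is then immediate: by~(2) the only proper $(t,t)$-type submodules of $M_3(l,i,\eta)$ are $M_1(l,i,\eta)$ and $M_2(l,i,\eta)$, which fail to contain $(1,1)$-type submodules, whereas $T_{3m}(l',i')$ and $\ol{T}_{3m}(l',i')$ always do by Theorems~\ref{4.18}(2) and~\ref{4.19}(2). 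The main obstacle I anticipate is the $t=m$ case in (1): identifying the parameters of the $M_1(l',i',\eta')$ produced inside $N$ via Lemma~\ref{4.29}, and then verifying that it actually forces the claimed splitting $M_3=N\oplus{\rm Im}f_2$ of the non-split AR-sequence, requires careful bookkeeping of the action of $a,b,c,d$ on the standard bases built in Lemma~\ref{4.29} and a compatibility check with the map $f_2$.
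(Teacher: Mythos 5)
There is a circularity at the heart of your argument. Your ``single observation'' (that $M_3(l,i,\eta)$ contains no $(1,1)$-type submodule) is derived by applying Lemma \ref{4.9} to the pair $N'$, ${\rm Im}f_2$ inside $M_3(l,i,\eta)$ --- but Lemma \ref{4.9} requires its ambient module to be an \emph{indecomposable} module of $(t,t)$-type, which is exactly what part (1) is supposed to establish. You then invoke this observation inside the proof of (1) (to kill the case $m\nmid t$, and implicitly again in the $t=m$ case, where the ``intersecting with ${\rm Im}f_2$'' step also silently applies Lemma \ref{4.9} to $M_3$). The paper breaks this circle with an ingredient your proposal never uses: if the putative indecomposable summand $N$ of $(t,t)$-type contains a $(1,1)$-type submodule, then Proposition \ref{4.21} identifies $N$ with some $T_t(l',i')$ or $\ol{T}_t(l',i')$, and the irreducible morphism $g_2|_N:N\ra M_2(l,i,\eta)$ is then impossible because the complete AR-sequences of Theorems \ref{4.18}(4) and \ref{4.19}(4) show that the only indecomposable targets of irreducible maps out of $T_t$ (resp.\ $\ol{T}_t$) are $T_{t\pm1}$ (resp.\ $\ol{T}_{t\pm1}$), and $M_2(l,i,\eta)$ is none of these by Corollary \ref{4.34}. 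Without this step (or some substitute), the $m\nmid t$ case in (1), and likewise the $m\nmid t$ case in (2), remain unproven.

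Two smaller points. First, in your $t=m$ subcase the conclusion ``$M_3=N\oplus{\rm Im}f_2$, which makes $g_2|_N$ an isomorphism'' cannot be right as stated, since $l(N)=2m\neq 4m=l(M_2(l,i,\eta))$; the actual contradiction (both in this subcase and in the paper's injective case) is that all of $g_2(M_3)$ would land in ${\rm Im}f_1$, so $(f_1',g_2)$ could not be surjective, or equivalently $M_2(l,i,\eta)$ would decompose. Second, your dismissal of $t=2m$ in (1) ``by a length comparison against $l({\rm Im}f_2)=4m$'' is not an argument: $N\cap{\rm Im}f_2\neq 0$ is no contradiction by itself; one must pass to the complementary summand of length $2m$ and rerun the $t=m$ analysis. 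Parts (2)--(4) of your sketch are essentially the paper's argument and are fine \emph{once} (1) and the no-$(1,1)$-submodule claim are legitimately in place.
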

\begin{proof}
(1) Suppose on the contrary that $M_3(l,i,\eta)$ is decomposable. By Lemma \ref{4.14},  $M_3(l,i,\eta)$ has a nontrivial indecomposable direct summand $N$ of $(s,s)$-type with $1\<s<3m$. Then $g_2|_N: N\ra M_2(l,i,\eta)$ is an irreducible morphism. Hence $g_2|_N$ is injective or surjective, but $g_2|_N$ is not bijective.  If $g_2|_N$ is surjective, then $2m<s<3m$, and hence $N$ contains a submodule of $(1,1)$-type by Corollary \ref{4.30}. Thus, by Proposition \ref{4.21},
$N\cong T_s(l',i')$ or $N\cong\ol{T}_s(l',i')$ for some $1\<l'\<n-1$ and $i'\in\mathbb Z$. This implies that there is an irreducible morphism from $T_s(l',i')$ to $M_2(l,i,\eta)$ or from $\ol{T}_s(l',i')$ to $M_2(l,i,\eta)$, which contradicts
Theorem \ref{4.18}(4) or Theorem \ref{4.19}(4) by Corollary \ref{4.34}. Hence $g_2|_N$ is injective. So $s<2m$ and $N\cong g_2(N)\subsetneqq M_2(l,i,\eta)$. Then by Lemma \ref{4.33}, $s=m$ and $g_2(N)={\rm Im}f_1\cong M_1(l,i,\eta)$.
Thus, $M_3(l,i,\eta)$ has to be decomposed into a direct sum of three indecomposable submodules of $(m,m)$-type, and
hence ${\rm Im}(f_1',g_2)={\rm Im}f_1$, a contradiction. This shows that $M_3(l,i,\eta)$ is an indecomposable module of $(3m,3m)$-type. Clearly, ${\rm soc}M_3(l,i,\eta)\cong\oplus_{k=0}^{m-1}3M(l,i+kn)$ and $M_3(l,i,\eta)/{\rm soc}M_3(l,i,\eta)\cong\oplus_{k=0}^{m-1}3M(n-l,i+l+kn)$.

(2) Let $N$ be a submodule of $(t,t)$-type of $M_3(l,i,\eta)$ with $1\<t<3m$. If $m\nmid t$, then
by Corollary \ref{4.30}, $N$ contains a submodule of $(1,1)$-type, and so does $M_3(l,i,\eta)$.
By Proposition \ref{4.21}, $M_3(l,i,\eta)\cong T_{3m}(l',i')$ or $M_3(l,i,\eta)\cong\ol{T}_{3m}(l',i')$ for some $1\<l'\<n-1$ and $i'\in\mathbb Z$. This contradicts Theorem \ref{4.18}(4) or Theorem \ref{4.19}(4) as stated above, since $g_2$ is an irreducible morphism. Hence $m|t$.

Case 1: $t=m$. Since $f_2: M_2(l,i,\eta)\rightarrow M_3(l,i,\eta)$ is an irreducible morphism, $f_2$ is injective, and so ${\rm Im}f_2$ is a submodule of $(2m,2m)$-type of $M_3(l,i,\eta)$. If $N\cap {\rm Im}f_2=0$, then $M_3(l,i,\eta)=N\oplus {\rm Im}f_2$, which is impossible. Hence $L:=N\cap{\rm Im}f_2\neq 0$. By Lemma \ref{4.9}, $l(L/{\rm soc}L)=l({\rm soc}L)\<m$. By Lemma \ref{4.8}, $L$ contains a submodule $K$ of $(s,s)$-type with $s\<m$. Then by the previous paragraph, $s=m$. This forces $N=L\subsetneqq{\rm Im}f_2\cong M_2(l,i,\eta)$, and so $N={\rm Im}(f_2f_1)\cong M_1(l,i,\eta)$ by Lemma \ref{4.33}. Since $g_2$ is an irreducible morphism, it is surjective, and hence $l({\rm Ker}g_2/{\rm soc}({\rm Ker}g_2))=l({\rm soc}({\rm Ker}g_2))=m$. If ${\rm Ker}g_2$ is decomposable, then by Lemma \ref{4.8}, ${\rm Ker}g_2$ has a direct summand of $(s,s)$-type with $1\<s<m$, a contradiction. Hence ${\rm Ker}g_2$ is an indecoposable module of $(m,m)$-type. Thus, ${\rm Ker}g_2={\rm Im}(f_2f_1)$, and so $M_3(l,i,\eta)/{\rm Im}(f_2f_1)=M_3(l,i,\eta)/{\rm Ker}g_2\cong M_2(l,i,\eta)$.

Case 2: $t=2m$. In this case, $K:=N\cap {\rm Ker}g_2\neq 0$ since $M_3(l,i,\eta)$ is indecomposable, and $l(K/{\rm soc}K)=l({\rm soc}K)$ by Lemma \ref{4.9}.
Then by Corollary \ref{4.31} and $K\subseteq{\rm Ker}g_2={\rm Im}(f_2f_1)\cong M_1(l,i,\eta)$, one gets $K={\rm Ker}g_2$, and so ${\rm Ker}g_2\subseteq N$. In particular, ${\rm Ker}g_2\subseteq{\rm Im}f_2$. Hence ${\rm Ker}g_2\subseteq N\cap{\rm Im}f_2$.
If ${\rm Ker}g_2=N\cap{\rm Im}f_2$, then $l(N+{\rm Im}f_2)=l(N)+l({\rm Im}f_2)-l({\rm Ker}g_2)=6m$, and so $N+{\rm Im}f_2=M_3(l,i,\eta)$, which implies $M_2(l,i,\eta)=g_2(N+{\rm Im}f_2)=g_2(N)\oplus g_2({\rm Im}f_2)$, a contradiction. Hence ${\rm Ker}g_2\neq N\cap {\rm Im}f_2$. Let $L_1:=N\cap {\rm Im}f_2$. Then $l(L_1/{\rm soc}L_1)=l({\rm soc}L_1)$ by Lemma \ref{4.9}. If $L_1\neq {\rm Im}f_2$, $m<l({\rm soc}L_1)<2m$. In this case, whether $L_1$ is indecomposable or not, $L_1$ contains an indecomposable module of $(s_1,s_1)$ with $m\nmid s_1$, a contradiction.
Hence $L_1={\rm Im}f_2$, and so $N={\rm Im}f_2$.
Now by Lemma \ref{4.33}, ${\rm Im}f_2/{\rm Im}(f_2f_1)\cong M_1(l,i,\eta)$ and hence $M_3(l,i,\eta)/{\rm Im}f_2\cong(M_3(l,i,\eta)/{\rm Im}(f_2f_1))/({\rm Im}f_2/{\rm Im}(f_2f_1))\cong M_2(l,i,\eta)/{\rm Im}f_1\cong M_1(l,i,\eta)$.

 (3) Since $\O^2M_1(l,i,\eta)\cong M_1(l,i,\eta)$ and $\O^2M_2(l,i,\eta)\cong M_2(l,i,\eta)$, an argument similar to $T_2(l,i)$ shows that $\O^2M_3(l,i,\eta)\cong M_3(l,i,\eta)$.

(4) By  (2) and Theorems \ref{4.18}(2) and \ref{4.19}(2), $M_3(l,i,\eta)\ncong T_{3m}(l',i')$ and $M_3(l,i,\eta)\ncong\ol{T}_{3m}(l',i')$ for any $1\<l'\<n-1$ and $i\in\mathbb Z$.
\end{proof}

Now suppose $t\>3$ and we have got indecomposable modules
$$M_1(l,i,\eta), M_2(l,i,\eta),\cdots,  M_t(l,i,\eta)$$
and AR-sequences
\begin{eqnarray*}
&0\rightarrow M_1(l,i,\eta)\xrightarrow{f_1}M_2(l,i,\eta)\xrightarrow{g_1} M_1(l,i,\eta)\rightarrow 0,\\
&0\rightarrow M_{s-1}(l,i,\eta)\xrightarrow{\left(\begin{matrix}
	g_{s-2} \\
	f_{s-1} \\
	\end{matrix}\right)}
M_{s-2}(l,i,\eta)\oplus M_s(l,i,\eta)\xrightarrow{(f'_{s-2},g_{s-1})}
M_{s-1}(l,i,\eta)\rightarrow 0
\end{eqnarray*}
for all $3\<s\<t$. They satisfy the following:
\begin{enumerate}
\item[(1)] $M_s(l,i,\eta)$ is of $(sm,sm)$-type with ${\rm soc}M_s(l,i,\eta)\cong\oplus_{k=0}^{m-1}sM(l,i+kn)$ and\\
$M_s(l,i,\eta)/{\rm soc}M_s(l,i,\eta)\cong \oplus_{k=0}^{m-1}sM(n-l,i+l+kn)$;
\item[(2)] If $N$ is a submodule of $(s',s')$-type of $M_s(l,i,\eta)$ with $1\<s'<sm$,
then $s'=jm$ and $N={\rm Im}(f_{s-1}\cdots f_j)$ for some $1\<j\<s-1$. Moreover,
$M_s(l,i,\eta)/{\rm Im}(f_{s-1}\cdots f_j)\cong M_{s-j}(l,i,\eta)$ for all $1\<j\<s-1$;
\item[(3)] $\Omega^2M_s(l,i,\eta)\cong M_s(l,i,\eta)$;
\item[(4)]  $M_s(l,i,\eta)\ncong T_{sm}(l',i')$ and $M_s(l,i,\eta)\ncong\ol{T}_{sm}(l',i')$ for any $1\<l'\<n-1$ and $i'\in\mathbb Z$,
\end{enumerate}
for all $1\<s\<t$. Again by \cite[Proposiotns V.1.12 and 5.3]{ARS}, there is a unique module $M_{t+1}(l,i,\eta)$ up to isomorphism such that there is an AR-sequence
\begin{eqnarray*}
0\rightarrow M_t(l,i,\eta)\xrightarrow{\left(
	\begin{matrix}
	g_{t-1} \\
	f_t \\
	\end{matrix}
	\right)
} M_{t-1}(l,i,\eta)\oplus M_{t+1}(l,i,\eta)\xrightarrow{(f'_{t-1}, g_t)} M_t(l,i,\eta)\rightarrow 0.
\end{eqnarray*}

\begin{lemma}\label{4.36}
Retain the above notations.
\begin{enumerate}
\item[(1)] $M_{t+1}(l,i,\eta)$ is an indecomposable module of $((t+1)m,(t+1)m)$-type with ${\rm soc}M_{t+1}(l,i,\eta)\cong \oplus_{k=0}^{m-1}(t+1)M(l,i+kn)$, $M_{t+1}(l,i,\eta)/{\rm soc}M_{t+1}(l,i,\eta)\cong \oplus_{k=0}^{m-1}(t+1)M(n-l,i+l+kn)$.
\item[(2)] If $N$ is an indecomposable submodule of $(s,s)$-type of $M_{t+1}(l,i,\eta)$ with $1\<s<(t+1)m$,
then $s=jm$ and $N={\rm Im}(f_t\cdots f_j)$ for some $1\<j\<t$. Moreover, $M_{t+1}(l,i,\eta)/{\rm Im}(f_t\cdots f_j)\cong M_{t+1-j}(l,i,\eta)$ for all $1\<j\<t$.
\item[(3)] $\O^2M_{t+1}(l,i,\eta)\cong M_{t+1}(l,i,\eta)$.
\item[(4)] $M_{t+1}(l,i,\eta)\ncong T_{(t+1)m}(l',i')$ and $M_{t+1}(l,i,\eta)\ncong\ol{T}_{(t+1)m}(l',i')$ for any $1\<l'\<n-1$ and $i'\in\mathbb Z$.
\end{enumerate}
\end{lemma}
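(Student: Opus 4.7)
The plan is to prove Lemma \ref{4.36} by extending the argument of Lemma \ref{4.35} one step further, treating the inductive hypotheses (1)--(4) on $M_s(l,i,\eta)$ for $1\<s\<t$ as already in hand and feeding them into the same four-part template.

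For assertion (1), I will suppose $M_{t+1}(l,i,\eta)$ decomposes and derive a contradiction. By Lemma \ref{4.14} there would exist a nontrivial indecomposable summand $N$ of $(s,s)$-type with $1\<s<(t+1)m$, and the restriction $g_t|_N : N\rightarrow M_t(l,i,\eta)$ would be irreducible, hence either injective or surjective but not bijective. If $g_t|_N$ is surjective, then $tm<s<(t+1)m$, so $m\nmid s$; Corollary \ref{4.30} and Proposition \ref{4.21} then force $N\cong T_s(l',i')$ or $\ol{T}_s(l',i')$, and the existence of an irreducible map from such an $N$ into $M_t(l,i,\eta)$ would contradict Theorems \ref{4.18}(4) and \ref{4.19}(4) in combination with inductive hypothesis (4). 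If $g_t|_N$ is injective, then $g_t(N)$ is a proper indecomposable submodule of $M_t(l,i,\eta)$ of $(s,s)$-type, so inductive hypothesis (2) forces $s=jm$ and $g_t(N)={\rm Im}(f_{t-1}\cdots f_j)\cong M_j(l,i,\eta)$ for some $1\<j\<t-1$; lifting this and comparing with the image of the right-hand map $(f'_{t-1},g_t)$ of the AR-sequence will yield a decomposition of $M_{t+1}(l,i,\eta)$ that is incompatible with left minimality of the almost split sequence, as in Lemma \ref{4.35}(1). The socle and top descriptions then fall out of the AR-sequence and hypothesis (1) for $M_t$ and $M_{t-1}$.

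For assertion (2), given an indecomposable submodule $N$ of $(s,s)$-type with $1\<s<(t+1)m$, Corollary \ref{4.30}, Proposition \ref{4.21}, and hypothesis (4) together force $m\mid s$, say $s=jm$. I will then intersect $N$ with ${\rm Ker}(g_t)={\rm Im}(f_t\cdots f_1)\cong M_1(l,i,\eta)$ and with ${\rm Im}(f_t)\cong M_t(l,i,\eta)$, invoking Lemma \ref{4.9} to control lengths of intersections, Corollary \ref{4.31} to identify intersections landing in $M_1(l,i,\eta)$, and hypothesis (2) applied to $M_t(l,i,\eta)$ to identify intersections landing in ${\rm Im}(f_t)$. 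The case analysis on $j$ runs exactly as Cases 1 and 2 in Lemma \ref{4.35}(2) and forces $N={\rm Im}(f_t\cdots f_j)$; the quotient identification $M_{t+1}(l,i,\eta)/{\rm Im}(f_t\cdots f_j)\cong M_{t+1-j}(l,i,\eta)$ then follows by a short exact sequence comparison with Lemma \ref{4.33} (telescoped over the inductive hypothesis). For (3) I apply $\Omega^2$ to the defining AR-sequence; since $\Omega^2$ preserves AR-sequences and hypothesis (3) gives $\Omega^2M_t\cong M_t$ and $\Omega^2M_{t-1}\cong M_{t-1}$, uniqueness of the AR-sequence with right-hand term $M_t(l,i,\eta)$ delivers $\Omega^2M_{t+1}(l,i,\eta)\cong M_{t+1}(l,i,\eta)$. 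Assertion (4) is then immediate from (2): by Theorems \ref{4.18}(2) and \ref{4.19}(2), both $T_{(t+1)m}(l',i')$ and $\ol{T}_{(t+1)m}(l',i')$ contain a submodule of $(1,1)$-type, whereas (2) forbids $M_{t+1}(l,i,\eta)$ from having any submodule of $(s,s)$-type with $m\nmid s$.

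The main obstacle will be the injective subcase inside step (1). An indecomposable summand $N$ with $g_t|_N$ injective is exactly the sort of spurious piece that is not ruled out by crude length bookkeeping, and controlling it requires simultaneously using the rigidity coming from hypothesis (2) for $M_t(l,i,\eta)$ (which prescribes precisely which indecomposable submodules can appear) and the left minimality of the AR-sequence to convert a putative splitting into a factorisation incompatible with irreducibility of $f_t$. Once this case is dispatched, assertions (2), (3) and (4) are largely bookkeeping extensions of the arguments already carried out in Lemmas \ref{4.33} and \ref{4.35}.
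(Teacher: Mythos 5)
Your proposal follows essentially the same route as the paper's proof: the same dichotomy on $g_t|_N$ for a putative indecomposable summand $N$ (the epimorphism case ruled out via Corollary \ref{4.30}, Proposition \ref{4.21} and inductive hypothesis (4); the monomorphism case forcing $g_t(N)\subseteq{\rm Im}f_{t-1}$ and hence ${\rm Im}(f'_{t-1},g_t)={\rm Im}f_{t-1}$, which is absurd), the same intersection arguments with ${\rm Ker}g_t$ and ${\rm Im}f_t$ via Lemma \ref{4.9} and Corollary \ref{4.31} for (2), and the same use of $\Omega^2$-invariance of AR-sequences and of Theorems \ref{4.18}(2) and \ref{4.19}(2) for (3) and (4). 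The only slip is terminological: in the monomorphism subcase the contradiction is with exactness (surjectivity) of the right-hand map $(f'_{t-1},g_t)$ rather than with left minimality, but your description of comparing images identifies the correct mechanism.
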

\begin{proof}
(1) If $M_{t+1}(l,i,\eta)$ is decomposable, then by Lemma \ref{4.14},  $M_{t+1}(l,i,\eta)$ has a nontrivial direct summand $N$ of $(s,s)$-type with $1\<s<(t+1)m$, and $g_t|_N: N\ra M_t(l,i,\eta)$ is an irreducible morphism. Hence $g_t|_N$ is a monomorphism or an epimorphism, but $g_t|_N$ is not an isomorphism.  Using the induction hypothesis (4), an argument similar to Lemma \ref{4.35} shows that  $g_t|_N$ is not an epimorphism. Hence $g_t|_N$ is a monomorphism. So $s<tm$ and $N\cong g_t(N)\subset M_t(l,i,\eta)$. Then by the induction hypothesis, $g_t(N)={\rm Im}(f_{t-1}\cdots f_j)\cong M_j(l,i,\eta)$ for some $1\<j\<t-1$, and so $g_t(N)\subseteq{\rm Im}f_{t-1}$
Thus, if $M_{t+1}(l,i,\eta)$ is decomposable, then $g_t(M_{t+1}(l,i,\eta))\subseteq{\rm Im}f_{t-1}$, and
hence ${\rm Im}(f'_{t-1},g_t)={\rm Im}f_{t-1}$, a contradiction. This shows that $M_{t+1}(l,i,\eta)$ is an indecomposable module of $((t+1)m,(t+1)m)$-type. Clearly, ${\rm soc}M_{t+1}(l,i,\eta)\cong\oplus_{k=0}^{m-1}(t+1)M(l,i+kn)$ and $M_{t+1}(l,i,\eta)/{\rm soc}M_{t+1}(l,i,\eta)\cong\oplus_{k=0}^{m-1}(t+1)M(n-l,i+l+kn)$.

(2) Let $N$ be a submodule of $(s,s)$-type of $M_{t+1}(l,i,\eta)$ with $1\<s<(t+1)m$. Then an argument similar to Lemma \ref{4.35} shows that $m|s$.

Let $N$ be a submodule of $(m,m)$-type of $M_{t+1}(l,i,\eta)$. Then similarly to the proof (Case1) of Lemma \ref{4.35}, one can show that $N={\rm Im}(f_t\cdots f_1)\cong M_1(l,i,\eta)$, ${\rm Ker}g_t={\rm Im}(f_t\cdots f_1)$ and $M_{t+1}(l,i,\eta)/{\rm Im}(f_t\cdots f_1)=M_{t+1}(l,i,\eta)/{\rm Ker}g_t\cong M_t(l,i,\eta)$.

Now let $1<j\<t$, and let $N$ be a submodule of $(jm,jm)$-type of $M_{t+1}(l,i,\eta)$. If $N\cap {\rm Ker}g_t=0$, then $j<t$, otherwise $M_{t+1}(l,i,\eta)=N\oplus {\rm Ker}g_t$, a contradiction. Hence $(N\oplus {\rm Ker}g_t)/{\rm Ker}g_t$ is a submodule of $(jm,jm)$-type of $M_{t+1}(l,i,\eta)/{\rm Ker}g_t$. Since $M_{t+1}(l,i,\eta)/{\rm Ker}g_t\cong M_t(l,i,\eta)$,  $M_{t+1}(l,i)/{\rm Ker}g_t$ contains a unique submodule of $(jm,jm)$-type. Since all $f_j$ are injective, ${\rm Im}(f_t\cdots f_{j+1})\cong M_{j+1}(l,i,\eta)$. Hence ${\rm Im}(f_t\cdots f_1)={\rm Ker}g_t$ is the unique submodule of $(m,m)$-type of ${\rm Im}(f_t\cdots f_{j+1})$ by the induction hypothesis and $j+1\<t$, and so
${\rm Im}(f_t\cdots f_{j+1})/{\rm Im}(f_t\cdots f_1)\cong M_{j}(l,i,\eta)$. Thus, ${\rm Im}(f_t\cdots f_{j+1})/{\rm Ker}g_t=(N\oplus{\rm Ker}g_t)/{\rm Ker}g_t$, and so ${\rm Im}(f_t\cdots f_{j+1})=N\oplus{\rm Ker}g_t$, a contradiction. This shows that $L:=N\cap{\rm Ker}g_t\neq 0$. Then $l(L/{\rm soc}L)=l({\rm soc}L)$ by Lemma \ref{4.9}. By Lemma \ref{4.8} and the result shown above, one can check $L={\rm Ker}g_t$, and hence ${\rm Ker}g_t\subseteq N$.
Thus, $N/{\rm Ker}g_t\subseteq M_{t+1}(l,i,\eta)/{\rm Ker}g_t\cong M_t(l,i,\eta)$. Then by Corollary \ref{4.4}, $l({\rm soc}(N/{\rm Ker}g_t))=l((N/{\rm Ker}g_t)/{\rm soc}(N/{\rm Ker}g_t))=(j-1)m$. By Lemma \ref{4.8}, any indecomposable direct summand of $N/{\rm Ker}g_t$ is of $(sm,sm)$-type. However, it follows by the induction hypothesis that there is a unique submodule of $(sm,sm)$-type in $M_{t+1}(l,i,\eta)/{\rm Ker}g_t$ for each $1\<s\<t$, and that the submodule of $(sm,sm)$-type is contained in that of $((s+1)m,(s+1)m)$-type for all $1\<s<t$. Thus, $N/{\rm Ker}g_t$ has to be an indecomposable module of $((j-1)m,(j-1)m)$-type. Then an argument as above shows that $N={\rm Im}(f_t\cdots f_j)$. Moreover, we have
$$\begin{array}{rl}
M_{t+1}(l,i,\eta)/{\rm Im}(f_t\cdots f_j)&\cong(M_{t+1}(l,i,\eta)/{\rm Ker}g_t)/({\rm Im}(f_t\cdots f_j)/{\rm Ker}g_t)\\
&\cong M_t(l,i,\eta)/{\rm Im}(f_{t-1}\cdots f_{j-1})\cong M_{t+1-j}(l,i,\eta).\\
\end{array}$$

(3) By the induction hypothesis, $\O^2M_{t-1}(l,i,\eta)\cong M_{t-1}(l,i,\eta)$ and $\O^2M_t(l,i,\eta)\cong M_t(l,i,\eta)$. Hence an argument similar to $T_2(l,i)$ shows that $\O^2M_{t+1}(l,i,\eta)\cong M_{t+1}(l,i,\eta)$.

(4) By (2) and Theorems \ref{4.18}(2) and \ref{4.19}(2), $M_{t+1}(l,i,\eta)\ncong T_{(t+1)m}(l',i')$ and $M_{t+1}(l,i,\eta)\ncong\ol{T}_{(t+1)m}(l',i')$ for any $1\<l'\<n-1$ and $i'\in\mathbb Z$.
\end{proof}

Summarizing the discussion above, one gets the following theorem.

\begin{theorem}\label{4.37}
For any $\eta\in\Bbbk^{\times}$ and $l, i, t\in \mathbb{Z}$ with $1\<l\<n-1$ and $t\>1$, there is an indecomposable $H_{mn}(\xi)$-module $M_t(l,i,\eta)$ of $(tm,tm)$-type. Moreover,
\begin{enumerate}
\item[(1)] $\O^2M_t(l,i,\eta)\cong M_t(l,i,\eta)$,  ${\rm soc}M_t(l,i,\eta)\cong\oplus_{k=0}^{m-1}tM(l,i+kn)$ and\\ $M_t(l,i,\eta)/{\rm soc}M_t(l,i,\eta)\cong\oplus_{k=0}^{m-1}tM(n-l,i+l+kn)$.
\item[(2)] If $M_t(l,i,\eta)$ contains a submodule of $(s,s)$-type, then $m|s$. Moreover, for any $1\<j<t$, $M_t(l,i,\eta)$ contains a unique submodule of $(jm,jm)$-type, which is isomorphic to $M_j(l,i,\eta)$ and the quotient module of $M_t(l,i,\eta)$ modulo the submodule of $(jm,jm)$-type is isomorphic to $M_{t-j}(l,i,\eta)$.
\item[(3)] For any $1\<j<t$, the unique submodule of $(jm,jm)$-type of $M_t(l,i,\eta)$ is contained in that of $((j+1)m,(j+1)m)$-type.
\item[(4)] $M_t(l,i,\eta)\ncong T_{tm}(l',i')$ and $M_t(l,i,\eta)\ncong\ol{T}_{tm}(l',i')$ for any $1\<l'\<n-1$ and $i'\in\mathbb Z$.
\item[(5)] There are AR-sequences
\begin{eqnarray*}
&0\ra M_1(l,i,\eta)\xrightarrow{f_1}
M_2(l,i,\eta)\xrightarrow{g_1}
M_1(l,i,\eta)\ra 0,\\
&0\ra M_t(l,i,\eta)\xrightarrow{\left(\begin{matrix}
	g_{t-1} \\
	f_t \\
	\end{matrix}\right)}
M_{t-1}(l,i,\eta)\oplus M_{t+1}(l,i,\eta)\xrightarrow{(f'_{t-1}, g_t)}
M_t(l,i,\eta)\ra 0\ (t\>2).
\end{eqnarray*}
\end{enumerate}
\end{theorem}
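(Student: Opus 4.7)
The plan is to read the theorem as a summary statement assembled by induction on $t$, with the construction of the modules $M_t(l,i,\eta)$ and the verification of all listed properties already carried out in the preceding lemmas. So the proof itself is essentially bookkeeping, and the real mathematical content is distributed through Lemmas \ref{4.23}--\ref{4.36}.

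For the base case $t=1$, the module $M_1(l,i,\eta)$ was constructed explicitly (via the basis $\{x_j^k\}$ inside $\oplus_{k=0}^{m-1} P(l,i+kn)$) just before Lemma \ref{4.23}. Properties (1), (3), (4) at $t=1$ follow directly from Lemma \ref{4.23}, Corollary \ref{4.26}, and Lemma \ref{4.23}(4), respectively. Property (2) is vacuous for $t=1$. The first AR-sequence in (5) exists because $\Omega^2 M_1(l,i,\eta) \cong M_1(l,i,\eta)$ (Corollary \ref{4.26}), so in particular $M_1(l,i,\eta)$ is non-projective; the translate $D\mathrm{Tr}\,M_1(l,i,\eta)$ coincides with $\Omega^2 M_1(l,i,\eta)\cong M_1(l,i,\eta)$ in the symmetric-algebra setting, which defines the unique middle term $M_2(l,i,\eta)$, and Lemma \ref{4.32} verifies that this middle term is indecomposable of $(2m,2m)$-type.

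For $t=2$ and $t=3$, the properties are supplied by Lemmas \ref{4.32}, \ref{4.33}, Corollary \ref{4.34}, and Lemma \ref{4.35}. For the general inductive step, assuming $M_1(l,i,\eta),\ldots,M_t(l,i,\eta)$ together with the AR-sequences in (5) have been built and the list of properties (1)--(4) holds for all $s\leq t$, the module $M_{t+1}(l,i,\eta)$ is defined as the unique (up to isomorphism) module completing the AR-sequence starting at $M_t(l,i,\eta)$, using the fact that $\Omega^2 M_t(l,i,\eta)\cong M_t(l,i,\eta)$ from the induction hypothesis. Then Lemma \ref{4.36} establishes items (1)--(4) at the level $t+1$, and the AR-sequence in (5) is precisely the one used to define $M_{t+1}(l,i,\eta)$.

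The only step that is not purely a citation is checking that property (3) also passes to $t+1$, i.e.\ that the unique $(jm,jm)$-submodule of $M_{t+1}(l,i,\eta)$ (constructed in Lemma \ref{4.36}(2) as $\mathrm{Im}(f_t\cdots f_j)$) sits inside the $((j+1)m,(j+1)m)$-submodule $\mathrm{Im}(f_t\cdots f_{j+1})$; but this is immediate from $\mathrm{Im}(f_t\cdots f_j)=f_t\cdots f_{j+1}(\mathrm{Im}\,f_j)\subseteq \mathrm{Im}(f_t\cdots f_{j+1})$. Thus no new obstacle arises, and the theorem follows by reading off the statements from Lemmas \ref{4.23}--\ref{4.36} at each inductive stage.
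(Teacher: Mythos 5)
Your proposal matches the paper's approach exactly: the theorem is stated with the phrase ``Summarizing the discussion above,'' and is indeed assembled by the inductive construction in Lemmas~\ref{4.23}--\ref{4.36}, with $M_1(l,i,\eta)$ built explicitly, $M_{t+1}(l,i,\eta)$ defined via the AR-sequence ending at $M_t(l,i,\eta)$ (using $\Omega^2 M_t(l,i,\eta)\cong M_t(l,i,\eta)$ and the symmetric-algebra identification $D\mathrm{Tr}\cong\Omega^2$), and properties (1)--(4) supplied by Lemma~\ref{4.36} at each stage, with the nesting in (3) being the trivial observation $\mathrm{Im}(f_t\cdots f_j)\subseteq\mathrm{Im}(f_t\cdots f_{j+1})$. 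One tiny imprecision: the first clause of (2) is not vacuous at $t=1$ (it is settled by Corollary~\ref{4.31}, which forces any $(s,s)$-type submodule of $M_1(l,i,\eta)$ to be $M_1(l,i,\eta)$ itself), but this does not affect the validity of the argument.
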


\begin{corollary}\label{4.38}
Let $\eta, \eta'\in\Bbbk^{\times}$ and $l, l', i, i', t, t'\in\mathbb Z$ with $1\<l, l'\<n-1$ and $t, t'\>1$. Then
$M_t(l,i,\eta)\cong M_{t'}(l', i', \eta') \Leftrightarrow t=t', l=l', \eta=\eta' \text{ and }i\equiv i'\ ({\rm mod}\ n).$
\end{corollary}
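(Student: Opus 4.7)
The plan is to prove the two implications separately, and in both directions to reduce the classification to the base case $t=1$ which is supplied by Proposition~\ref{4.27}. For the sufficiency direction ($\Leftarrow$), I will propagate the base isomorphism up the tower $M_1\to M_2\to M_3\to\cdots$ via the AR-sequences in Theorem~\ref{4.37}(5); for the necessity direction ($\Rightarrow$), I will pull an assumed isomorphism back down to $t=1$ using the uniqueness of a distinguished submodule of $M_t(l,i,\eta)$.

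For the sufficiency, suppose $t=t'$, $l=l'$, $\eta=\eta'$, and $i\equiv i'\ ({\rm mod}\ n)$. Proposition~\ref{4.27} gives $M_1(l,i,\eta)\cong M_1(l',i',\eta')$, which settles the case $t=1$. I would then induct on $t$: the AR-sequence from Theorem~\ref{4.37}(5) ending at $M_t(l,i,\eta)$ has middle term $M_{t-1}(l,i,\eta)\oplus M_{t+1}(l,i,\eta)$, and by the uniqueness of AR-sequences (up to isomorphism) with a given right-hand term, this middle term is determined by the iso-class of $M_t(l,i,\eta)$. So an isomorphism $M_t(l,i,\eta)\cong M_t(l',i',\eta')$ combined with the inductive isomorphism $M_{t-1}(l,i,\eta)\cong M_{t-1}(l',i',\eta')$ yields, via the Krull-Schmidt theorem, the required $M_{t+1}(l,i,\eta)\cong M_{t+1}(l',i',\eta')$. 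The base step $t=1\Rightarrow t=2$ follows from the same argument applied to the AR-sequence $0\to M_1(l,i,\eta)\to M_2(l,i,\eta)\to M_1(l,i,\eta)\to 0$, where the middle term $M_2(l,i,\eta)$ is directly determined by $M_1(l,i,\eta)$.

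For the necessity, suppose $M_t(l,i,\eta)\cong M_{t'}(l',i',\eta')$. By Theorem~\ref{4.37}(1), the two modules are of $(tm,tm)$- and $(t'm,t'm)$-type respectively, so comparing lengths gives $tm=t'm$ and hence $t=t'$. Next, by Theorem~\ref{4.37}(2), each of the two modules contains a unique submodule of $(m,m)$-type, isomorphic to $M_1(l,i,\eta)$ and $M_1(l',i',\eta')$ respectively. Because the submodule of $(m,m)$-type is unique on each side, any isomorphism between the ambient modules must carry one onto the other, giving $M_1(l,i,\eta)\cong M_1(l',i',\eta')$; Proposition~\ref{4.27} then delivers $l=l'$, $\eta=\eta'$, and $i\equiv i'\ ({\rm mod}\ n)$.

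The main obstacle is the inductive step in the sufficiency direction, which rests on the fact that the middle term of an AR-sequence is determined up to isomorphism by either of its endpoints. This is a standard property of Auslander-Reiten sequences (see \cite{ARS}), and it applies here because the modules $M_t(l,i,\eta)$ are neither projective nor injective: indeed $\Omega^2 M_t(l,i,\eta)\cong M_t(l,i,\eta)$ by Theorem~\ref{4.37}(1) shows $\Omega M_t(l,i,\eta)$ is nonzero, while the dual identity rules out injectivity. With this in hand, the Krull-Schmidt cancellation that drives the induction is immediate.
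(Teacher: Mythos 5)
Your proof is correct and follows essentially the same line as the paper's (which cites Proposition 4.27, Theorem 4.37(2,5), and the length formula $l(M_t(l,i,\eta))=2tm$): the length comparison pins down $t=t'$, the uniqueness of the $(m,m)$-type submodule from Theorem 4.37(2) reduces the forward direction to the $t=1$ case handled by Proposition 4.27, and the uniqueness of AR-sequences from Theorem 4.37(5) together with Krull–Schmidt cancellation propagates isomorphisms up the tower for the converse.
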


\begin{proof}
It follows from Proposition \ref{4.27}, Theorem \ref{4.37}(2,5), $l(M_t(l,i,\eta))=2tm$ and $l(M_{t'}(l', i', \eta'))=2t'm$.
\end{proof}

\begin{proposition}\label{4.39}
Let $M$ be an indecomposable $H_{mn}(\xi)$-module of $(t,t)$-type. If $M$ contains a submodule isomorphic to $M_1(l,i,\eta)$ for some $1\<l\<n-1$, $i\in\mathbb Z$ and $\eta\in\Bbbk^{\times}$, then $t=sm$ and $M\cong M_s(l,i, \eta)$ for some $s\>1$.
\end{proposition}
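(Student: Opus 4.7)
The proof splits into two stages. Stage one is a divisibility argument. Since $M_1(l,i,\eta)\subseteq M$ has length $2m$ (Lemma~\ref{4.23}(2)), we have $t\>m$. If $m\nmid t$, then Corollary~\ref{4.30} would produce in $M$ a submodule of $(1,1)$-type, so Proposition~\ref{4.21} would force $M\cong T_t(l',i')$ or $M\cong\ol T_t(l',i')$ for some $1\<l'\<n-1$ and $i'\in\mathbb Z$. Because $t>m$ in this case, Theorem~\ref{4.18}(2) or Theorem~\ref{4.19}(2) furnishes a unique submodule of $(m,m)$-type in $M$, namely $T_m(l',i')$ or $\ol T_m(l',i')$; since $M_1(l,i,\eta)\subseteq M$ is also of $(m,m)$-type, the two submodules must coincide, contradicting Lemma~\ref{4.23}(4). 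Hence $t=sm$ for some $s\>1$.

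Stage two is induction on $s$. The base case $s=1$ follows from matching lengths: $l(M)=2m=l(M_1(l,i,\eta))$ together with $M_1(l,i,\eta)\subseteq M$ forces $M=M_1(l,i,\eta)$. For the inductive step, let $s\>2$ and assume the result for all smaller values. Denote the given submodule by $N\cong M_1(l,i,\eta)$. The inclusion $\iota:N\hookrightarrow M$ is not a split monomorphism ($M$ being indecomposable of strictly larger length), so by the almost split sequence $0\ra M_1(l,i,\eta)\xrightarrow{f_1}M_2(l,i,\eta)\xrightarrow{g_1}M_1(l,i,\eta)\ra 0$ of Theorem~\ref{4.37}(5), it factors as $\iota=h\circ f_1$ for some $h:M_2(l,i,\eta)\to M$. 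Using Lemmas~\ref{4.9} and \ref{4.33}, together with Corollary~\ref{4.4}(2) and the absence of $(1,1)$-type submodules in $M_r(l,i,\eta)$ (Lemma~\ref{4.23}(3)), one verifies that $h$ must be injective, so $M$ contains a submodule isomorphic to $M_2(l,i,\eta)$. Iterating this argument with the almost split sequences of Theorem~\ref{4.37}(5), and recognizing at each stage the unique submodule of the right $(rm,rm)$-type via Theorem~\ref{4.37}(2), one builds an ascending chain of submodules of $M$ isomorphic to $M_1(l,i,\eta)\subseteq M_2(l,i,\eta)\subseteq\cdots\subseteq M_{s-1}(l,i,\eta)$. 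Finally, applying the almost split sequence
$$0\ra M_{s-1}(l,i,\eta)\xrightarrow{\left(\begin{matrix} g_{s-2}\\ f_{s-1}\end{matrix}\right)} M_{s-2}(l,i,\eta)\oplus M_s(l,i,\eta)\xrightarrow{(f'_{s-2},\,g_{s-1})} M_{s-1}(l,i,\eta)\ra 0$$
to the non-split inclusion of this $M_{s-1}(l,i,\eta)$ into $M$, and comparing lengths ($l(M)=2sm=l(M_s(l,i,\eta))$), we obtain the desired isomorphism $M\cong M_s(l,i,\eta)$.

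The principal technical obstacle lies in the repeated verification that each factoring morphism $M_r(l,i,\eta)\to M$ produced by the AR-property is injective (so that its image is a genuine copy of $M_r(l,i,\eta)$ inside $M$), and, at the final step, in ruling out that the factored map lands entirely in the $M_{s-2}(l,i,\eta)$-summand of the middle term. Both points are handled by the same kernel/image bookkeeping: any proper kernel would, via Lemma~\ref{4.9} and Corollary~\ref{4.31}, yield either a submodule structure incompatible with the uniqueness statements of Theorem~\ref{4.37}(2), or a nontrivial direct summand in $M$ contradicting its indecomposability. The overall scheme parallels the proof of \cite[Theorem 4.16]{Ch3}.
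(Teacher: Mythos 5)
Your proof is correct and follows essentially the same two-step route as the paper: first ruling out $(1,1)$-type submodules via Proposition~\ref{4.21}, Theorems~\ref{4.18}(2), \ref{4.19}(2) and Lemma~\ref{4.23}(4) to obtain $m\mid t$ from Corollary~\ref{4.30}, and then an iterated Auslander--Reiten-sequence argument (the paper invokes this as ``an argument similar to \cite[Theorem 4.16]{Ch3}''). Your fleshing-out of the second step is sound, with one small citation imprecision: the absence of $(1,1)$-type submodules in $M_r(l,i,\eta)$ for $r>1$ comes from Theorem~\ref{4.37}(2), not from Lemma~\ref{4.23}(3), which covers only the case $r=1$.
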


\begin{proof}
Assume that $M$ contains a submodule isomorphic to $M_1(l,i,\eta)$ for some $1\<l\<n-1$, $i\in\mathbb Z$ and $\eta\in\Bbbk^{\times}$. Then $t\>m$. By Proposition \ref{4.21}, Lemma \ref{4.23}(4), and Theorems \ref{4.18}(2,3) and \ref{4.19}(2,3), one can check that $M$ does not contain any submodule of $(1,1)$-type. Hence $m|t$ by Corollary \ref{4.30},
and so $t=sm$ for some $s\>1$. Then an argument similar to \cite[Theorem 4.16]{Ch3} shows that $M\cong M_t(l,i,\eta)$.
\end{proof}

Summarizing the discussions in the last section and this section, one gets the classification of finite dimensional indecomposable $H_{mn}(\xi)$-modules as follows.

\begin{corollary}\label{4.40}
A representative set of isomorphism classes of finite dimensional indecomposable $H_{mn}(\xi)$-modules is given by
$$\left\{\begin{array}{c}
M(l,i), Z(\l_{i'j'}), P(l,i), \Omega^{\pm s}M(l,i),\\
T_s(l,i), \ol{T}_s(l,i), M_s(l,j,\eta)\\
\end{array}\left|\begin{array}{c}
1\<l\<n-1, i\in\mathbb{Z}_{mn}, s\>1,\\
j\in\mathbb{Z}_n, (i',j')\in I_0, \eta\in\Bbbk^{\times}\\
\end{array}\right.\right\}.$$
\end{corollary}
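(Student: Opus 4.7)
The proof is essentially a bookkeeping exercise that pulls together the structural results of Sections 3 and 4, stratifying indecomposables by Loewy length and then by $(s,t)$-type in the Loewy length $2$ case. The plan is to start from an arbitrary finite dimensional indecomposable $H_{mn}(\xi)$-module $M$ and trace down the classification tree that has been built up in the preceding lemmas.

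First I would invoke Lemma \ref{3.5}(3) to reduce to ${\rm rl}(M)\in\{1,2,3\}$. The extreme cases are immediate from Lemma \ref{3.12}: if ${\rm rl}(M)=1$ then $M\cong M(l,i)$ or $M\cong Z(\lambda_{i'j'})$, and if ${\rm rl}(M)=3$ then $M\cong P(l,i)$. The bulk of the argument is therefore the case ${\rm rl}(M)=2$, where Corollary \ref{4.6} says $M$ is either of $(s,s)$-type for some $s\>1$ or else $M\cong \Omega^{\pm s}M(l,i)$ for some $1\<l\<n-1$, $i\in\mathbb Z_{mn}$ and $s\>1$. This second alternative accounts for the $\Omega^{\pm s}M(l,i)$ entries in the list.

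For the $(s,s)$-type case I would apply Lemma \ref{4.29}, which dichotomises such an $M$ into the one containing a submodule of $(1,1)$-type and the one containing a submodule isomorphic to $M_1(l,i,\eta)$ for some parameters. In the first branch, Proposition \ref{4.21} gives $M\cong T_s(l,i)$ or $M\cong\ol{T}_s(l,i)$. In the second branch, Proposition \ref{4.39} forces $s=tm$ for some $t\>1$ and $M\cong M_t(l,i,\eta)$. This exhausts all possibilities and shows that the listed set is exhaustive.

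It remains to verify that no two modules in the list are isomorphic. Different Loewy lengths separate the three blocks $\{M(l,i),Z(\lambda_{i'j'})\}$, $\{\Omega^{\pm s}M(l,i),T_s,\ol{T}_s,M_s\}$, $\{P(l,i)\}$. Within Loewy length $1$ the separation is by Lemmas \ref{3.1} and \ref{3.2}; within Loewy length $3$ it is by the shape of the socle together with Proposition \ref{3.3}. Within Loewy length $2$, the $(s,t)$-types distinguish $\Omega^{\pm s}M(l,i)$ (which are of $(s,s{+}1)$- or $(s{+}1,s)$-type by Lemma \ref{4.3}) from the symmetric-type modules, while among symmetric-type modules Corollary \ref{4.20} handles the $T_s$ versus $\ol{T}_s$ distinction, Theorem \ref{4.37}(4) and Corollary \ref{4.38} handle $M_s$ versus $T_{sm},\ol{T}_{sm}$, and Corollary \ref{4.38} handles the $M_s$ among themselves. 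The only subtle point, and the one place where I would have to be most careful, is confirming that the parameter ranges ($i\in\mathbb Z_{mn}$ for $M(l,i)$, $T_s$, $\ol{T}_s$ versus $j\in\mathbb Z_n$ for $M_s(l,j,\eta)$) really are canonical forms; this follows from Lemma \ref{3.2}(2), Corollary \ref{4.20} and Proposition \ref{4.27} respectively. With these observations assembled, the corollary is immediate.
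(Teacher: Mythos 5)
Your proposal is correct and follows exactly the route the paper intends: the paper's ``proof'' is just the one-line remark that the corollary summarizes the preceding discussion, and your assembly via Lemma \ref{3.5}(3), Lemma \ref{3.12}, Corollary \ref{4.6}, Lemma \ref{4.29}, Propositions \ref{4.21} and \ref{4.39}, together with the cited non-isomorphism statements, is precisely the intended argument.
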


By Corollary \ref{4.40}, we also have the following corollary.

\begin{corollary}
$H_{mn}(\xi)$ is of tame representation type.
\end{corollary}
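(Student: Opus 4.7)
The plan is to deduce tameness directly from the classification given in Corollary \ref{4.40}. Recall that a finite dimensional algebra $A$ is of tame representation type if for every $d\>1$ there exist finitely many $A$-$\Bbbk[t]$-bimodules $N_1,\ldots,N_r$, each free of finite rank as a right $\Bbbk[t]$-module, such that all but finitely many isomorphism classes of $d$-dimensional indecomposable left $A$-modules are of the form $N_j\otimes_{\Bbbk[t]}\Bbbk[t]/(t-\lambda)$ for some $j$ and some $\lambda\in\Bbbk$. So I first fix a dimension $d$ and scan through the seven families in Corollary \ref{4.40} to see which can contribute infinitely many isoclasses of dimension $d$. The simples $M(l,i)$ and $Z(\l_{i'j'})$ are of dimensions $\<n$, the projectives $P(l,i)$ of dimension $2n$, and for each $s\>1$ the modules $\Omega^{\pm s}M(l,i)$, $T_s(l,i)$, $\ol{T}_s(l,i)$ have dimensions depending only on the discrete parameters $s,l,i$ (with $i$ ranging over the finite set $\mathbb{Z}_{mn}$). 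Hence these six families together contribute only finitely many isomorphism classes in each fixed dimension $d$.

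The remaining family consists of the modules $M_s(l,j,\eta)$ with $\eta\in\Bbbk^\times$; each such module has dimension $2sm$, and by Corollary \ref{4.38} the parameters $(s,l,j,\eta)$ are determined up to $j\bmod n$ by the isomorphism class. Thus, for a given $d$, there are only finitely many triples $(s,l,j)$ with $2sm=d$ and $1\<l\<n-1$, $j\in\mathbb{Z}_n$, and for each such triple the modules $M_s(l,j,\eta)$ form a genuine one-parameter family indexed by $\eta\in\Bbbk^\times$. So it remains to exhibit, for each fixed $(s,l,j)$, a single $H_{mn}(\xi)$-$\Bbbk[t]$-bimodule $N_{s,l,j}$ of rank $2sm$ over $\Bbbk[t]$ whose specializations at $t=\eta\in\Bbbk^\times$ recover $M_s(l,j,\eta)$ (the finitely many $\eta=0$ specializations can be absorbed into the finite exceptional set).

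For $s=1$ this is immediate from the explicit construction preceding Lemma \ref{4.23}: the formulas \equref{e2}-\equref{e5} for the action of $a,b,c,d$ on the basis $\{x_j^k\}$ depend polynomially on the single scalar $\eta$, so the free $\Bbbk[t]$-module $\bigoplus_{j,k}\Bbbk[t]x_j^k$ with the same formulas (with $\eta$ replaced by $t$) is an $H_{mn}(\xi)$-$\Bbbk[t]$-bimodule $N_{1,l,j}$, and specializing $t\mapsto\eta$ recovers $M_1(l,j,\eta)$. For $s>1$, the modules $M_s(l,j,\eta)$ are built iteratively from $M_1(l,j,\eta)$ via the Auslander--Reiten sequences in Theorem \ref{4.37}(5), so one can construct $N_{s,l,j}$ inductively by taking appropriate extensions of $N_{s-1,l,j}$ and $N_{1,l,j}$ over $\Bbbk[t]$ (equivalently, by writing down explicit block-matrix formulas using the standard bases of the projective covers $\oplus_k sP(l,j+kn)$, in the style of the proof of Lemma \ref{4.29}).

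The only genuine obstacle is verifying that the inductively constructed bimodule $N_{s,l,j}$ really specializes to $M_s(l,j,\eta)$ at every $t=\eta\in\Bbbk^\times$; for $s=1$ this is transparent, and for $s\>2$ it follows because the specialization at a generic $\eta$ is still indecomposable of $(sm,sm)$-type containing $M_1(l,j,\eta)$ as a submodule, so Proposition \ref{4.39} identifies it with $M_s(l,j,\eta)$. Combining these families $\{N_{s,l,j}\}$ with the finitely many isolated indecomposables in dimension $d$ yields the required bound of finitely many one-parameter families, and hence $H_{mn}(\xi)$ is tame.
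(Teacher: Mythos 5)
Your proof fills in, correctly and in the standard way, what the paper leaves implicit (the paper's own proof is essentially the one-line observation ``by Corollary \ref{4.40}''): the classification exhibits six discrete families contributing finitely many isoclasses per dimension, plus the single one-parameter family $M_s(l,j,\eta)$, $\eta\in\Bbbk^\times$, which is exactly the shape required for tameness. One small slip: $\dim M_s(l,j,\eta)=smn$, not $2sm$; you have written the composition length $l({\rm soc})+l({\rm top})=2sm$ where the dimension should appear. This does not affect the argument, since the relevant point — that the dimension depends only on $s$ (once $m,n$ are fixed), so that only finitely many triples $(s,l,j)$ contribute in each fixed dimension $d$ — still holds. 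The $s=1$ bimodule $N_{1,l,j}$ is genuinely immediate from \equref{e2}--\equref{e5}, as you say; for $s>1$ your inductive/block-matrix sketch is plausible but not fully carried out, and the generic-indecomposability claim you invoke (to apply Proposition \ref{4.39} at the specialization) would, if one wanted full rigor, need the standard upper-semicontinuity argument to show decomposability occurs only on a proper closed (hence finite) subset of $\Bbbk^\times$. Since those finitely many $\eta$ can be absorbed into the exceptional set in the definition of tameness, the argument goes through.
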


\centerline{ACKNOWLEDGMENTS}

This work is supported by NNSF of China (Nos. 12201545, 12071412).\\

\end{document}